\documentclass[11pt,a4paper,reqno]{amsart}
\usepackage{amsmath,amssymb,amsbsy,amsthm,dsfont}
\usepackage{graphicx,a4wide}
\usepackage{multirow}
\usepackage[utf8]{inputenc}
\usepackage[T1]{fontenc}
\usepackage{float}
\usepackage{natbib}
\usepackage{url}
\usepackage{ulem}

\usepackage[urlcolor=red, linkcolor=blue, colorlinks=true]{hyperref}
\usepackage{xcolor}
\newtheorem{theorem}{Theorem}[section]
\newtheorem{lemma}[theorem]{Lemma}
\newtheorem{remark}{Remark}

\newcommand{\m}{\boldsymbol{m}}

\definecolor{salmon}{rgb}{1.0, 0.6, 0.6}

\newcounter{hypA}
\newcounter{subhypA}[hypA]

\renewcommand{\thehypA}{A\arabic{hypA}}

\newenvironment{hypA}{
  \refstepcounter{hypA}
  \begin{itemize}
  \item[({\bf \thehypA})]
}{
  \end{itemize}
}

\newenvironment{subhypA}{
  \refstepcounter{subhypA}
  \begin{itemize}
  \item[({\bf \arabic{subhypA})}] % ← affichage simple
}{
  \end{itemize}
}

\definecolor{salmon}{rgb}{1.0, 0.6, 0.6}

\begin{document}

\title[Minimization of an unknown function in a multivariate regression model]{Optimal minimization of an unknown function in a nonparametric multivariate regression model thanks to a dimension reduction approach}
\author{Cédric Adam}
\address{Andra, 1/7 Rue Jean Monnet, 92290 Châtenay-Malabry, France and Université Paris Cité and Sorbonne Université, CNRS, Laboratoire de Probabilités, Statistique et Modélisation, F-75013 Paris, France}
\author{Ilaria Giulini and Céline Lévy-Leduc}
\address{Université Paris Cité and Sorbonne Université, CNRS, Laboratoire de Probabilités, Statistique et Modélisation, F-75013 Paris, France}
\date{\today}

\begin{abstract}
\textcolor{black}{In this paper, we propose a novel approach for estimating the minimum of a smooth function and its location from observations corresponding to a multivariate regression function depending a priori on $d$ variables but actually only on $r<d$ \textit{active} variables and corrupted by some additional noise. Our method consists of two steps: The first one is a variable selection approach which is used for identifying the $r$ active variables on which $f$ depends and the second one consists in estimating the minimum of the function and its location. The estimation of the minimizers is obtained by using a projected gradient descent where the gradient is estimated using a local polynomial approximation of the regression function limited to its active variables obtained in the first step. The estimation of the minimum is obtained by evaluating the estimator of the regression function using a local polynomial approach at the estimator of one of the minimizers previously obtained. We establish non asymptotic upper bounds for the quadratic risk of the  estimators of the minimizers and of the minimum and prove that they reach the optimal rate that could be expected as if the active variables were known beforehand up to a factor smaller than a power of a logarithmic term.}
\end{abstract}

\keywords{multivariate regression model, variable selection, minimization of a function}

\maketitle

% {\color{magenta}
% \begin{enumerate}
% \item quelle notation $\{ x , \text{blabla}\}$ vs $\{ x ; \text{blabla}\}$ ?
% \end{enumerate}
% }

% \textcolor{black}{ATTENTION LES HYPOTHESES ONT CHANGÉ DE NUMERO !!!!!!}

\section{Introduction}

\textcolor{black}{Let us consider the following nonparametric multivariate regression model:
\begin{equation}\label{eq: regression model}
 \forall i \in \{1,...,n\}, \hspace{1ex} y_i = f(x_i) + \xi_i=f(x_i^{(1)},\dots,x_i^{(d)})+\xi_i,
 \end{equation} 
 where %$n\geq 3$,
 $f : \mathbb{R}^d \longrightarrow \mathbb{R}$ is an unknown function, $x_1,\dots,x_n$ are independent identically distributed (i.i.d.) random variables with an unknown density $p$ and $\xi_1,\dots,\xi_n$ are centered random variables independent from $x_1,\dots,x_n$. Estimating the minimum of the function $f$ and its location from noisy observations $y_1,\dots,y_n$ in the previous model is a problem which has received a lot of attention.}

% From \cite{Cauchy:1847} who introduced the gradient descent algorithm, the problem of estimating a minimizer and the minimum of a function has been a very active research field. This is especilly due to its enormous possible applications. We can cite several well known general optimization algorithms as the Accelerated Gradient Descent (AGD) introduced in \cite{Nesterov} which increases the speed of the original gradient descent by adding an inertial term or the Stochastic Gradient Descent which is adapted for a sum of functions and was first used in \cite{SGD}.

% as Broyden–Fletcher–Goldfarb–Shanno (BFGS) an algorithm introduced in {\color{purple} ref} which uses an estimation of the second derivative and is still used in practice.

\textcolor{black}{Two different types of design are considered in the literature: the active design where the statistician chooses the value of $x_1,\dots,x_n$ at which $f$ is computed and the passive one where the $x_1,\dots,x_n$ are sampled independently from some probability distribution.}

%\textcolor{blue}{je ne pense pas que ce soit une bonne idée de distinguer le offline et le online dans active et passive design parce que dans l'active design cela représente à peine un minuscule paragraphe et dans le passive on ne dit rien donc j'ai enlevé cette distinction.}

%The minimization in a case of a regression model has been studied a lot recently especially because it can be applied in many concrete data driven approaches. The litterature consider mainly two different designs : the active design which is when the statistician makes a choice for the $x_i$ and the passive one where the $x_i$ are random. The design can also be online (data coming one after the other) and offline (all of them being available directly).

\textcolor{black}{The problem of estimating the minimizer of a regression function in an active or sequential design has been studied since the 50s. Inspired by the algorithm for root-finding devised by \cite{RobbinsMonro}, \cite{KW} introduced the KW method which aims at estimating the minimizer of a univariate regression function. Their idea consists in approximating the gradient of the function by the central difference method and then {\color{black} in using} a gradient descent generally attributed to \cite{Cauchy:1847}. This algorithm was extended to the multivariate case by \cite{Blum1954} and \cite{Dupac1957} established an upper bound of order $n^{-2/3}$ for the quadratic risk of the estimation of the minimizer of a regression function in the {\color{black} univariate} case when it is assumed to be strongly convex and $\beta$-Hölder with $\beta=3$. Then, by refining the classical central finite-difference estimator of the gradient, \cite{Fabian1967} proved that his novel approach reached a quadratic risk of order $n^{-(\beta-1)/\beta}$ for odd integers $\beta \geq 3$ and functions having bounded $\beta$th partial derivatives. This rate was established to be optimal by \cite{polyak_tsybakov:1990}.}

\textcolor{black}{The estimation of the minimum value in the active design framework has also been widely studied. %In the litterature, this convergence rate appears in several works. For instance, 
  \cite{MokkademPelletier2007} proposed a more general algorithm than the ones proposed by Kiefer, Wolfowitz and Blum %by adding a \textit{companion} which approximates the size of the maximum of the regression function
  and proved its asymptotic normality with a $\sqrt n$ scaling, which was proved to be optimal when the {\color{black} noise} is Gaussian.
  % if $f$ satisfies some local assumptions.
  %It is worth noting that the rate $1/\sqrt{n}$ cannot be improved under Gaussian noise as even an oracle with knowledge of the true minimizer $x^*$ cannot converge faster.
  Over the class of strongly convex and $\beta$-Hölder functions, \cite{Belister2012} devised a new procedure which achieves the $1/\sqrt{n}$ optimal rate for $\beta>2$. Ten years later, \cite{Akhavan2022} proposed a more {\color{black} computationally} feasible estimator achieving the same convergence rate.
  Therefore, for the estimation of both the minimum and the minimizer of a regression function in the active design framework, the
  optimal convergence rates do not depend on {\color{black} the dimension} $d$ and hence do not suffer from the curse of {\color{black} dimensionality}.}
  %obtained bounds under an active online design do not depend on the dimension $d$, so these methods do not suffer from the curse of dimensionality.

% \textcolor{black}{As far as the active offline design is concerned, it corresponds to the issue of design of experiments which is a field of research on its own and is for instance explained in the reference book of \cite{montgomery}.}
% Note that the case of an active offline design corresponds to the theory of Optimal Experimental Design (OED) which results in a fixed design and, thus, is outside the scope of the present work. However, it has been an active area of research since the pioneering work of \cite{OED}.

\textcolor{black}{Since many practical situations involve passive designs, we will focus on this framework in the rest of the paper.
%As many practical applications involve offline designs, we consider the case of an i.i.d. passive offline design in this work.
The estimation of the minimizer in this case was first studied in \cite{HN87}.} They proposed an estimator that uses a kernel-based local estimator involving the first and second derivatives of the kernel to replace the unknown gradient and performs a Robbins-Monro-type update to estimate the root. They proved that, under suitable assumptions, it converges almost surely and in the quadratic mean with a $n^{(1-\gamma)/2}$ rate, for a parameter $1/5 < \gamma < 1/2$. \textcolor{black}{Later, \cite{Tsy90} proved that the minimax optimal rate for estimating the minimizer is 
\begin{equation}\label{rate tsybakov}
n^{\frac{-2(\beta-1)}{2\beta+d}}
\end{equation}
for the class of strongly convex $\beta$-Hölder ($\beta \geq 2$) regression functions.
%We recall that in problems of estimating a deterministic parameter from some observations, an estimator is called minimax if its maximal risk is minimal among all estimators. 
Using a Taylor polynomial approximation, \cite{Tsy90} also proposed an algorithm reaching this optimal rate. However, the marginal density of the design points needs to be known to define this estimator. This {\color{black} requirement} was removed by \cite{tsybakov} who devised an estimator that still reaches the optimal rate (up to a logarithmic factor) without knowing {\color{black} the density}.}

\textcolor{black}{The {\color{black} literature} about the estimation of the minimum value under i.i.d. passive design is relatively scarce. However, \cite{Ibragimov1982} or \cite{Lepskii1994} studied the problem of estimating the maximum of a univariate function observed in the Gaussian white noise model and \cite{tsybakov}
% proved the conjecture of \cite{Belitser2021} and
established that the minimax rate for estimating the minimum value of a regression function is of order 
\begin{equation}\label{rate tsybakovbis}
n^{-\beta/(2\beta+d)}
\end{equation}
over the class of strongly convex and $\beta$-Hölder functions with $\beta\geq 2$. Moreover, they {\color{black} proposed} a recursive algorithm achieving this optimal rate up to a logarithmic factor which does not require the knowledge of the density.}

\textcolor{black}{Thus, when the number of variables $d$ on which $f$ a priori depends is large, the convergence rates \eqref{rate tsybakov} and \eqref{rate tsybakovbis} are considerably altered. However,
%The dimension, called $d$ here, is often very large and one can notice it alters a lot the convergence rates of these methods.
%Nevertheless
in many applications, the function $f$ actually depends only on a few unknown variables among $x^{(1)},\dots,x^{(d)}$ usually called \textit{active} variables. This is the reason why we will assume in the following that there exists a function $g : \mathbb{R}^r\longrightarrow \mathbb{R}$, with $r<d$ such that
\begin{equation}\label{def : variables selection}
\forall x= (x^{(1)},...,x^{(d)}) \in \mathbb{R}^d, \hspace{1ex} f(x) = g(x^{(j_1)},...,x^{(j_r)}), \text{ with }
j_1 < j_2 < ... < j_r.
\end{equation}
This can be rewritten as follows:
\begin{equation} \label{def: g}
\forall x \in \mathbb{R}^d, f(x) = g(Wx),
\end{equation}
for some matrix $W$ of size $r\times d$ satisfying $W W^\top = I_r$, where $\top$ denotes the matrix transposition and $I_r$ is the identity matrix of size $r \times r $. Here, $W$ has exactly one 1 in each row and the other entries are equal to zero.}
% More precisely, we suppose that there exists a subset of indices $J = \{j_1,...,j_r\} \subset \{1,...,d\}$ such that  
% \begin{equation}\label{def : variables selection}
% \forall x= (x_1,...,x_d)^\top \in \mathbb{R}^d, \hspace{1ex} f(x) = g(x_{j_1},...,x_{j_r}), \text{ with }
% j_1 < j_2 < ... < j_r.
% \end{equation}

\textcolor{black}{In the sequel, we will focus on the problem of estimating the minimum of $f$ on a compact convex subset $\Theta$ of $\mathbb{R}^d$ as well as its location from $y_1,\dots,y_n$ and $x_1,\dots,x_n$ defined in (\ref{eq: regression model}) where $f$ is defined in (\ref{def: g}) without knowing neither $g$ nor $r$.}

\textcolor{black}{Finding $J = \{j_1,...,j_r\} \subset \{1,...,d\}$ will allow us to identify the relevant variables on which $f$ depends and thus $r$. It corresponds to the variable selection issue which has been extensively studied using regularized approaches since the end of the 90s in many different frameworks.
Efficient methodologies have been devised when the variables $x_1,\dots,x_n$ and $y_1,\dots,y_n$ in (\ref{eq: regression model}) are linearly related. Notable examples include the Lasso regression formulated by \cite{Tib96} which consists in penalizing the least-squares criterion by the $\ell_1$ norm of the vector to estimate and one of its variant, the Elastic Net, defined by \cite{Elastic_net} mixing in the penalization the $\ell_1$ and $\ell_2$ norms of the vector to estimate. When the relationship between {\color{black} $x_1,\dots, x_n$ and $y_1,\dots, y_n$} is non linear, the variable selection problem is much more challenging. 
For instance, \cite{HSIC} introduced a feature-wise kernelized Lasso method tailored for variable selection in \textcolor{black}{nonlinear} models called HSIC-Lasso and its variation NOCCO-Lasso. This approach uses the kernel trick within the regular Lasso problem.
Tree-based methods such as Random Forests introduced by \cite{breiman2001random} are also widely used for variable selection in regression models.
More recently, Artificial Neural Networks (ANN) have gained interest for variable selection and estimation in regression models.
We will present just a few examples of them. 
Regularized approaches with different ANN architectures were proposed in the work of \cite{li2016deep} where a ridge regularization approach is considered for the weights of the first and hidden layers. Similarly, \cite{feng2017sparse} introduced the SPINN method which is a single-layer neural network with a sparse group {\color{black} Lasso} regularization.
% They later proposed another approach with deeper neural networks in \cite{feng2022ensembled}. \cite{ye2018variable} adapted the SPINN method by adding a greedy elimination algorithm to iteratively remove one variable at a time and determine if the empirical loss decreases.
Later, \cite{lemhadri2021lassonet} proposed LassoNet, a residual feed-forward neural network architecture introduced in \cite{He2016residual} which incorporates a regularization approach based on the Lasso criterion. 
Another research direction focuses on developing methods using splines for piecewise polynomial fitting such as Multivariate Adaptive Regression Splines (MARS) introduced by \cite{friedman1991multivariate}. In the same way, \cite{lin2006component} developed COSSO, a regularization method for component selection and smoothing splines where the penalty term is the sum of the component norms.
A few articles have also proposed considering a local polynomial approximation which has been successfully used for performing variable selection in \cite{J} and has exhibited optimal performance in the minimization of a strongly convex and Hölder regression function as shown in \cite{tsybakov}.
}

\textcolor{black}{This is the reason why we shall prove in this paper that combining an adaptation of \cite{tsybakov} with the variable selection approach of \cite{J} leads to the optimal rate that one could expect for the estimation of the minimum of $f$ as well as its location as if the number of active variables $r$ was known beforehand.}

\textcolor{black}{This paper is organized as follows. Section \ref{section: method} describes our approach. Its theoretical properties are established in Section \ref{section: theoretical results} and the detailed proofs are given in Section \ref{sec:proofs} using the technical lemmas of Section \ref{section: technical lemmas} and the additional results given and proved in the Appendix.}

\section{Description of the method}\label{section: method}
The method we propose consists of two steps: we first estimate the \textcolor{black}{set of active variables, also called active subspace,} with the method given in \cite{J} and then we develop an optimization algorithm inspired by the one introduced in \cite{tsybakov} to estimate the \textcolor{black}{minimizers} and the minimum of {\color{black} the function} $f$. %{\color{magenta} [ajouter une phrase pour souligner qu'on utilise deux fois un "local polynomial approach"?]}
\subsection{Estimation of the active subspace}\label{section:variable selection}

We first explain how to estimate {\color{black} $J=\{ j_1,  j_2, \dots, j_r\}$}, the subset of indices defined in \eqref{def : variables selection} using the method proposed in \cite{J}. %{\color{teal}[NB $J$ est d\'efini bien apr\`es \eqref{def : variables selection}]}

\textcolor{black}{Let $x_0$ be a given point of $\mathbb{R}^d$ satisfying (\ref{ass: density}) and (\ref{ass: distinguishable properties}) given hereafter}
%\textcolor{black}{Let $x_0$ be a given point of $\Theta'$ defined as follows:
%\begin{equation}\label{eq:Theta_prime}
%  \Theta' = \{x+y; x\in \Theta, \Vert y \Vert \leq 1\},
%\end{equation}
%}
 and denote $\partial_k f(x_0)$ the partial derivative of $f$ with respect to its $k$th coordinate computed at $x_0$. \textcolor{black}{If $f$ is assumed to be smooth enough, typically $f$ belongs to $\mathcal{F}_\beta(L)$, {\color{black} the class of $\beta$-H\"older functions},  defined in (\ref{ass: holder}) of (\ref{ass: regularity f}),} the idea is to estimate $(f(x_0),\partial_1 f(x_0),...,\partial_d f(x_0))$  by using a local polynomial approach
\textcolor{black}{which consists in approximating $f$ by the polynomial of degree $\ell$ appearing in the Taylor expansion of $f$ at $x_0$ given in \eqref{eq:taylor_expansion}.
 {\color{black} More precisely,} let $S$ be the cardinality of the set $\{\m \in \mathbb{N}^d, \vert \m \vert \leq \ell \}$, namely
  $S= \#\{\m \in \mathbb{N}^d, \vert \m \vert \leq \ell \}$ with $\vert \m\vert=\vert(m_1,...,m_d)\vert = \sum_{i=1}^d m_i$ and{\color{black}, given $u\in\mathbb R^d$, let}
\begin{equation}\label{def U and S}
U(u)=\left(\frac{u^{\m^{(1)}}}{\m^{(1)}!},...,\frac{u^{\m^{(S)}}}{\m^{(S)}!}\right)^{\!\top}, \hspace{1.5ex}
\end{equation}
where $\{\m^{(1)},...,\m^{(S)}\}$ is the set $\{\m \in \mathbb{N}^d, \vert \m \vert \leq \ell \}$ ordered such that $\m^{(1)} = (0,...,0)$ and $\m^{(k)} = \left(\mathds{1}_{k-1}(j)\right)_{1\leq j\leq d}$, for $k \in \{2,...,d+1\}$, where $\mathds{1}_{i}(j)=1$ if $j=i$ and 0 if $j\neq i$. Moreover, for $\m=(m_1,...,m_d)\in \mathbb{N}^d$, and $u=(u_1,...,u_d)\in \mathbb{R}^d$, we use the notations $\m! = m_1!...m_d!$ and $u^{\m} = u_1^{m_1}...u_d^{m_d}$. Then,
\begin{equation}\label{eq:taylor_expansion}
  f(x)\approx\sum_{|\m|\leq\ell}\frac{1}{\m!}D^{\m}f(x_0)(x-x_0)^{\m}=\theta(x_0) U\left(\frac{x - x_0}{h}\right),
\end{equation}
where {\color{black} $h$ is a bandwidth and}
$$
D^{\m}f(x_0)=\frac{\partial^{|\m|}f}{\partial u_1^{m_1}\dots\partial u_d^{m_d}}(x_0)\quad\textrm{and}\quad\theta(x_0)=(h^{|\m^{(1)}|}D^{\m^{(1)}}f(x_0),\dots,h^{|\m^{(S)}|}D^{\m^{(S)}}f(x_0)).
$$
 Since $f$ depends only on a few variables, the vector $\theta(x_0)$ is sparse and has zero-coordinates when the partial derivatives involve variables having indices which do not belong to $J$.} This is the reason why we \textcolor{black}{will use a Lasso criterion for finding the active variables and hence estimating $J$.}

% For $\ell$ the degree of the polynomial used in the local polynomial approach, define \textcolor{black}{$S$ as the cardinality of the set $\{m \in \mathbb{N}^d, \vert m \vert \leq \ell \}$:
% $S= \#\{m \in \mathbb{N}^d, \vert m \vert \leq \ell \}$ with $\textcolor{black}{\vert m\vert}=\vert(m_1,...,m_d)\vert = \sum_{i=1}^d m_i$} and 
% \begin{equation}\label{def U and S}
% U(u)=\left(\frac{u^{m^{(1)}}}{m^{(1)}!},...,\frac{u^{m^{(S)}}}{m^{(S)}!}\right)^{\!\top}, \hspace{1.5ex}
% \end{equation}
% where $(m^{(1)},...,m^{(S)})$ is the set $\{m \in \mathbb{N}^d, \vert m \vert \leq \ell \}$ ordered such that $m^{(1)} = (0,...,0)$ and $m^{(k)} = \left(\mathds{1}_{k-1}(j)\right)_{1\leq j\leq d}$, for $k \in \{2,...,d+1\}$, \textcolor{black}{where $\mathds{1}_{i}(j)=1$ if $j=i$ and 0 if $j\neq i$}. Moreover, for $m=(m_1,...,m_d)\in \mathbb{N}^d$, and $u=(u_1,...,u_d)\in \mathbb{R}^d$, we use the notations $m! = m_1!...m_d!$ and $u^m = u_1^{m_1}...u_d^{m_d}$.

\textcolor{black}{In this section, we} take $\ell =1$ which implies $S=d+1$ and \textcolor{black}{estimate the vector}
\textcolor{black}{$\theta(x_0)=(f(x_0),h\;\partial_1f(x_0),\dots,h\;\partial_df(x_0))$ by using the following criterion:}
\begin{align}\label{def: thetahat}
  \textcolor{black}{\boldsymbol{\widehat{\theta}}(x_0,\lambda)}&\textcolor{black}{=(\widehat{\theta_0}(x_0,\lambda),...,\widehat{\theta_d}(x_0,\lambda))}\nonumber\\
  &=\arg \min_{\theta \in \mathbb{R}^{d+1}}
\left[
\frac{1}{nh^d} \sum_{i=1}^{n}
\left(
y_i - \theta U\left(\frac{x_i - x_0}{h}\right)
\right)^2
K^u\left(\frac{x_i - x_0}{h}\right)
+ 2\lambda \|\theta\|_1
\right],
\end{align}
{\color{black}where} 
\begin{equation}\label{Ku}
K^u = \frac{1}{2^d}\mathds{1}_{B_{\infty}(0,1)}
\end{equation} 
{\color{black}is} the uniform kernel on {\color{black}the unit ball} $B_{\infty}(0,1)$ with 
$B_{\infty}(x,\eta)= \{y\in\mathbb{R}^d; \Vert x-y \Vert_{\infty} \leq \eta \}$
for $x \in \mathbb{R}^d$ and $\eta >0$
{\color{black} and the $\ell_1$-norm is defined by $\Vert u \Vert_1 = \sum_{i=1}^d \vert {\color{black}u_i}\vert$ for all $u={\color{black}(u_1,...,u_d)}\in\mathbb{R}^d$.
}
%\textcolor{black}{In (\ref{def: thetahat}), $\Vert\cdot\Vert_1$ denotes the $\ell_1$-norm and is defined as follows: for all $x=(x_1,...,x_d)$ in $\mathbb{R}^d$, $\Vert x \Vert_1 = \sum_{i=1}^d \vert x_i\vert$.} 

The specific values to take for the bandwidth $h$ and the regularization parameter $\lambda$ to \textcolor{black}{get} the theoretical bounds proved in the next sections are given in Lemma \ref{lemma : J} of Section \ref{section: technical lemmas}.

The non-zero coordinates of $\textcolor{black}{\boldsymbol{\hat{\theta}}(x_0,\lambda)=(\widehat{\theta_1}(x_0,\lambda),...,\widehat{\theta_d}(x_0,\lambda))}$ defined in \eqref{def: thetahat} give
\textcolor{black}{the following estimator $\hat{J}$ of $J$}:
\begin{equation}\label{def : Jhat}
\textcolor{black}{\hat{J} = \hat{J}(\lambda)=}\left\{j,\hspace{1ex} \textcolor{black}{\widehat{\theta_j}(x_0,\lambda)} \neq 0 \right\},
\end{equation}
\textcolor{black}{where we removed the dependence in $\lambda$ to alleviate the notations.}

We then define $\hat{W}$ the estimator of $W$ as the matrix of size $(\#\hat{J})\times d$ such that every row has a 1 at the indices of $\hat{J}$, where \textcolor{black}{the cardinality $\#\hat{J}$ of $\hat{J}$ is not necessarily equal to $r$.}
We will see in the sequel that under mild assumptions, {\color{black} with high probability,} $\hat{J}=J$ which {\color{black} is equivalent to $\hat{W}=W$ by (\ref{def : variables selection})}. 
More precisely, Theorem 1 of \cite{J} implies that, under suitable assumptions,
\begin{equation}\label{eq: proba}
\mathbb{P}(\hat{J} \neq J) \leq c_0\exp(c_0d)\exp(-c_1n h^{d+2})
\end{equation}
for some positive constants $c_0$ and $c_1$ and {\color{black}a} bandwith $h$.
For the reader's convenience, this theorem is recalled in Lemma \ref{lemma : J} of Section \ref{section: technical lemmas}.

\subsection{Minimization}
We further explain here the minimization method in the case of a perfect reconstruction, \textcolor{black}{that is} when $ J=\hat{J}$ since  according to \eqref{eq: proba} the probability {\color{black} of having} $\hat{J}\neq J$ using the estimation procedure described just above is exponentially decreasing.

%One shall use the same procedure replacing the unknown $W$ by its estimator $\hat{W}$.

For $i \in \{1,...,n\}$ let us define $z_i = Wx_i$. \textcolor{black}{Since, {\color{black} by \eqref{def: g}},} $f(x_i) = g(z_i)$,  \eqref{eq: regression model} can be rewritten as follows: 
\begin{equation}\label{eq: reg g}
 \forall i \in \{1,....,n\}, \hspace{1.5ex} y_i = g(z_i)+\xi_i.
\end{equation}
We \textcolor{black}{shall use} the following method inspired by the one in \cite{tsybakov} to minimize {\color{black} the function} $g$.

For a step size $\eta_k={\color{black}1/k}$ and given $\widehat{\nabla g}$ an estimator of the gradient of $g$ we perform a gradient descent and  define the sequence
\begin{equation}\label{eq: gradient descent}
\left\{
    \begin{array}{ll}
       z^{(1)} \in W\Theta, \\
       z^{(k+1)} = \text{Proj}_{W\Theta}\left(z^{(k)} - \eta_k\widehat{\nabla g}(z^{(k)})\right),
    \end{array}
\right.
\end{equation}  %\textcolor{black}{ATTENTION : $z^{(k)}$ prête à confusion ça pourrait être interprété comme une puissance il faut écrire à mon avis $z^{(1)}$, $z^{(k)}$...}
where $\text{Proj}_{W\Theta}$ is the Euclidean projection onto the convex set $W\Theta = \{W\theta, \theta \in \Theta\}$. More generally, for a matrix $M$ and a set $X$, we denote
\textcolor{black}{$MX$ the set $\{Mx;\; x\in X\}$ when the matrix multiplication is possible.}

The estimator $\widehat{\nabla g}$ is obtained by using a local polynomial regression of degree \textcolor{black}{$\ell=\lfloor \beta \rfloor$}
  % \textcolor{black}{(A VERIFIER)}
 {\color{black} since, by (\ref{ass: holder}) of (\ref{ass: regularity f}), $f$ belongs to $\mathcal{F}_\beta(L)$}:
\begin{equation}\label{def: ghat}
\forall z \in W\Theta, \hspace{1.5ex} \widehat{\nabla g}(z)=h_n^{-1}\left(MB_{n,\lambda_n}(z)^{-1}D_n(z)\right),
\end{equation}
where $h_n$ is a {\color{black}kernel-}bandwidth and for any $z \in W\Theta$, {\color{black} and $U$ defined as in \eqref{def U and S},} we denote
\begin{equation}\label{def Bn}
B_{n}(z) = n^{-1}h_n^{-r}\sum_{i=1}^n U\left(\frac{z_i-z}{h_n}\right)U\left(\frac{z_i-z}{h_n}\right)^{\!\top} K\left(\frac{z_i-z}{h_n}\right)
\end{equation} %where $U$ is defined in \eqref{def U and S}
%so that 
and
\begin{equation}\label{def Bnlambda}
B_{n,\lambda_n} = B_n + \lambda_nI_S
\end{equation}
 with $\lambda_n$ a regularization parameter, $I_S$ the identity matrix of size $S\times S$ and  
%\begin{equation}\label{def D}
\[
D_{n}(z) = n^{-1}h_n^{-r}\sum_{i=1}^nU\left(\frac{z_i-z}{h_n}\right) K\left(\frac{z_i-z}{h_n}\right)y_i.
\]
%\end{equation}
In \eqref{def: ghat}, $M$ is the matrix such that
\[
%\begin{equation}\label{def M}
\forall i\in \{1,...,r\}, j\in \{1,...,S\}, \hspace{1.5ex} M_{i,j} = \left\{
    \begin{array}{ll}
       1 \mbox{ if } j=i+1, \\
       0 \mbox{ otherwise}.
    \end{array}
\right.
%\end{equation}
\]

Note that the values of the {\color{black}kernel-}bandwidth {\color{black} $h_n$} and of the regularization parameter {\color{black} $\lambda_n$ are given in \eqref{def: values hn et lambdan}}.

After $N\in \mathbb{N^*}$ iterations of \eqref{eq: gradient descent} we obtain an estimator 
{\color{black}
\begin{equation}\label{hatz_W}
\hat{z}_W=z^{(N)}
\end{equation}
}of $z^*$, the minimizer of $g$. {\color{black} However,} $W$ needs to be known. Hence, in practice we define $\hat{z}_{\hat{W}}$ after $N\in \mathbb{N^*}$ iterations of the same procedure but using $\hat{W}$ instead of $W$ \textcolor{black}{and $\hat{r}=\#\hat{J}$ instead of $r$.} Then, our final \textcolor{black}{estimator $\hat{x}$ of} a minimizer of $f$ on $\Theta$ is
% \begin{equation}\label{def xhat}
% \hat{x} = \text{Proj}_{\left \{ x \in \Theta; \hat Wx = \hat{z}_{\hat{W}}\right \}}\left( \hat W^\top \hat z_{\hat W}\right).
% \end{equation}
% Note that this definition means that $\hat z_{\hat W}$ is converted in a vector of $\mathbb{R}^d$ and, then, projected on $\Theta$ without changing the values {\color{black} corresponding to} the indices of $\hat J$.
\textcolor{black}{
 \begin{equation}\label{def xhat}
\hat{x} = \widetilde{\text{Proj}}_{\Theta}\left( \hat W^\top \hat z_{\hat W}\right),
\end{equation}
where $\widetilde{\text{Proj}}_{\Theta}$ means that the components of $\hat W^\top \hat z_{\hat W}$ associated to {\color{black}the} indices belonging to the complementary set of
$\hat{J}$ are projected onto $\Theta$ without changing the values corresponding to the indices of $\hat{J}$. Note that this is possible thanks to the projection in (\ref{eq: gradient descent})}.

Define the estimator $\hat{g}$ of $g$ as
\begin{equation}\label{def: g estimation}
\forall z \in W\Theta, \hspace{1.5ex} \hat{g}(z)={\color{black}\hat{g}_W(z)=}\widetilde MB_{n,\lambda_n}(z)^{-1}D_n(z),
\end{equation}
for $\widetilde M$ the matrix such that $\widetilde M_{1,1} =1$ and all the other entries are zeros.
%\textcolor{black}{ATTENTION : j'ai l'impression qu'il faut dire que cette matrice est une matrice ligne contenant un 1 en 1ère position et des zéros après.}
% In the same way as for the minimizer, the previous estimator requires the knowledge of $W$. 
{\color{black} Note that, by definition, the estimator $\hat g$ of $g$} requires the knowledge of $W$. 
Thus, we define $\hat{g}_{\hat{W}}$ the estimator of $g$ {\color{black} obtained by using} the same procedure but with $\hat{W}$ instead of $W$ \textcolor{black}{and $\hat{r}=\#\hat{J}$ instead of $r$.}

Then, we define our estimator of the minimum of $f$ on $\Theta$ as:
\begin{equation}\label{def: estimation min}
\widehat{f^*} = \hat{g}_{\hat{W}}\left(\hat{z}_{\hat{W}}\right).
\end{equation}
{\color{black} where $\hat{z}_{\hat{W}}$ is defined in \eqref{hatz_W}.}%{\color{gray} [je trouve qu'on a besoin de la ref quand on regarde le thm 3.3]}

To establish the theoretical properties of these estimators we shall use the following assumptions.

\begin{hypA}\label{ass: assumption K}
\textbf{Kernel}

We assume that the kernel $K$ is a $L_K$-Lipschitz function that has a compact support contained in the unit Euclidean ball, and satisfies 
{\color{black} the following conditions for $u\in\mathbb R^d$}
\begin{equation*}
K(u) \geq 0, \quad \int K(u)\mbox{d}u = 1, \quad \sup_{u\in \mathbb{R}^d} K(u) < \infty.
\end{equation*}
\end{hypA}

\begin{hypA}\label{ass: min g}
\textbf{Minimizer}

The function $g$ attains its minimum in $W\Theta$ at point $z^*$ and $\nabla g(z^*) = 0$.

\end{hypA}
\begin{hypA}\label{ass: regularity f}
\textbf{Regularity}
\begin{subhypA}\label{ass: holder} %{\color{magenta} [meme $L$ que dans (A1)?]} \textcolor{black}{je suis d'accord il faudrait mettre $L_1$ dans la 1ère hypothèse par exemple.}
  
 The function $f$ belongs to the class $\mathcal{F}_{\beta}(L)$ of $\beta$-Hölder functions with $\beta \geq 2$, where, for $\beta, L > 0$, we denote by $\mathcal{F}_\beta(L)$ the class of $\ell = \lfloor \beta \rfloor$ times differentiable functions 
$f : \mathbb{R}^d \to \mathbb{R}$ such that
\begin{equation}\label{eq: holder norme 2}
\forall x, x' \in \mathbb{R}^d, \quad
\left| 
f(x) - \sum_{\substack{\m \in \mathbb{N}^d, \\ |\m| \le \ell}} \frac{1}{\m!} D^{\m} f(x')\,{\color{black}(x - x')}^m 
\right|
\le L \|x - x'\|^\beta,
\end{equation}
where $\| \cdot \|$ denotes the Euclidean norm and for $\m = (m_1,...,m_d) \in \mathbb{N}^d$ and $u = (u_1,...,u_d) \in \mathbb{R}^d$,
%\begin{equation}\label{def: truc m}
\[
\vert \m \vert = \sum_{i=1}^d \vert m_i \vert, \quad \m! = m_1!...m_d!, \quad D^{\m}f = \frac{\partial^{\vert \m \vert}f}{\partial x_1^{m_1}...\partial x_d^{m_d}}, \quad u^{\m} = u_1^{m_1}...u_d^{m_d}.
%\end{equation}
\]
\end{subhypA}

\begin{subhypA}\label{ass: hessienne}
  {\color{black}{The Hessian matrix is such that $\lVert \nabla^2 f \lVert_{\text{op}}$ is bounded \textcolor{black}{by some positive constant on $\Theta$}, where
  {\color{black} $\lVert M \rVert_{\text{op}} = \sup_{x \in \mathbb{R}^q \backslash \{0\}}\frac{\lVert Mx \rVert}{\lVert x \rVert}$ for a given $p\times q$ matrix $M$.}
%      $\lVert \cdot \rVert_{\text{op}}$ of a matrix $M$ of size $p\times q$ is defined by $\lVert M \rVert_{\text{op}} = \sup_{x \in \mathbb{R}^q \backslash \{0\}}\frac{\lVert Mx \rVert}{\lVert x \rVert}$.
      }}
%This assumption is satisfied for $\beta \geq 3$.
%{\color{black} [$C$ : on a besoin de donner un nom \`a la constante? $C$ d\'ej\`a utilis\'e en (A6)]}
\end{subhypA}
\begin{subhypA}\label{ass: strong convexity}
For $\alpha > 1$, the function $g$ is $\alpha$-strongly convex on $W\Theta$ which means that for any $z,y \in W\Theta$, we have 
$$g(y) \geq g(z) +\nabla g(z)^\top(y-z)+\frac{\alpha}{2}\Vert z-y\Vert ^2.
$$
\end{subhypA}
\begin{subhypA}\label{ass: bounded}
  The function $f$ is uniformly bounded on the set
\begin{equation}\label{eq:Theta_prime}
  \Theta' = \{x+y; x\in \Theta, \Vert y \Vert \leq 1\}.
\end{equation}
  
\end{subhypA}
\end{hypA}

\begin{hypA}\label{ass: noise}\textbf{Noise}

%For all $i,j \in \{1,...n\}$ 
{\color{black}We} assume that {\color{black} $\xi_1,\dots, \xi_n$ and $x_1,\dots, x_n$} are independent. We also suppose that {\color{black}$\xi_1,\dots,\xi_n$} are centered and satisfy the sub-Gaussian assumption:
$$\exists \sigma>0,\quad\forall i \in \{1,...n\}, \quad \mathbb{E}\left(\exp(\vert \xi_i \vert /\sigma)\right)<\infty.$$
\end{hypA}

\begin{hypA}\label{ass: density}
\textbf{Density}

{\color{black}We assume that $x_1, \dots, x_n$} are {\color{black} i.i.d.} with a density $p$ and {\color{black} that there exist} some constants $\eta$, $p_m > 0$, $p_M \geq 1$ and $\tilde{L} > 0$ such that
\begin{itemize}
  \item $p$ is $\tilde{L}$-Lipschitzian around $x_0$, that is for any $t \in \mathcal{B}_\infty(x_0,1)$,
\[
|p(x_0) - p(t)| \leq \tilde{L} \|x_0 - t\|_\infty,
\]
%\item $\mathcal{B}_\infty(x_0,\eta) \subset \operatorname{supp}(p)$
\item $\forall x \in \Theta'\cup\{x_0\}, \quad \mathcal{B}_\infty(x,\eta) \subset \operatorname{supp}(p)$ and $p_m \leq p(y)$ for every $y \in \mathcal{B}_\infty(x,\eta)$,
  \textcolor{black}{where $\Theta'$ is defined in (\ref{eq:Theta_prime})},
\item $\forall x \in \mathbb{R}^d, \quad p(x)\leq p_{M} <\infty$.
\end{itemize}

\end{hypA}

\begin{hypA}\label{ass: distinguishable properties}
  \textbf{Distinguishability}
  
  There exists a constant $C \geq 1512\left(\frac{p_{M}}{p_{m}}\right)L\sqrt{r}$ such that
\begin{subhypA}\label{ass: distinguishable}
 $\vert \partial_j f(x_0)\vert \geq C$ for any $j\in J$, where $p_{m}$ and $p_{M}$ are defined in \eqref{ass: density} \textcolor{black}{and} $L$ is the Hölder constant defined in \eqref{ass: holder} of \eqref{ass: regularity f},

\end{subhypA}
\begin{subhypA}\label{ass: distinguishable2}
$\vert f(x_0) \vert  > Ch$ where $h$ {\color{black} is defined in Lemma \ref{lemma : J} and corresponds to} the bandwidth used in the variable selection method.
\end{subhypA}
\end{hypA}

\section{Theoretical results} \label{section: theoretical results}
 
By \eqref{ass: min g}, $g$ has a unique minimizer $z^*$ in the space $W\Theta {\color{black}= \{Wx, x\in \Theta\}}$.
%{\color{salmon} [deja defini apr\`es \eqref{eq: gradient descent}, \`a enlever ?]}.
We can then write $z^* = Wx^*$, where $x^* \in \Theta$ {\color{black} is a minimizer of {\color{black} the function} $f$ defined} {\color{black} in \eqref{def: g}}. Since $f$ has the same value for all the elements of $\text{Ker}(W)+x^*$, estimating a minimizer of {\color{black} the} function $f$ on $\Theta$ means approaching $\left(\text{Ker}(W)+x^*\right)\cap \Theta$.

Hence, in order to assess the estimation error we shall consider the quadratic error:
\begin{equation}\label{def e}
\lVert W(x^* -\hat{x})\rVert^2=\lVert W^\top W(x^* -\hat{x})\rVert^2, 
\end{equation}
for some estimator $\hat{x} \in \Theta$. Note that the equality holds since $WW^\top = {\color{black}I_r}$.

%In the following results, we \textcolor{black}{will use the following notation:} $\vert \Theta \vert = \sup_{x,y \in \Theta} \lVert x-y \rVert$.
\textcolor{black}{To present} our main results we need to introduce the Lambert functions 
\begin{equation}\label{def: lambert functions}
%\mathcal{W}:
{\color{black} \mathcal{W}_{-1}}: \quad [-e^{-1}, 0[ \quad \longrightarrow \quad ]-\infty,-1] \quad \text{ and } \quad 
%\mathcal{W}_{-1}
{\color{black} \mathcal{W}} :\quad  [-e^{-1}, +\infty[\quad \longrightarrow \quad [-1,+\infty[
\end{equation}
as the inverses of $x \mapsto xe^x$ on \textcolor{black}{their domains.}

For $\beta \geq 2$, we define 
% \begin{equation}\label{def: N beta}
% N^0_{n,\beta} = \left\lceil \left(\frac{n}{\mathcal{W}(n)}\right)^{\frac{2(\beta-1)}{2\beta +r}}\right\rceil, \quad N^1_{n,\beta} = \left\lfloor \left(\frac{n}{\mathcal{W}(n)} \right)^{ \frac{2(\beta-1)}{2\beta +r}}\right\rfloor,
% \end{equation} where $\lfloor \cdot \rfloor$ and $\lceil \cdot \rceil$ respectively denotes the floor and the ceiling function.
\textcolor{black}{\begin{equation}\label{def: N beta}
 N_{n,\beta} = \left\lceil \left(\frac{n}{\mathcal{W}(n)}\right)^{\frac{2(\beta-1)}{2\beta +r}}\right\rceil,
\end{equation}
where $\lceil \cdot \rceil$ denotes the ceiling function.}

In this section we also \textcolor{black}{use the following {\color{black}kernel-}bandwidth and regularization parameter:}
\begin{equation}\label{def: values hn et lambdan}
 h_n =  \left(\frac{\mathcal{W}(n)}{n}\right)^{\frac{1}{2\beta+r}} \text{, } \lambda_n=  \left(\frac{\mathcal{W}(n)}{n}\right)^{\frac{\beta}{2\beta+r}}.
\end{equation}

\begin{theorem}\label{thm:minimizer} \textbf{About the minimizer estimation}
{\color{black}Let  $\mathcal W $ and $\mathcal{W}_{-1}$ be the Lambert functions defined in \eqref{def: lambert functions}. Let $c_1$ and $h$ be the positive constants introduced in \eqref{eq: proba} and let 
\[
n \geq \max\left(-\frac{\mathcal{W}_{-1}(-c_1h^{d+2})}{c_1h^{d+2}}\mathds{1}_{\{c_1h^{d+2}< e^{-1}\}},3\right).
\]}
Assume that \eqref{ass: assumption K} -- \eqref{ass: distinguishable properties} hold and that {\color{black} $y_1,\dots, y_n$} satisfy Model \eqref{eq: regression model} {\color{black} where {\color{black} $\xi_1,\dots,\xi_n$} are assumed to be Gaussian random variables.}
{\color{black} Consider} the estimator $\hat{x}$  of a minimizer of $f$ on $\Theta$ defined in \eqref{def xhat}. 
{\color{black} {\color{black} Let} the {\color{black}kernel-}bandwidth $h_n$ and the regularization parameter $\lambda_n$ {\color{black}be} defined in \eqref{def: values hn et lambdan} and {\color{black} assume}
that the  number of iterations is such that}
\begin{equation}\label{eq:minoration_N}
N > \textcolor{black}{N_{n,\beta}}\left(\left(\frac{n}{\mathcal{W}(n)}\right)^{\frac{2(\beta-1)}{2\beta+r}}\vert\Theta\vert^2\left(\frac{\alpha-1}{\alpha+1}\right)+1\right)
\end{equation}
% $$N > N^0_{n,\beta}\left(\left(\frac{n}{\mathcal{W}(n)}\right)^{\frac{2(\beta-1)}{2\beta+r}}\vert\Theta\vert\left(\frac{\alpha-1}{\alpha+1}\right)+1\right)\text{ and } n \geq \max\left(-\frac{\mathcal{W}_{-1}(-c_1h^{d+2})}{c_1h^{d+2}}\mathds{1}_{\{c_1h^{d+2}< e^{-1}\}},3\right),$$ $c_1$ and $h$ being positive constants introduced in \eqref{eq: proba}, satisfies
%\[
%\mathbb{E}\left(\lVert W (x^*-\hat{x})\rVert^2 \right)\leq B \left(\frac{\mathcal{W}(n)}{n}\right)^{\frac{2(\beta-1)}{2\beta+r}},
%\]
where \textcolor{black}{$N_{n,\beta}$} is defined in (\ref{def: N beta}) {\color{black} and $\vert \Theta \vert = \sup_{x,y \in \Theta} \lVert x-y \rVert$}. Then 
\[
\mathbb{E}\left(\lVert W (x^*-\hat{x})\rVert^2 \right)\leq B \left(\frac{\mathcal{W}(n)}{n}\right)^{\frac{2(\beta-1)}{2\beta+r}},
\]
for \textcolor{black}{some} positive constant $B$. 
\end{theorem}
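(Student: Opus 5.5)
We split the risk according to whether the variable selection step succeeded:
\[
\mathbb{E}\lVert W(x^*-\hat x)\rVert^2=\mathbb{E}\bigl(\lVert W(x^*-\hat x)\rVert^2\mathds{1}_{\{\hat J=J\}}\bigr)+\mathbb{E}\bigl(\lVert W(x^*-\hat x)\rVert^2\mathds{1}_{\{\hat J\neq J\}}\bigr).
\]

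\textbf{Failure event.} Since $\hat x\in\Theta$ and $x^*\in\Theta$, we have $\lVert W(x^*-\hat x)\rVert^2\le \vert\Theta\vert^2$. By \eqref{eq: proba} (Lemma \ref{lemma : J}) the second term is therefore at most $\vert\Theta\vert^2c_0e^{c_0d}\exp(-c_1nh^{d+2})$.

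The lower bound on $n$ involving $\mathcal W_{-1}$ is exactly what makes this exponential term at most of order $1/n$. Indeed, the condition $n\ge -\mathcal W_{-1}(-c_1h^{d+2})/(c_1h^{d+2})$ is equivalent to $e^{-c_1h^{d+2}n}\le 1/n$. Moreover $1/n\le (\mathcal W(n)/n)^{2(\beta-1)/(2\beta+r)}$, because the exponent is less than $1$ and $\mathcal W(n)\ge 1$ for $n\ge 3$ (as $n\ge e$). So the failure term is negligible.

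\textbf{Success event.} On $\{\hat J=J\}$ we have $\hat W=W$, $\hat r=r$, $\hat z_{\hat W}=\hat z_W=z^{(N)}$. Since $\widetilde{\text{Proj}}_\Theta$ leaves the coordinates in $J$ unchanged, $W\hat x=z^{(N)}$. Hence the first term is at most $\mathbb{E}\lVert z^{(N)}-z^*\rVert^2$, and it suffices to bound the risk of the projected stochastic gradient scheme \eqref{eq: gradient descent} in dimension $r$.

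\textbf{Recursion for the gradient scheme.} Projection onto the convex set $W\Theta$ is nonexpansive and $z^*\in W\Theta$. Writing $\widehat{\nabla g}(z)=\nabla g(z)+b(z)+v(z)$ (bias plus stochastic part), we get
\[
\lVert z^{(k+1)}-z^*\rVert^2\le \lVert z^{(k)}-z^*\rVert^2-2\eta_k\langle \widehat{\nabla g}(z^{(k)}),z^{(k)}-z^*\rangle+\eta_k^2\lVert\widehat{\nabla g}(z^{(k)})\rVert^2 .
\]
Strong convexity \eqref{ass: strong convexity} together with $\nabla g(z^*)=0$ from \eqref{ass: min g} gives $\langle\nabla g(z),z-z^*\rangle\ge\alpha\lVert z-z^*\rVert^2$. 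The error term is handled by Young's inequality: $2\langle e,z-z^*\rangle\le \lVert z-z^*\rVert^2+\lVert e\rVert^2$. This leads to
\[
a_{k+1}\le\Bigl(1-\frac{2\alpha-1}{k}\Bigr)a_k+\frac{C}{k}\,\mathbb{E}\sup_{z\in W\Theta}\lVert\widehat{\nabla g}(z)-\nabla g(z)\rVert^2+\frac{C'}{k^2},
\]
with $a_k=\mathbb{E}\lVert z^{(k)}-z^*\rVert^2$. The $\eta_k^2$ term is bounded using boundedness of the gradient and its estimator on $W\Theta$, via \eqref{ass: hessienne} and the regularization $\lambda_n$, which keeps $\lVert B_{n,\lambda_n}^{-1}\rVert\le\lambda_n^{-1}$.

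\textbf{Main obstacle: a uniform gradient-error bound.} The key step is to show
\[
\mathbb{E}\sup_{z\in W\Theta}\lVert\widehat{\nabla g}(z)-\nabla g(z)\rVert^2\lesssim h_n^{2(\beta-1)}+\frac{\log n}{nh_n^{r+2}}\asymp\Bigl(\frac{\mathcal W(n)}{n}\Bigr)^{\frac{2(\beta-1)}{2\beta+r}}.
\]
A uniform bound is needed because the iterates $z^{(k)}$ depend on the same sample used to build $\widehat{\nabla g}$. I would argue as follows.
\begin{itemize}
\item The Hölder condition \eqref{ass: holder} gives the bias $h_n^{\beta-1}$, after multiplying by $h_n^{-1}$ and using that $\theta U$ reproduces $g$ up to $Lh_n^\beta$ on the kernel support.
\item The density lower bound \eqref{ass: density} on $\Theta'$ gives that the population matrix $\mathbb{E}B_n(z)$ has smallest eigenvalue bounded below uniformly.
\item A concentration bound for $B_n(z)$ and for the noise term, combined with a net argument using that $K$ is Lipschitz \eqref{ass: assumption K} and that the noise is Gaussian \eqref{ass: noise}, gives uniform deviations of order $\sqrt{\log n/(nh_n^r)}$ on an event of probability $1-O(1/n)$.
\item On the complement of this event, the crude bound $\lambda_n^{-1}$ on $\lVert B_{n,\lambda_n}^{-1}\rVert$ controls the error, and $\lambda_n$ is chosen precisely so that this contribution remains of the target order.
\end{itemize}
The identity $\mathcal W(n)e^{\mathcal W(n)}=n$ converts $\log n$ into $\mathcal W(n)$ in the choice \eqref{def: values hn et lambdan}.

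\textbf{Solving the recursion.} Since $\alpha>1$, the contraction coefficient $1-(2\alpha-1)/k$ can be unrolled by a Chung-type lemma, giving
\[
a_N\lesssim \frac{\vert\Theta\vert^2\,\text{poly}}{N^{\alpha'}}+\Bigl(\frac{\mathcal W(n)}{n}\Bigr)^{\frac{2(\beta-1)}{2\beta+r}}+\frac1N .
\]
The lower bound \eqref{eq:minoration_N} on $N$, which involves $N_{n,\beta}$, $\vert\Theta\vert^2$ and $(\alpha-1)/(\alpha+1)$, is what makes both the initialization term and the $1/N$ term smaller than $(\mathcal W(n)/n)^{2(\beta-1)/(2\beta+r)}$. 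Collecting the success and failure terms then yields the bound with some constant $B$.
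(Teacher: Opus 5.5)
Your reduction is the paper's own. You split on $\{\hat J=J\}$, bound the failure term by $\vert\Theta\vert^2c_0e^{c_0d}e^{-c_1nh^{d+2}}$ through Lemma~\ref{lemma : J}, and absorb it with the $\mathcal W_{-1}$ condition exactly as in Lemma~\ref{lemmaW}. (That condition is sufficient for $e^{-c_1h^{d+2}n}\le 1/n$, not equivalent to it, but sufficiency is all you use.) You also note that $W\hat x=\hat z_W$ on the success event. At that point the paper simply invokes Theorem~\ref{thm: tsybakov}, i.e.\ \eqref{eq:thm5.1}, and uses \eqref{eq:minoration_N} to make the initialization term $\frac{N_{n,\beta}-1}{N-1}\vert\Theta\vert^2$ of the target order.

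You instead re-derive the optimization bound, and your recursion is a genuinely different and sharper route. The paper uses only $g(z^*)\le g(z^{(k)})$ (constant $\alpha/2$) and Young's inequality with weight $(\alpha-1)/2$, then weakens the contraction to $1-1/k$. The product $\prod_i(1-1/i)$ then telescopes to $(N_{n,\beta}-1)/k$, so the initialization decays only like $N_{n,\beta}/N$. This is why \eqref{eq:minoration_N} requires $N$ of order $N_{n,\beta}^2$. You use $\langle\nabla g(z),z-z^*\rangle\ge\alpha\lVert z-z^*\rVert^2$ (valid by \eqref{ass: min g}) and keep the factor $1-(2\alpha-1)/k$ with $2\alpha-1>1$. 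Unroll from $k_0=\lceil 2\alpha-1\rceil$, where $a_{k_0}\le\vert\Theta\vert^2$ trivially and all later coefficients are nonnegative. With $\delta_n=(\mathcal W(n)/n)^{2(\beta-1)/(2\beta+r)}$ and $\alpha$-dependent constants, this gives $a_N\lesssim\vert\Theta\vert^2N^{-(2\alpha-1)}+\delta_n+C'/((\alpha-1)N)$. Hence $N\ge N_{n,\beta}$ already suffices, no $\log N$ appears, and \eqref{eq:minoration_N} certainly implies this.

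Your sketch fails as written in the gradient-estimator ingredient, in two places.

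First, the $\eta_k^2$ term. $C'$ enters the final bound as $C'/N$, so it must not grow with $n$. But $\lVert B_{n,\lambda_n}^{-1}\rVert_{\text{op}}\le\lambda_n^{-1}$ only gives $\lVert\widehat{\nabla g}\rVert\le h_n^{-1}\lambda_n^{-1}\lVert D_n\rVert$, which blows up. The paper instead uses $\mathbb E\sup\lVert\widehat{\nabla g}\rVert^2\le 2\sup\lVert\nabla g\rVert^2+2\,\mathbb E\sup\lVert\widehat{\nabla g}-\nabla g\rVert^2$ together with the uniform error bound.

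Second, and more seriously, the good/bad event argument. On $E^c$ the crude bound yields at best a contribution of order $h_n^{-2}\lambda_n^{-2}\,\mathbb P(E^c)=h_n^{-2-2\beta}\,\mathbb P(E^c)$. For this to be $\lesssim h_n^{2(\beta-1)}$ you need $\mathbb P(E^c)\lesssim h_n^{4\beta}=(\mathcal W(n)/n)^{4\beta/(2\beta+r)}$, which is $o(1/n)$ as soon as $2\beta>r$. So an event of probability $1-O(1/n)$ is not enough.

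Nor is $\lambda_n$ tuned to make that contribution harmless. The choice $\lambda_n=h_n^\beta$ balances the ridge bias $h_n^{-1}\lambda_n$ (missing from your bias bullet) against $h_n^{\beta-1}$, and it gives $nh_n^r=\lambda_n^{-2}\mathcal W(n)$. The paper exploits the latter in moment form, with no event at all (Lemma~\ref{lemma 20}). It combines $\lVert B_{n,\lambda_n}^{-1}-(\mathbb E B_{n,\lambda_n})^{-1}\rVert_{\text{op}}\le\lambda_{\min}^{-1}\lambda_n^{-1}\lVert B_{n,\lambda_n}-\mathbb E B_{n,\lambda_n}\rVert_{\text{op}}$ with $\mathbb E\sup\lVert B_{n,\lambda_n}-\mathbb E B_{n,\lambda_n}\rVert_{\text{op}}^4\lesssim(\mathcal W(n)/(nh_n^r))^2=\lambda_n^4$. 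This yields $\mathbb E\sup\lVert B_{n,\lambda_n}^{-1}\rVert_{\text{op}}^4\lesssim\lambda_{\min}^{-4}$.

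If you prefer to keep events, take $E=\{\sup_z\lVert B_n(z)-\mathbb E B_n(z)\rVert_{\text{op}}\le\lambda_{\min}/2\}$. Bernstein (Lemma~\ref{lemma 27}) at a constant threshold, plus a net of polynomial size, gives $\mathbb P(E^c)\le n^{C}e^{-c\,nh_n^r}$. That kills any polynomial crude bound. With either repair, or by simply citing Theorem~\ref{thm: tsybakov}, your argument goes through.
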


%\textcolor{black}{on avait dit qu'on ferait une remarque sur l'hypothèse gaussienne. Il faut parler de l'optimalité de la vitesse.}
\textcolor{black}{Note that the Gaussian assumption is only used for identifying the active variables on which $f$ depends. Note also that based on the bounds established by \cite{tsybakov}, the upper bound obtained in Theorem \ref{thm:minimizer} is optimal up to a factor depending on the Lambert function which is smaller than the logarithm as mentioned in (\ref{eq:majoration_W_log}) {\color{black} of Lemma  \ref{lemma: proporties lambert function}}. Hence, we slightly improved the results of \cite{tsybakov}  in the more challenging framework where the number of variables {\color{black}$r$} and the variables {\color{black} themselves} on which $f$ actually depends {\color{black}are} not known and we show that our approach achieves the same rates as if the active variables were known beforehand.}

Theorem \ref{thm:minimizer} is proved in Section \ref{section: thm minimizer proof} and comes from the following theorem which is proved in Section \ref{section: proof tsybakov}.

\begin{theorem}\label{thm: tsybakov}
{\color{black} Let $\mathcal W $ and $\mathcal{W}_{-1}$ be the Lambert functions defined in \eqref{def: lambert functions}}.
{\color{black} Assume that \eqref{ass: assumption K} -- \eqref{ass: density}} hold and that {\color{black}$y_1,\dots, y_n$} satisfy \eqref{eq: reg g}. Let the {\color{black}kernel-}bandwidth {\color{black} $h_n$} and the regularization parameter {\color{black}$\lambda_n$} be {\color{black} as} in \eqref{def: values hn et lambdan} {\color{black} and let $N_{n,\beta}$
  %$N^0_{n,\beta}$ and $ N^1_{n,\beta}$
  be defined in  \eqref{def: N beta}.} 
Then, for \textcolor{black}{$z^{(N)}$}, the last iteration of the gradient descent \eqref{eq: gradient descent} with step $\eta_k = k^{-1}$, %\frac{1}{k}$, 
we have \textcolor{black}{for $n\geq 3$},
\begin{equation}\label{eq:thm5.1}
	\text{if } N > \textcolor{black}{N_{n,\beta}}, \quad \mathbb{E}(\lVert \textcolor{black}{z^{(N)}}- z^* \rVert^2) \leq \frac{N_{n,\beta} -1}{N-1} {\color{black} \vert\Theta\vert^2} + \frac{N-N_{n,\beta}}{N-1}A\left(\frac{\alpha+1}{\alpha-1}\right)\left(\frac{\mathcal{W}(n)}{n}\right)^{\frac{2(\beta-1)}{2\beta +r}},
\end{equation}

\begin{equation}\label{eq:thm5.2}
  \text{if } 2\leq N \leq \textcolor{black}{N_{n,\beta}}, \quad \mathbb{E}(\lVert \textcolor{black}{z^{(N)}}- z^* \rVert^2) \leq A\left(\frac{\alpha+1}{\alpha-1}\right)\frac{1+\log(N-1)}{N-1},
\end{equation}
for some positive constant $A$. %{\color{salmon}[meme $A$ dans les deux inegalit\'es?]}
\end{theorem}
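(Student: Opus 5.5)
The plan is to follow the recursive-inequality argument of \cite{tsybakov} for projected stochastic gradient descent with step $\eta_k=1/k$, with the gradient estimator \eqref{def: ghat} treated as a biased, noisy oracle. Set $\delta_k=\mathbb{E}\lVert z^{(k)}-z^*\rVert^2$. Since $z^*\in W\Theta$ and the projection onto the convex set $W\Theta$ is $1$-Lipschitz,
\[
\lVert z^{(k+1)}-z^*\rVert^2\le \lVert z^{(k)}-z^*\rVert^2-2\eta_k\widehat{\nabla g}(z^{(k)})^\top(z^{(k)}-z^*)+\eta_k^2\lVert\widehat{\nabla g}(z^{(k)})\rVert^2 .
\]
We then write $\widehat{\nabla g}(z^{(k)})=\nabla g(z^{(k)})+e_k$, where $e_k=e(z^{(k)})$ and $e(z)=\widehat{\nabla g}(z)-\nabla g(z)$.

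By strong convexity \eqref{ass: strong convexity} together with $\nabla g(z^*)=0$ from \eqref{ass: min g}, we have $\nabla g(z)^\top(z-z^*)\ge\alpha\lVert z-z^*\rVert^2$. The cross term is handled with Young's inequality, $2\lvert e_k^\top(z^{(k)}-z^*)\rvert\le \lVert z^{(k)}-z^*\rVert^2+\lVert e_k\rVert^2$, since $\alpha>1$. The quadratic term is bounded using \eqref{ass: hessienne}: $\nabla g$ is bounded on the compact set $W\Theta$, so $\lVert\widehat{\nabla g}\rVert^2\le 2\sup\lVert\nabla g\rVert^2+2\lVert e\rVert^2$. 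Combining these gives the recursion
\[
\delta_{k+1}\le\Bigl(1-\frac{\alpha-1}{k}\Bigr)\delta_k+\frac{c}{k}\,\mathbb{E}\lVert e_k\rVert^2+\frac{c}{k^2}.
\]
The key difficulty is that $z^{(k)}$ depends on the same sample used to build $\widehat{\nabla g}$. To circumvent it, we bound $\mathbb{E}\lVert e_k\rVert^2\le \mathbb{E}\sup_{z\in W\Theta}\lVert e(z)\rVert^2$.

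\textbf{Main step and obstacle: a uniform bound on the gradient error.} We aim to show
\[
\mathbb{E}\sup_{z\in W\Theta}\lVert\widehat{\nabla g}(z)-\nabla g(z)\rVert^2\le A'\Bigl(\frac{\mathcal{W}(n)}{n}\Bigr)^{\frac{2(\beta-1)}{2\beta+r}}.
\]
The argument splits the error into a bias part and a stochastic part.

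For the bias, write $D_n(z)=B_n(z)\theta(z)+R_n(z)+\Xi_n(z)$, where $\theta(z)$ is the scaled vector of Taylor coefficients of $g$ at $z$, $R_n$ is the Hölder remainder (of order $h_n^\beta$ by \eqref{ass: holder}, using that the kernel support is contained in the unit ball and that $f$ is bounded on $\Theta'$), and $\Xi_n$ is the noise term. The ridge term contributes $\lambda_n B_{n,\lambda_n}^{-1}\theta(z)$, which is of order $\lambda_n=h_n^\beta$ once the smallest eigenvalue of $B_{n,\lambda_n}$ is bounded below. After multiplying by $h_n^{-1}M$, the bias is of order $h_n^{\beta-1}$.

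For the stochastic part, we need two uniform controls, both via a net over $W\Theta$ combined with the Lipschitz property of $K$ from \eqref{ass: assumption K} and Bernstein-type concentration (sub-Gaussian noise \eqref{ass: noise}, density bounds \eqref{ass: density}). First, the smallest eigenvalue of $B_n(z)$ must stay above a constant times $p_m$ uniformly in $z$, except on an event of small probability. On that exceptional event we use $B_{n,\lambda_n}\succeq\lambda_n I_S$ together with a crude bound, and its probability must be exponentially small to absorb the factor $\lambda_n^{-2}$. Second, $\sup_z\lVert\Xi_n(z)\rVert^2$ must be of order $\log n/(nh_n^r)$. The choice \eqref{def: values hn et lambdan} solves $h_n^{2\beta}=\mathcal{W}(n)/(nh_n^r)$; the Lambert function appears precisely because the logarithmic factor coming from the net is balanced in this equation, so that the squared gradient error becomes $h_n^{-2}\cdot h_n^{2\beta}=(\mathcal{W}(n)/n)^{2(\beta-1)/(2\beta+r)}$. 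This uniform concentration step is where I expect most of the work to lie.

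\textbf{Solving the recursion.} Writing $\varepsilon_n=(\mathcal{W}(n)/n)^{2(\beta-1)/(2\beta+r)}$, the recursion becomes $\delta_{k+1}\le(1-(\alpha-1)/k)\delta_k+c\,\varepsilon_n/k+c/k^2$, which is then unrolled in two regimes.

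For $2\le N\le N_{n,\beta}$, we have $\varepsilon_n\lesssim 1/k$ because $k\le N_{n,\beta}\approx\varepsilon_n^{-1}$. The recursion therefore reduces to $\delta_{k+1}\le(1-(\alpha-1)/k)\delta_k+c'/k^2$. A standard induction, or a telescoping argument in the style of Chung's lemma with weights $k$, then gives $\delta_N\le A\frac{\alpha+1}{\alpha-1}\frac{1+\log(N-1)}{N-1}$, which is \eqref{eq:thm5.2}.

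For $N>N_{n,\beta}$, the $\varepsilon_n/k$ term dominates the $1/k^2$ term. We average the recursion with weights proportional to $k-1$, so that the factor $(k-1)$ telescopes. Bounding the iterates before $N_{n,\beta}$ trivially by $\delta_k\le|\Theta|^2$ produces the convex-combination form $\frac{N_{n,\beta}-1}{N-1}|\Theta|^2+\frac{N-N_{n,\beta}}{N-1}A\frac{\alpha+1}{\alpha-1}\varepsilon_n$, which is \eqref{eq:thm5.1}. The constant factor $\frac{\alpha+1}{\alpha-1}$ arises from dividing by the contraction rate $\alpha-1$ when summing the perturbation terms.
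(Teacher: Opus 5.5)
Your overall architecture matches the paper's proof. You use the projection contraction, decouple the iterate from the sample by passing to $\sup_{z\in W\Theta}$, and bound $\mathbb{E}\sup_{z}\lVert\widehat{\nabla g}(z)-\nabla g(z)\rVert^2$ uniformly by your $\varepsilon_n=(\mathcal{W}(n)/n)^{2(\beta-1)/(2\beta+r)}$. The paper gets that bound from Lemmas \ref{lemma 11} and \ref{lemma 12}. Its Lemma \ref{lemma 20} controls $\mathbb{E}\sup_z\lVert B_{n,\lambda_n}(z)^{-1}\rVert_{\text{op}}^4$ through moments and $\lVert B_{n,\lambda_n}^{-1}\rVert_{\text{op}}\le\lambda_n^{-1}$ rather than through an exceptional event, but your plan there is reasonable. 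You then unroll in two regimes split at $N_{n,\beta}$, as the paper does.

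\textbf{Where it fails: solving the recursion.} You write the contraction factor as $1-\frac{\alpha-1}{k}$ and claim that weighting by $k$ (resp.\ $k-1$) makes it telescope. That needs $k-(\alpha-1)\le k-1$, i.e.\ $\alpha\ge 2$, whereas the theorem only assumes $\alpha>1$. For $a=\alpha-1\in(0,1)$ your recursion genuinely does not give the stated rates, because $\prod_{i=m}^{k}(1-a/i)\asymp(m/k)^{a}$. In the regime $N>N_{n,\beta}$, the initialization term then decays only like $(N_{n,\beta}/N)^{\alpha-1}\lvert\Theta\rvert^2$, not $\frac{N_{n,\beta}-1}{N-1}\lvert\Theta\rvert^2$ as in \eqref{eq:thm5.1}. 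In the regime $N\le N_{n,\beta}$, the first step $\delta_2\le(2-\alpha)\delta_1+O(1)$ does not remove $\delta_1$. This leaves a term of order $N^{-(\alpha-1)}\lvert\Theta\rvert^2$, which is not $O(\log N/N)$ as \eqref{eq:thm5.2} requires.

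\textbf{The repair: pin the contraction factor at $1-\frac1k$.} Then $\prod_{i=m}^{k}(1-\frac1i)=\frac{m-1}{k}$ produces exactly the weights $\frac{N_{n,\beta}-1}{N-1}$ and $\frac{N-N_{n,\beta}}{N-1}$ of \eqref{eq:thm5.1}, using $\delta_{N_{n,\beta}}\le\lvert\Theta\rvert^2$. Likewise $k\delta_{k+1}\le(k-1)\delta_k+c/k$ telescopes to $\delta_{k+1}\le\frac{c}{k}\sum_{j\le k}\frac1j$, since the factor vanishes at $k=1$.

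Your own ingredients already give this. The inequality $\nabla g(z)^\top(z-z^*)\ge\alpha\lVert z-z^*\rVert^2$ you state is correct: sum the two strong-convexity inequalities and use $\nabla g(z^*)=0$. With unit-weight Young it gives the factor $1-\frac{2\alpha-1}{k}\le 1-\frac1k$. The coefficient $1-\frac{\alpha-1}{k}$ you wrote is what the weaker bound $\frac{\alpha}{2}\lVert z-z^*\rVert^2$ gives.

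The paper uses that weaker bound and compensates with $2uv\le\gamma u^2+v^2/\gamma$, taking $\gamma=\frac{\alpha-1}{2}$. The contraction becomes $1-\frac{\alpha+1}{2k}\le 1-\frac1k$, and the error term carries a factor $\frac{2}{\alpha-1}$. The $A/k^2$ term is then absorbed using $1/k\le\varepsilon_n$ when $k\ge\varepsilon_n^{-1}$, or $\varepsilon_n<1/k$ when $k<\varepsilon_n^{-1}$. This gives the constant $\frac{2}{\alpha-1}+1=\frac{\alpha+1}{\alpha-1}$, so that factor comes from the Young weight, not from dividing by a contraction rate.
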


% {\color{magenta} Il me semble que dans l'\'eq \eqref{eq:thm5.1} il a $\vert\Theta\vert^2$ et pas $\vert\Theta\vert$ [cfr p.9]}

\textcolor{black}{\begin{remark}\label{rmk1}
In practice, the exponent $\beta$ appearing in the H\"older condition is often unknown. Since $\beta \geq 2$ {\color{black} by \eqref{ass: holder} of \eqref{ass: regularity f}, }we can take
\begin{equation*}
h_n =  \left(\frac{\mathcal{W}(n)}{n}\right)^{\frac{1}{4+r}} \text{, } \lambda_n=  \left(\frac{\mathcal{W}(n)}{n}\right)^{\frac{4}{4+r}}
\end{equation*}
in practical situations. For those values of bandwidth and regularization parameters, Inequalities \eqref{eq:thm5.1} and \eqref{eq:thm5.2} hold by replacing $\beta$ by $2$.
\end{remark}
\begin{remark} The main difference between the results presented above and those of \cite{tsybakov} is that the logarithmic factor is replaced by the Lambert function  $\mathcal{W}$. This modification slightly improves the bound \textcolor{black}{by using (\ref{eq:majoration_W_log}) of Lemma  \ref{lemma: proporties lambert function}} and recovers the $\log$ at the first order, since
%The structure of the bounds is identical to the ones in \cite{tsybakov}, except that the logarithmic factor is replaced by $\mathcal{W}(n)$.
%Using 
the first-order Taylor expansion of the Lambert function at infinity is:
\begin{equation*}
\mathcal{W}(x)
= \log x
-
\log\log x
+
o(\log\log x).
\end{equation*}
% one can see that the bound is slightly improved and that we recover the $\log$ at the first order.
Moreover, Theorem \ref{thm: tsybakov} provides an upper bound for the quadratic risk of $z^{(N)}$ whatever the number of iterations $N$ and the number of observations $n$ which is just assumed to be greater than 3.
\end{remark}
}

\begin{theorem}\label{thm: real minimum} \textbf{About the minimum estimation}
{\color{black}Let  $\mathcal W $ and $\mathcal{W}_{-1}$ be the Lambert functions defined in \eqref{def: lambert functions}. Let $c_1$ and $h$ be the positive constants introduced in \eqref{eq: proba} and let 
\[
n \geq \max\left(-\frac{\mathcal{W}_{-1}(-c_1h^{d+2})}{c_1h^{d+2}}\mathds{1}_{\{c_1h^{d+2}< e^{-1}\}},3\right).
\]}
Assume that \eqref{ass: assumption K} -- \eqref{ass: distinguishable properties} hold and that {\color{black} $y_1,\dots, y_n$} satisfy Model \eqref{eq: regression model} {\color{black} where {\color{black}$\xi_1,\dots, \xi_n$} are} Gaussian random variables. 
{\color{black} Consider} the estimator $\widehat f^*$  of the minimum of $f$ on $\Theta$ defined in \eqref{def: estimation min}
{\color{black} using the {\color{black}kernel-}bandwidth $h_n$ and the regularization parameter $\lambda_n$ defined in \eqref{def: values hn et lambdan}.}
\textcolor{black}{Then, after a number of iterations }
\[
N > N_{n,\beta}\left(\left(\frac{n}{\mathcal{W}(n)}\right)^{\frac{2(\beta-1)}{2\beta+r}}\vert\Theta\vert^2\left(\frac{\alpha-1}{\alpha+1}\right)+1\right),
\]
{\color{black} with $N_{n,\beta}$ defined in \eqref{def: N beta}, }
\begin{equation*}
\mathbb{E}\left(\left \lvert f(x^*)-\widehat{f^*}\right\rvert \right)\leq B'\left(\frac{\mathcal{W}(n)}{n}\right)^{\frac{\beta}{2\beta +r}}, 
\end{equation*}
where $B'$ is a positive constant {\color{black} and $x^*\in \Theta$ is a minimizer of $f$}. 
\end{theorem}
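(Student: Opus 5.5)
We split the expectation according to the event $\{\hat J = J\}$ and its complement. Since $f(x^*)=g(z^*)$, on $\{\hat J=J\}$ we have $\hat W=W$, $\hat r=r$, $\hat z_{\hat W}=\hat z_W=z^{(N)}$ and $\widehat{f^*}=\hat g(z^{(N)})$. Hence
\[
\mathbb{E}\left(\left\lvert f(x^*)-\widehat{f^*}\right\rvert\right)\le \mathbb{E}\left(\left\lvert g(z^*)-\hat g(z^{(N)})\right\rvert\right)+\mathbb{E}\left(\left\lvert f(x^*)-\widehat{f^*}\right\rvert\mathds{1}_{\{\hat J\neq J\}}\right).
\]
For the second term we use Cauchy--Schwarz together with \eqref{eq: proba}. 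This reduces it to a bound on the second moment of $\widehat{f^*}$, uniform over the (finitely many) possible $\hat J$, times $\sqrt{c_0}\exp(c_0d/2)\exp(-c_1nh^{d+2}/2)$. The second moment of $\widehat{f^*}$ should be controllable because the regularization $\lambda_n I_S$ gives $\|B_{n,\lambda_n}^{-1}\|_{\mathrm{op}}\le \lambda_n^{-1}$. Together with the boundedness of $f$ on $\Theta'$ from \eqref{ass: bounded}, the Gaussian noise and the boundedness of $K$, this yields at most a polynomial factor in $n$. The constraint on $n$ involving $\mathcal{W}_{-1}$ is exactly what makes $\exp(-c_1nh^{d+2})$ beat such polynomial factors, as in the proof of Theorem \ref{thm:minimizer}. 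So this term is $O((\mathcal W(n)/n)^{\beta/(2\beta+r)})$.

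For the main term we split $g(z^*)-\hat g(z^{(N)}) = \bigl(g(z^*)-g(z^{(N)})\bigr)+\bigl(g(z^{(N)})-\hat g(z^{(N)})\bigr)$.

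\textbf{First piece.} Since $\nabla g(z^*)=0$ by \eqref{ass: min g} and the Hessian is bounded by \eqref{ass: hessienne}, a second-order Taylor expansion gives $|g(z^{(N)})-g(z^*)|\le C\|z^{(N)}-z^*\|^2$. Theorem \ref{thm: tsybakov} with the stated lower bound on $N$ (the same as in Theorem \ref{thm:minimizer}) then gives $\mathbb E\|z^{(N)}-z^*\|^2\lesssim (\mathcal W(n)/n)^{2(\beta-1)/(2\beta+r)}$. Since $2(\beta-1)\ge\beta$ for $\beta\ge2$, this is at most of order $(\mathcal W(n)/n)^{\beta/(2\beta+r)}$.

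\textbf{Second piece.} We need a uniform-in-$z$ bound $\mathbb{E}\sup_{z\in W\Theta}|\hat g(z)-g(z)|\lesssim (\mathcal W(n)/n)^{\beta/(2\beta+r)}$, because $z^{(N)}$ depends on the same sample. Pointwise, one writes $D_n(z)=B_n(z)\theta_g(z)+(\text{bias})+(\text{noise})$, where $\theta_g(z)$ is the vector of scaled derivatives of $g$ at $z$. Then
\[
\hat g(z)-g(z)=\widetilde M B_{n,\lambda_n}^{-1}\bigl(-\lambda_n\theta_g(z)+\text{bias}+\text{noise}\bigr).
\]
The three contributions are handled as follows:
\begin{itemize}
\item the regularization term is $O(\lambda_n)=O((\mathcal W(n)/n)^{\beta/(2\beta+r)})$ by the choice \eqref{def: values hn et lambdan};
\item the bias is $O(Lh_n^\beta)$ by \eqref{ass: holder}, provided the smallest eigenvalue of $B_n(z)$ is bounded below;
\item the noise term is of order $(nh_n^r)^{-1/2}$ up to the logarithmic factor needed for uniformity.
\end{itemize}
We control the lower eigenvalue of $B_n(z)$ uniformly on $W\Theta$ with high probability using \eqref{ass: density} and a matrix concentration inequality on a grid, extended via the Lipschitz property of $K$ from \eqref{ass: assumption K}. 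On the complementary small-probability event we fall back on the crude bound $\lambda_n^{-1}$. I expect these estimates are the ones already used in proving Theorem \ref{thm: tsybakov} for $\widehat{\nabla g}$, with $M$ replaced by $\widetilde M$ and without the $h_n^{-1}$ factor. This is why the rate improves from $h_n^{\beta-1}$ to $h_n^\beta$.

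The main obstacle is this uniform control of the noise term. The supremum over $W\Theta$ of the Gaussian/sub-Gaussian process $z\mapsto (nh_n^r)^{-1}\sum_i U K\,\xi_i$ costs a factor $\sqrt{\log n}$. The choice of $\mathcal W(n)$, which satisfies $\mathcal W(n)e^{\mathcal W(n)}=n$, is precisely what makes $\sqrt{\mathcal W(n)/(nh_n^r)}=h_n^\beta$. So a chaining or union-bound argument over a grid of mesh $n^{-1}$ must be arranged to cost only $\sqrt{\mathcal W(n)}$ rather than $\sqrt{\log n}$. If that fails, I would instead exploit that the iterate is fixed given earlier data, or accept the bound through the lemmas used in Section \ref{section: proof tsybakov}.
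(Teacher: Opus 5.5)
On the event $\{\hat J=J\}$ your argument is essentially the paper's. You decompose through $g(z^{(N)})=f(\hat x)$ and bound $|g(z^{(N)})-g(z^*)|$ quadratically, combining this with the minimizer rate and $2(\beta-1)\ge\beta$. You then bound $\mathbb{E}\sup_{z\in W\Theta}|\hat g(z)-g(z)|$ by the $\hat g$-versions of the gradient bias/variance bounds ($M$ replaced by $\widetilde M$, no factor $h_n^{-1}$), which is exactly what Lemmas~\ref{lemma 11bis} and~\ref{lemma 12bis} provide.

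Your second-order Taylor expansion at $z^*$, using $\nabla g(z^*)=0$ and the Hessian bound, is slightly cleaner than the paper's route. The paper uses strong convexity plus a bound on $\|\nabla g(W\hat x)\|$, then Theorem~\ref{thm:minimizer}, whose bound already contains the $\hat J\neq J$ term. The two amount to the same thing.

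The obstacle you single out is not one. From $n=\mathcal W(n)e^{\mathcal W(n)}$ you get $\log n=\mathcal W(n)+\log\mathcal W(n)\le(1+e^{-1})\mathcal W(n)$. So the $\sqrt{\log n}$ cost of a union bound over a grid of polynomial size is already $O(\sqrt{\mathcal W(n)})$. Your other fallback would not work: every descent step uses the whole sample, so $z^{(N)}$ is not independent of the data entering $\hat g$. That dependence is why a uniform bound is needed in the first place.

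Where you genuinely depart from the paper is the event $\{\hat J\neq J\}$. The paper bounds $|f(\hat x)-\hat g_{\hat W}(\hat z_{\hat W})|$ there by a constant $c$, appealing to continuity, and multiplies by $\mathbb{P}(\hat J\neq J)$. You use Cauchy--Schwarz with a second-moment bound instead. Two of your steps do not hold as stated.

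First, the condition on $n$ involving $\mathcal W_{-1}$ does not make $e^{-c_1nh^{d+2}}$ beat polynomial factors. As the proof of Lemma~\ref{lemmaW} shows, it only gives $e^{-c_1nh^{d+2}}\le n^{-1}$, hence $n^{-1/2}$ after your square root. That does not compensate a factor such as $\lambda_n^{-1}h_n^{-r}=(n/\mathcal W(n))^{(\beta+r)/(2\beta+r)}$, which is what $\|B_{n,\lambda_n}^{-1}\|_{\mathrm{op}}\le\lambda_n^{-1}$ and $\|D_n(z)\|\lesssim h_n^{-r}n^{-1}\sum_i|y_i|$ give you. This is repairable: put $\sup_{n\ge 3}n^{k}e^{-c_1h^{d+2}n/2}(n/\mathcal W(n))^{\beta/(2\beta+r)}<\infty$ into $B'$, at the cost of $B'$ depending on $c_1h^{d+2}$.

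Second, the polynomial second-moment bound on $\widehat{f^*}$ does not follow from \eqref{ass: bounded}. If $\hat J\supseteq J$, every $x_i$ with nonzero kernel weight satisfies $f(x_i)=f(x')$ for some $x'\in\Theta'$, so $f(x_i)$ stays bounded. If $\hat J$ misses an active variable, that coordinate of such $x_i$ is unconstrained. You then need an integrability condition such as $\mathbb{E}f(x_1)^2<\infty$, which the assumptions (bounded density, $f$ bounded only on $\Theta'$) do not provide.

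The paper's proof has the same hole in cruder form: its deterministic constant $c$ bounding the random function $\hat g_{\hat W}$ is never justified. Your version is more careful, but as written it still needs that extra assumption.
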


This {\color{black} theorem} is proved in Section \ref{section: thm real minimum proof}.

%\textcolor{black}{Il faut parler de l'optimalité de la vitesse ???}

\textcolor{black}{As for the previous theorem we note that, based on the bounds established by \cite{tsybakov}, the upper bound obtained in Theorem \ref{thm: real minimum} is optimal up to a factor smaller than a logarithmic term. Moreover, we observe that we obtain the same rates of convergence as if the active variables on which $f$ actually depends were known beforehand, which is obviously not the case in practice. We thus slightly improve the results of \cite{tsybakov} in a more challenging context.}
\section{Proofs}\label{sec:proofs}

\subsection{Proof of Theorem \ref{thm: tsybakov}}\label{section: proof tsybakov}
By definition of the Algorithm \eqref{eq: gradient descent} and the contracting property of the Euclidean projection onto a convex set, if $N$ is the total number of iterations, we have for $k \in \{1,...,N-1\}$.
\begin{align*}
\mathbb{E}(\lVert z^{(k+1)} - z^* \rVert^2 ) = & \mathbb{E}\left(\lVert\text{Proj}_{W\Theta}\left( z^{(k)} -\eta_k \widehat{\nabla g}(z^{(k)})\right) - z^* \rVert^2 \right)\\
\leq & \mathbb{E}\left(\lVert z^{(k)} -\eta_k \widehat{\nabla g}(z^{(k)}) - z^* \rVert^2\right)\\\
= &{\color{black} \mathbb{E}\left(\lVert z^{(k)} - z^* \rVert^2\right)- 2\eta_k \mathbb{E}\left((z^{(k)} - z^*)^{\!\top}  \widehat{\nabla g}(z^{(k)}) \right)+\eta_k^2\mathbb{E}\left(\lVert  \widehat{\nabla g}(z^{(k)}) \rVert^2 \right).}
\end{align*}
%
%Thus, 
%\begin{align*}
%\mathbb{E}(\lVert z^{(k+1)} - z^* \rVert^2) \leq  \mathbb{E}\left(\lVert z^{(k)} - z^* \rVert^2\right)- 2\eta_k \mathbb{E}\left((z^{(k)} - z^*)^{\!\top}  \widehat{\nabla g}(z^{(k)}) \right)+\eta_k^2\mathbb{E}\left(\lVert  \widehat{\nabla g}(z^{(k)}) \rVert^2 \right).
%\end{align*}

Using the Cauchy-Schwarz inequality, we get 
\begin{equation*}
-\mathbb{E}\left[(z^{(k)} - z^*)^{\!\top}  \widehat{\nabla g}(z^{(k)}) \right] \leq  -\mathbb{E}\left[(z^{(k)}-z^*)^{\!\top} \nabla g (z^{(k)}) \right]+\mathbb{E}\left(\lVert z^{(k)} - z^* \rVert \lVert \widehat{\nabla g}(z^{(k)}) - \nabla g(z^{(k)}) \rVert\right).
\end{equation*}

Using the fact that $g$ is $\alpha$-strongly convex by \eqref{ass: strong convexity} of \eqref{ass: regularity f} we have
\begin{equation*}
g(z^*) -  g(z^{(k)}) \geq (z^* -  z^{(k)})^{\!\top} \nabla g (z^{(k)}) + \frac{\alpha}{2} \Vert z^* - z^{(k)} \Vert^2.
\end{equation*}

As $z^*$ minimizes $g$ we get 
\begin{equation*}
(z^{(k)}-z^*)^{\!\top} \nabla g (z^{(k)}) \geq \frac{\alpha}{2} \Vert z^* - z^{(k)} \Vert^2
\end{equation*}

and thus 
\begin{equation*}
-\mathbb{E}\left[(z^{(k)}-z^*)^{\!\top} \nabla g (z^{(k)})\right] \leq -\frac{\alpha}{2}\mathbb{E}\left[ \Vert z^* - z^{(k)} \Vert^2\right].
\end{equation*}

Hence,
\begin{align*}
\mathbb{E}(\lVert z^{(k+1)} - z^* \rVert^2)& \leq  (1 - \alpha \eta_k)\mathbb{E}\left(\lVert z^{(k)} - z^* \rVert^2\right) +2\eta_k \mathbb{E}\left[\lVert z^{(k)} - z^* \rVert \lVert \widehat{\nabla g}(z^{(k)}) - \nabla g(z^{(k)}) \rVert\right] \\
& + \eta_k^2\mathbb{E}\left[\lVert  \widehat{\nabla g}(z^{(k)}) \rVert^2 \right].
\end{align*}

Since for all $a,b$ in $\mathbb{R}$ and  $\gamma > 0$, $2ab \leq \gamma a^2+\frac{1}{\gamma}b^2$ we get 

\begin{align*}
\mathbb{E}(\lVert z^{(k+1)} - z^* \rVert^2 )
 & \leq  (1 - \alpha \eta_k)\mathbb{E}\left(\lVert z^{(k)} - z^* \rVert^2\right)
+\gamma\eta_k \mathbb{E}\left(\lVert z^{(k)} - z^* \rVert^2\right) \\ & \qquad + \frac{\eta_k}{\gamma} \mathbb{E}\left(\lVert \widehat{\nabla g}( z^{(k)}) - \nabla g(z^{(k)}) \rVert^2\right)  + \eta_k^2\mathbb{E}\left(\lVert  \widehat{\nabla g}(z^{(k)}) \rVert^2 \right)\\
& \leq  (1 - (\alpha-\gamma)\eta_k)\mathbb{E}\left(\lVert z^{(k)} - z^* \rVert^2\right) \\
&\qquad + \frac{\eta_k}{\gamma} \mathbb{E}\left(\lVert \widehat{\nabla g}( z^{(k)}) - \nabla g(z^{(k)}) \rVert^2\right)
+ \eta_k^2\mathbb{E}\left[\lVert  \widehat{\nabla g}(z^{(k)}) \rVert^2 \right].
\end{align*}

Denoting $r_k = \mathbb{E}(\Vert z^{(k)} - z^* \Vert^2)$ and taking the supremum over $W\Theta$ in the previous inequality we have

\begin{equation}\label{rk+1}
r_{k+1} \leq  (1 - (\alpha-\gamma)\eta_k)r_{k}
+ \frac{\eta_k}{\gamma}
\mathbb{E}\left(\sup_{z \in W\Theta}\lVert \widehat{\nabla g}(z) - \nabla g(z) \rVert^2\right) + \eta_k^2
\mathbb{E}\left(\sup_{z \in W\Theta}\lVert  \widehat{\nabla g}(z) \rVert^2 \right).
\end{equation}

%Moreover, we have
{\color{black} The second term on the right-hand side can be bounded by}
\begin{align*}
\mathbb{E}\left[\sup_{z\in W\Theta} \lVert  \widehat{\nabla g}(z)-\nabla g(z)\rVert^2\right]
&\leq
2\mathbb{E}
\left[
\sup_{z\in W\Theta}
\left(
\lVert \widehat{\nabla g}(z)-\mathbb{E}\widehat{\nabla g}(z)\rVert^2
\right)
\right]+
2\mathbb{E}
\left[
\sup_{z\in W\Theta}
\left(
\lVert
\mathbb{E}\widehat{\nabla g}(z)-\nabla g(z)
\rVert^2
\right)
\right].
\end{align*}
%which can be seen as decomposition of the error in terms of bias and variance.

\textcolor{black}{Since \eqref{ass: assumption K} -- \eqref{ass: density} hold,} {\color{black} according to the definition of $h_n$ and $\lambda_n$ in \eqref{def: values hn et lambdan}, we have that $nh_n^r= \lambda_n^{-2}\mathcal{W}(n)$. Moreover, by (\ref{eq:croissance W}) {\color{black}of Lemma \ref{lemma: proporties lambert function}}, $nh_n^r \geq\mathcal{W}(n)$. Thus, $nh_n^r \geq \max(\mathcal{W}(n),\lambda_n^{-2}\mathcal{W}(n))$ so we can use Lemma \ref{lemma 12} to bound the first term of the above inequality and Lemma \ref{lemma 11} for the second one and hence obtain}
%Hence, we get
%{\color{black} Using the fact that, by (\ref{eq:minoration_W}), $\mathcal W(n) \geq 1$ for $n\geq3$,} we get 
\begin{align*}
\mathbb{E}\left[\sup_{z\in W\Theta}
\lVert \widehat{\nabla g}(z)-\nabla g(z)\rVert^2
\right]
&\leq
2A_2h_n^{-r-2}n^{-1}\mathcal{W}(n)
+2A_1^2
\left(
h_n^{\beta-1}
+h_n^{-1}\lambda_n
+h_n^{-1-r/2}n^{-1/2}
\right)^2
\\
&\leq
2A_2h_n^{-r-2}n^{-1}\mathcal{W}(n)
+
4A_1^2
\left(
h_n^{2(\beta-1)}
+h_n^{-2}\lambda_n^2
+h_n^{-2-r}n^{-1}
\right).
\end{align*}
\textcolor{black}{By (\ref{eq:minoration_W}) {\color{black}of Lemma \ref{lemma: proporties lambert function}}, we thus get that
$$\mathbb{E}\left[\sup_{z\in W\Theta}
\lVert \widehat{\nabla g}(z)-\nabla g(z)\rVert^2
\right]\leq
{\color{black}A'}
\left(
h_n^{-r-2}n^{-1}\mathcal{W}(n)
+
h_n^{2(\beta-1)}
+
h_n^{-2}\lambda_n^2
\right),
$$}
%{\color{magenta} [je ne suis pas sure que la constante soit exacte]} 
\textcolor{black}{where $A'=2\max\left(2A_2,4A_1^2\right)$,}
% {\color{magenta}[\`a enlever?]}{\color{gray} Note that balancing the terms leads to
% \begin{equation*}h_n =  \left(\frac{\mathcal{W}(n)}{n}\right)^{\frac{1}{2\beta+r}} \text{ and } \lambda_n=  \left(\frac{\mathcal{W}(n)}{n}\right)^{\frac{\beta}{2\beta+r}}.
% \end{equation*}
% }
% %Note that for these values we have $nh_n^r = \lambda^{-2}\mathcal{W}(n)$ and $nh_n^r \geq \mathcal{W}(n)$ so the assumption made to apply Lemma \ref{lemma 11} and Lemma \ref{lemma 12} is satisfied.
% {\color{magenta} Hence, defining} $A' = 2\max\left(A_2,2A_1^2\right)$
\textcolor{black}{which, by (\ref{def: values hn et lambdan}), gives}
\begin{equation}\label{eq: decomposition g}
\mathbb{E}
\left[
\sup_{z\in W\Theta}
\lVert \widehat{\nabla g}(z)-\nabla g(z)\rVert^2
\right]
\leq
\textcolor{black}{3A'}
\left(
\frac{\mathcal W(n)}{n}
\right)^{\frac{2(\beta-1)}{2\beta+r}}.
\end{equation}
\textcolor{black}{Using this bound in \eqref{rk+1} gives}
\begin{equation}\label{eq: recurrence rk}
r_{k+1}
\leq
(1-(\alpha-\gamma)\eta_k)r_k
+
\textcolor{black}{3}\frac{\eta_k}{\gamma}
A'
\left(
\frac{\mathcal W(n)}{n}
\right)^{\frac{2(\beta-1)}{2\beta+r}}
+
\eta_k^2
\mathbb E
\left[
\sup_{z\in W\Theta}
\lVert \widehat{\nabla g}(z)\rVert^2
\right].
\end{equation}
{\color{black} Observe that}, by \eqref{ass: holder} of \eqref{ass: regularity f}, $\lVert\nabla f\rVert$ is uniformly bounded on $\Theta$ which implies that $\Vert \nabla g\Vert$ is uniformly bounded on $W\Theta$. \textcolor{black}{Thus, by \eqref{eq: decomposition g} and (\ref{eq:croissance W}), we get}
\begin{equation*}
\mathbb E
\left[
\sup_{z\in W\Theta}
\lVert \widehat{\nabla g}(z)\rVert^2
\right]
\leq
\mathbb E
\left[
\sup_{z\in W\Theta}
\lVert\nabla g(z)\rVert^2
\right]
+
\mathbb E
\left[
\sup_{z\in W\Theta}
\lVert
\widehat{\nabla g}(z)-\nabla g(z)
\rVert^2
\right] \leq A'',
\end{equation*}
%we get 
%\begin{equation*}
%\mathbb E
%\left[
%\sup_{z\in W\Theta}
%\lVert \widehat{\nabla g}(z)\rVert^2
%\right]
%\leq A'',
%\end{equation*}
for some positive constant $A''$. %{\color{magenta}[ajouter une ligne pour expliquer comment on obtient une constante?]}

{\color{black} Since by (\ref{ass: strong convexity}) of (\ref{ass: regularity f}), $\alpha>1$,} we {\color{black} take $\gamma = (\alpha-1)/2 >0$} {\color{black} in (\ref{rk+1})}. 
%and have $\gamma>0$ since $\alpha >1$ by \eqref{ass: strong convexity} of \eqref{ass: regularity f}. 
{\color{black} Since $\gamma \leq \alpha-1$, $1-(\alpha-\gamma)\eta_k\leq 1-\eta_k$ hence,  for $\eta_k = 1/k$,} \eqref{eq: recurrence rk} becomes
\begin{equation}\label{rk+1_bis}
r_{k+1} \leq  \left(1 - \frac{1}{k}\right)r_k+ \frac{2}{k(\alpha-1)} A\left(\frac{\mathcal{W}(n)}{n}\right)^{\frac{2(\beta-1)}{2\beta+r}}  + \frac{1}{k^2}A,
\end{equation}
where \textcolor{black}{$A = \max(3A',A'')$.}
\textcolor{black}{We will consider two cases: $k\geq\left(\frac{n}{\mathcal{W}(n)}\right)^{\frac{2(\beta-1)}{2\beta+r}}$ and $k<\left(\frac{n}{\mathcal{W}(n)}\right)^{\frac{2(\beta-1)}{2\beta+r}}$.}
%which of the term dominates the other.

{\color{black} Let us first consider the case where $k \geq \left(\frac{n}{\mathcal{W}(n)}\right)^{\frac{2(\beta-1)}{2\beta+r}}$. Then,}
\begin{equation*}
r_{k+1} \leq  \left(1 - \frac{1}{k}\right)r_k+\frac{\alpha+1}{ k(\alpha-1)}  A\left(\frac{\mathcal{W}(n)}{n}\right)^{\frac{2(\beta-1)}{2\beta+r}}.
\end{equation*}
Denoting $a =\frac{\alpha+1}{\alpha-1}A\left(\frac{\mathcal{W}(n)}{n}\right)^{\frac{2(\beta-1)}{2\beta+r}}$, \textcolor{black}{the previous inequality can be rewritten as follows for all $k \geq \textcolor{black}{N_{n,\beta}}$:}
\begin{equation*}
r_{k+1} \leq  \left(1 - \frac{1}{k}\right)r_k+ \frac{a}{k},
\end{equation*}
which, by induction, implies
\begin{equation*}
r_{k+1} \leq r_{N_{n,\beta}}\prod_{i=\textcolor{black}{N_{n,\beta}}}^k \left(1-\frac{1}{i}\right)+ \frac{k-\textcolor{black}{N_{n,\beta}}+1}{k}a, 
\end{equation*}
{\color{black}where} $r_{\textcolor{black}{N_{n,\beta}}} = \mathbb{E}\left(\Vert \textcolor{black}{z^{(N_{n,\beta})}}-z^*\Vert^2\right)\leq \vert\Theta\vert^2$. \textcolor{black}{Hence we obtain that for all $k \geq N_{n,\beta}$,}
\begin{equation*}
\quad r_{k+1} \leq \frac{N_{n,\beta} -1}{k} \vert\Theta\vert^2 + \frac{k-N_{n,\beta}+1}{k}\left(\frac{\alpha+1}{\alpha-1}\right)A\left(\frac{\mathcal{W}(n)}{n}\right)^{\frac{2(\beta-1)}{2\beta+r}},
\end{equation*}
which leads to \eqref{eq:thm5.1} {\color{black} by taking $k=N-1\geq N_{n,\beta}$}. %{\color{magenta}[Attention: dans  \eqref{eq:thm5.1} il n'y a pas le carr\'e]}. 
%Note that as $N>N_{n,\beta}^0$ by assumption, we have $N-1 \geq N_{n,\beta}^0$ so the choise $k=N-1$ is valid here.
%
{\color{black} Let us now consider the case where $k < \left(\frac{n}{\mathcal{W}(n)}\right)^{\frac{2(\beta-1)}{2\beta+r}}$. Then,}
Inequality \eqref{rk+1_bis} becomes
\begin{equation*} 
r_{k+1} \leq  \left(1 - \frac{1}{k}\right)r_k+ \frac{\alpha+1}{k^2(\alpha-1)}A,
\end{equation*}
which, by induction, leads to 
\begin{equation*}
r_{k+1} \leq A\left(\frac{\alpha+1}{\alpha-1}\right)\frac{1}{k}\sum_{j=1}^k\frac{1}{j}.
\end{equation*}
\textcolor{black}{Since for all $k \in \mathbb{N^*}, \sum_{j=1}^k 1/j \leq 1+\log(k)$, we get:
\begin{equation*}
r_{k+1} \leq A\left(\frac{\alpha+1}{\alpha-1}\right)\frac{1+\text{log}(k)}{k},
\end{equation*}}
which leads to \eqref{eq:thm5.2} {\color{black} by taking $k=N-1< \left(\frac{n}{\mathcal{W}(n)}\right)^{\frac{2(\beta-1)}{2\beta+r}}$} \textcolor{black}{since $N\leq N_{n,\beta}$ implies $N<1+\left(\frac{n}{\mathcal{W}(n)}\right)^{\frac{2(\beta-1)}{2\beta+r}}$.} 
%\sout{Using the same argument as in the previous case, the choice of this value of $k$ is legitimate.}

\subsection{Proof of Theorem \ref{thm:minimizer}}\label{section: thm minimizer proof} ~

%We start by bounding the quadratic error defined in \eqref{def e}.

%{\color{magenta}[Ici j'ai enlev\'e le lemme (Lemma4.1) et je l'ai integr\'e  directement \`a la proof]}

%\begin{lemma}\label{lemma: bound}
Let $\hat{W}$ be {\color{black}the} estimator of $W$ obtained using the method described in \textcolor{black}{Section \ref{section: method} after (\ref{def : Jhat})}. \textcolor{black}{Let $x^*$} be a minimizer of $f$ and {\color{black} $\hat x$} its estimator defined in \eqref{def xhat}. Observe that $z^* = Wx^*$.
{\color{black} Let  also $\hat{z}_W $ be the estimator of $z^*$ defined in \eqref{hatz_W}.}
\textcolor{black}{Then, we} can decompose the quadratic error as
\begin{align*}
\mathbb{E}\left(\lVert W (x^*-\hat{x})\rVert^2 \right) &= \mathbb{E}\left(\left \lVert W x^*-W\hat x \right \rVert^2 \mathds{1}_{\hat{W} = W}\right) +\mathbb{E}\left(\left \lVert W x^*-W\hat x\right \rVert^2 \mathds{1}_{\hat{W}\neq W}\right) \\
& \leq \mathbb{E}\left(\left \lVert W x^*-\hat z_{W}\right \rVert^2\right) +\vert \Theta \vert^2 \mathbb{P}(\hat{W} \neq W),
\end{align*}
\textcolor{black}{where the last inequality comes from the fact that $W\hat x = \hat z_{W}$, by (\ref{def xhat}), and from the fact that $x^*$ and $\hat x$ are in $\Theta$ and $\Vert W \Vert_{\text{op}} \leq 1$.}

According to the definition of $W$ and $\hat{W}$, the equality $\hat{W}=W$ is equivalent to $\hat{J}=J$. {\color{black} By Lemma \ref{lemma : J}}
\begin{align}\label{eq:maj_x_star_x_hat}
\mathbb{E}\left(\lVert W (x^*-\hat{x})\rVert^2 \right)  \leq \mathbb{E}\left(\left \lVert W x^*-\hat z_{W}\right \rVert^2\right)
+ \vert \Theta \vert^2  c_0\exp(c_0d)\exp(-c_1n h^{d+2}).
\end{align}
%\sout{Note that the inequality \eqref{eq: holder norme 2} also holds for the $\ell_1$-norm $\Vert \cdot \Vert_1$.}

{\color{black} Recalling that,} by definition, $Wx^* =z^*$ minimizes $g$ on $W\Theta$, the term 
\[
\mathbb{E}\left(\left \lVert W x^*- \hat{z}_W \right \rVert^2\right) = \mathbb{E}\left(\lVert z^* - \hat{z}_W\rVert^2\right)
\] 
corresponds to the expected quadratic error of the minimization of $g$ using the regression model \eqref{eq: reg g} and the dataset $(z_i,y_i)_{1\leq i\leq n}$, {\color{black} where by definition $z_i = Wx_i$.}

Since, by {\color{black}Assumption} \textcolor{black}{(\ref{eq:minoration_N})}, \textcolor{black}{$N> N_{n,\beta}$}, we can apply Theorem \ref{thm: tsybakov} to get 
\begin{equation*}
  \mathbb{E}\left(\lVert z^* - \hat{z}_W \rVert^2\right) \leq \frac{\textcolor{black}{N_{n,\beta}} -1}{N-1} {\color{black}\vert\Theta\vert^2}
  + \frac{N-\textcolor{black}{N_{n,\beta}}}{N-1}A\left(\frac{\alpha+1}{\alpha-1}\right)\left(\frac{\mathcal{W}(n)}{n}\right)^{\frac{2(\beta-1)}{2\beta +r}}
\end{equation*}
so that, \textcolor{black}{by (\ref{eq:maj_x_star_x_hat}), we have}
\begin{align*}
\mathbb{E}\left(\lVert W (x^*-\hat{x})\rVert^2 \right) &\leq \frac{N_{n,\beta} -1}{N-1} {\color{black}\vert\Theta\vert^2}+ \frac{N-N_{n,\beta}}{N-1}A\left(\frac{\alpha+1}{\alpha-1}\right)\left(\frac{\mathcal{W}(n)}{n}\right)^{\frac{2(\beta-1)}{2\beta +r}}\\ &+\vert \Theta \vert ^2 c_0\exp(c_0d)\exp(-c_1n h^{d+2}).
\end{align*}

Since, \textcolor{black}{by {\color{black}Assumption} (\ref{eq:minoration_N})}, $N-N_{n,\beta} \geq N_{n,\beta}\left(\frac{n}{\mathcal{W}(n)}\right)^{\frac{2(\beta-1)}{2\beta+r}}\textcolor{black}{\vert\Theta\vert^2}\left(\frac{\alpha-1}{\alpha+1}\right)$,
%\sout{and $n \geq \mathcal{W}(n)$ (see Eq. \eqref{eq:croissance W} of Lemma \ref{lemma: proporties lambert function})},
the second term in the right-hand side is greater than the first one \textcolor{black}{multiplied by $A$. We thus get}
\begin{align*}
  \mathbb{E}\left(\lVert  W (x^*-\hat{x})\rVert^2 \right) &\leq (A+1)\textcolor{black}{\left(\frac{N-N_{n,\beta}}{N-1}\right)}\left(\frac{\alpha+1}{\alpha-1}\right)\left(\frac{\mathcal{W}(n)}{n}\right)^{\frac{2(\beta-1)}{2\beta +r}}\\
                                                          &+\vert \Theta \vert^2 c_0\exp(c_0d)\exp(-c_1n h^{d+2})\\
                                                          &\leq (A+1)\left(\frac{\alpha+1}{\alpha-1}\right)
                                                            \left(\frac{\mathcal{W}(n)}{n}\right)^{\frac{2(\beta-1)}{2\beta +r}}
                                                            +\vert \Theta \vert^2 c_0\exp(c_0d)\exp(-c_1n h^{d+2}),
\end{align*}
\textcolor{black}{since $N_{n,\beta}\geq 1$ by (\ref{eq:croissance W}) {\color{black}of Lemma \ref{lemma: proporties lambert function}}.}
%For $n$ large enough, the exponential term is smaller than the other one. {\color{magenta} More precisely}, the inequality
\textcolor{black}{Using Lemma \ref{lemmaW} \textcolor{black}{with $\gamma=2(\beta-1)/(2\beta + r)$}, we have}
\begin{equation}\label{eq: terme 1 domine}
\left(\frac{\mathcal{W}(n)}{n}\right)^{\frac{2(\beta-1)}{2\beta + r}}
\geq
\exp\left(-c_1 h^{d+2} n\right)
\end{equation}
\textcolor{black}{which is satisfied for all $n\geq 3$ if $ c_1 h^{d+2} \geq e^{-1} $ and for $n\geq -\frac{\mathcal{W}_{-1}(-c_1 h^{d+2})}{c_1 h^{d+2}}$ if $ c_1 h^{d+2} < e^{-1}$.}
%({\color{magenta} cfr} Section \ref{section: proof of 34} {\color{magenta} for more details}).
{\color{black} Choosing} $B = (A+1)\left(\frac{\alpha+1}{\alpha-1}\right)+\vert \Theta \vert^2 c_0\exp(c_0d)$ concludes the proof.

\subsection{Proof of Theorem \ref{thm: real minimum}}\label{section: thm real minimum proof}~

{\color{black} Using that, by \eqref{def: estimation min}, $\widehat{f^*} = \hat{g}_{\hat W}(\hat{z}_{\hat{W}})$, we have}
\begin{equation}\label{eq: decomposition minimum}
\mathbb{E}\left(\left \lvert f(x^*)-\widehat{f^*}\right\rvert \right)\leq \mathbb{E}\left(\left \lvert f(x^*)-f(\hat{x})\right\rvert \right)+\mathbb{E}\left(\left \lvert f(\hat{x})-\hat{g}_{\hat W}(\hat{z}_{\hat{W}})\right\rvert \right).
\end{equation} 
%{\color{magenta}[Ici j'ai enlev\'e le lemme (Lemma4.2) et je l'ai integr\'e  directement \`a la proof]}

{\color{black} Observe that, by definition of $g$, $\vert f(x^*)- f(\hat{x})  \vert  = \vert g( Wx^*)- g(W\hat{x})\vert$.}
% {\color{magenta} We first consider the} right hand side of Eq. \eqref{eq: decomposition minimum}. {\color{magenta} According to/ By definition of $g$, we have that 
% \[
%  \vert f(x^*)- f(\hat{x})  \vert  = \vert g( Wx^*)- g(W\hat{x})\vert
% \]
% }
By the strong convexity assumption \eqref{ass: strong convexity} of \eqref{ass: regularity f} and the fact that $Wx^*{\color{black}=z^*}$ minimizes $g$, we get 
\begin{align*}
0 \leq g(W\hat{x})- g(Wx^*) 
\leq 
(W\hat{x} - Wx^*)^{\!\top} \nabla g(W\hat{x}) 
- \frac{\alpha}{2} \Vert Wx^*- W\hat{x} \Vert^2,
\end{align*}
which, using {\color{black} the} Cauchy-Schwarz inequality, leads to 
\begin{equation*}
 \vert g(W\hat{x})- g(Wx^*)  \vert 
 \leq 
 \Vert W\hat{x}-Wx^* \Vert \cdot \Vert \nabla g(W\hat{x}) \Vert  
 +\frac{\alpha}{2} \Vert Wx^*-W\hat{x} \Vert^2.
\end{equation*}
Since by \eqref{ass: min g}, $\nabla g(Wx^*) = 0$, the fact that $\Vert W \Vert_{\text{op}} \leq 1$ and for all $x \in \mathbb{R}^d$, $f(x) = f(W^\top Wx)$, $\nabla f(x) = \nabla f(W^\top Wx)$ and $\nabla ^2 f(x) = \nabla^2f(W^\top Wx)$, we have
\begin{align*}
{\color{black}\Vert \nabla g (W\hat{x}) \Vert }
&= \Vert \nabla g (W\hat{x}) - \nabla g(Wx^*) \Vert
= \Vert W\nabla f (\hat{x}) - W\nabla f(x^*) \Vert\\
&\leq \Vert \nabla f (W^\top W \hat{x}) - \nabla f(W^\top W x^*) \Vert\\
& = \left \Vert \int_0^1 \nabla^2f(W^\top W\hat{x}+tW^\top W(x^*- \hat{x}))(W^\top W x^*-W^\top W \hat{x})dt\right \Vert \\
& = \left \Vert \int_0^1 \nabla^2f(\hat{x}+t(x^*- \hat{x}))(W^\top W x^*-W^\top W \hat{x})dt\right \Vert \\
& \leq \int_0^1 \left \Vert \nabla^2f(\hat{x}+t(x^*- \hat{x}))\right \Vert_{\text{op}}\Vert W^\top W(\hat{x}-x^*) \Vert dt 
 \leq \int_0^1 C\Vert W^\top W(\hat{x}-x^*) \Vert dt,
\end{align*}
 where the last inequality {\color{black} comes from \eqref{ass: hessienne} of \eqref{ass: regularity f} and the fact that $\hat{x}+t(x^*- \hat{x})$ is in $\Theta$ since $x^*$ and $\hat{x}$ both belong to $\Theta$ which is convex.} % $x^*$ and $\hat{x}$ belong to $\Theta$ and, since $\Vert \nabla^2 f\Vert_{\text{op}}$ is uniformly bounded {\color{magenta} on $\Theta$ by \eqref{ass: hessienne} of \eqref{ass: regularity f}}, \sout{there exists a positive constant $C$ such that} $\forall t \in [0,1],\left\Vert \nabla^2f(\hat{x}+t(x^*-\hat{x}))\right \Vert_{\text{op}} \leq C$. {\color{magenta} [$C$ est  la constante de \eqref{ass: regularity f} non ?]}
\textcolor{black}{Thus, using that $\Vert W^\top W(x^* - \hat{x})\Vert^2=\Vert W(x^* - \hat{x})\Vert^2$, we obtain that 
\begin{align*}
 \vert f(x^*)- f(\hat{x})  \vert  = \vert g( Wx^*)- g(W\hat{x})\vert 
  \leq \left(C+ \frac{\alpha}{2}\right) \Vert W(x^* - \hat{x})\Vert^2.
 \end{align*} }
{\color{black} Hence, by Theorem \ref{thm:minimizer}, we get} 
\begin{equation}\label{eq-lemma4.2}
\mathbb{E}\left(\vert f(x^*)-f(\hat{x}))\rvert \right)\leq \left(C+\frac{\alpha}{2}\right)B \left(\frac{\mathcal{W}(n)}{n}\right)^{\frac{2(\beta-1)}{2\beta+r}},
\end{equation}
\textcolor{black}{where $B$ is a positive constant.}
%\vskip20mm
%The next Lemma bounds the first term of the right hand side using the fact that by \eqref{ass: strong convexity} of \eqref{ass: regularity f} the function $g$ is $\alpha$-strongly convex.
%\begin{lemma}\label{thm:minimum}
%
%Assume that \eqref{ass: assumption K} -- \eqref{ass: distinguishable properties} hold and that the $y_i$'s satisfy Model \eqref{eq: regression model} such that the $\xi_i$'s are Gaussians. The estimator $\hat{x}$  of a minimizer of $f$ defined in \eqref{def xhat}  after a number of iterations 
%$N \geq \left(\frac{n}{\mathcal{W}(n)}\right)^{\frac{4(\beta-1)}{2\beta+r}}\left(\vert\Theta\vert\left(\frac{\alpha-1}{\alpha+1}\right)+1\right)+1$ and $n \geq -\frac{\mathcal{W}_{-1}(-c_1h^{d+2})}{c_1h^{d+2}}\mathds{1}_{\{ch^{d+2}< e^{-1}\}}$, $c_1$ and $h$ being positive constants introduced in \eqref{eq: proba}, satisfies
%\[
%\mathbb{E}\left(\vert f(x^*)-f(\hat{x}))\rvert \right)\leq \left(C+\frac{\alpha}{2}\right)B \left(\frac{\mathcal{W}(n)}{n}\right)^{\frac{2(\beta-1)}{2\beta+r}},
%\]
%where $C$ and $\alpha$ are postive constants defined in \eqref{ass: regularity f}, $B$ is the constant appearing in  Theorem \ref{thm:minimizer} and the values of $h_n$ and $\lambda_n$ are defined in \eqref{def: values hn et lambdan}.
% \end{lemma}

\textcolor{black}{Let us now consider} the second term of the right-hand side of \eqref{eq: decomposition minimum}.
\textcolor{black}{Since $f$ and $\hat g_{\hat W}$ are continuous, they are bounded on $\Theta$ and $\hat W\Theta$, respectively. Thus, there exists a positive
  constant $c$ such that
\begin{align*}
\mathbb{E}\left(\vert f(\hat{x}) - \hat g_{\hat W}(\hat z_{\hat W}) \vert \right) 
  &= \mathbb{E}\left( \left \vert f(\hat x) - \hat g_{\hat W}(\hat z_{\hat W})\; \right \vert \mathds{1}_{W=\hat W} \right) + \mathbb{E}\left( \left \vert f(\hat x) - \hat g_{\hat W} (\hat z_{\hat W})\right \vert \mathds{1}_{W \neq \hat W} \right)\\
 &\leq \mathbb{E}\left( \left \vert g(W\hat x) - \hat g(\hat z_W) \right \vert \mathds{1}_{W=\hat W}\right)
   + c\mathbb{P}(W \neq \hat W)\\
&\leq \mathbb{E}\left( \left \vert g(\hat z_W) - \hat g(\hat z_W) \right \vert^2 \right)^{\frac{1}{2}}
+ c\mathbb{P}(W \neq \hat W),
\end{align*}
where the last inequality comes from the Cauchy-Schwarz inequality. Thus, we get
$$
\mathbb{E}\left(\vert f(\hat{x}) - \hat g_{\hat W}(\hat z_{\hat W}) \vert \right)
\leq \mathbb{E}\left[\sup_{z \in W\Theta} | \hat{g}(z) - g(z) |^2 \right] ^{\frac{1}{2}}
+ c\mathbb{P}(W \neq \hat W).
$$
}

%Using {\color{black}the} Cauchy-Schwarz inequality and the fact that $f$ and $\hat g_{\hat W}$ are respectively bounded on $\Theta$ and $\hat W\Theta$, we have, for a constant $c>0$: \textcolor{black}{comment peut-on être sûr que $\hat g_{\hat W}$ est bornée ?}
% \begin{align*}
% \mathbb{E}\left(\vert f(\hat{x}) - \hat g_{\hat W}(\hat z_{\hat W}) \vert \right) 
% &= \mathbb{E}\left( \left \vert f(\hat x) - \hat g_{\hat W}(\hat z_{\hat W})\; \right \vert \mathds{1}_{W=\hat W} \right) + \mathbb{E}\left( \left \vert f(\hat x) - \hat g_{\hat W} (\hat z_{\hat W})\right \vert \mathds{1}_{W \neq \hat W} \right).
% \end{align*}
% {\color{magenta} Recall that by definition of $f(\hat x) = g(W \hat x) = g(\hat z_W)$ and that, when $W=\hat W$, the estimator $\hat g_{\hat W}$ of $g$ is equal to $\hat g$, defined in Eq. \eqref{def: estimation min}.} Then, 
% \begin{align*}
% \mathbb{E}\left(\vert f(\hat{x}) - \hat g_{\hat W}(\hat z_{\hat W}) \vert \right) &= \mathbb{E}\left( \left \vert g(W\hat x) - \hat g(\hat z_W) \right \vert \mathds{1}_{W=\hat W}\right)
% + c\mathbb{P}(W \neq \hat W)\\
% &= \mathbb{E}\left( \left \vert g(\hat z_W) - \hat g(\hat z_W) \right \vert \right)
% + c\mathbb{P}(W \neq \hat W)\\
% & \leq \mathbb{E}\left( \left \vert g(\hat z_W) - \hat g(\hat z_W) \right \vert^2 \right)^{\frac{1}{2}}
% + c\mathbb{P}(W \neq \hat W)\\
% & \leq \mathbb{E}\left[\sup_{z \in W\Theta} | \hat{g}(z) - g(z) |^2 \right] ^{\frac{1}{2}}
% + c\mathbb{P}(W \neq \hat W).
% \end{align*}
%where we used $W\hat x = \hat z_{\hat W}$ by definition. 

According to {\color{black} the} definition of the estimator $\hat{W}$, the equality $\hat{W}=W$ is equivalent to $\hat{J}=J$.
%\textcolor{black}{A QUEL ENDROIT A-T-ON BESOIN DE CELA ?}
%Note that the {\color{magenta}definition of $\beta$-H\"older functions in} \eqref{eq: holder norme 2} also holds for the $\ell_1$-norm $\Vert \cdot \Vert_1$. {\color{magenta}[on en a besoin?]}
\textcolor{black}{Thus, by Lemma \ref{lemma : J}, we have}
\begin{align*}
\mathbb{E}\left(\vert f(\hat{x}) - \hat g_{\hat W}(\hat z_{\color{black}\hat W}) \vert \right)  \leq \mathbb{E}\left[\sup_{z \in W\Theta} | \hat{g}(z) - g(z) |^2 \right] ^{\frac{1}{2}}
+ c c_0\exp(c_0d)\exp(-c_1n h^{d+2}),
\end{align*}
%textcolor{black}{attention il n'y a pas de diametre de theta au carré pour le 2ème terme.}

%for $h$ the bandwidth used in \eqref{def: thetahat} and both $c_0$ and $c_1$ being positive constants.

% {\color{magenta} Remark that, by definition, $h_n$ and $\lambda_n$ in Eq. \eqref{def: values hn et lambdan} satisfy the assumptions of Lemma \ref{lemma 11bis} and Lemma \ref{lemma 12bis} [besoin de le dire?].
% Hence, using Lemma \ref{lemma 11bis} and Lemma \ref{lemma 12bis} and the fact that  $\mathcal{W}(n)\geq 1$ for $n\geq 3$. 
% we get}

\textcolor{black}{According to the definition of $h_n$ and $\lambda_n$ in \eqref{def: values hn et lambdan}, we have that $nh_n^r= \lambda_n^{-2}\mathcal{W}(n)$. Moreover, by (\ref{eq:croissance W}) {\color{black}of Lemma \ref{lemma: proporties lambert function}}, $nh_n^r \geq\mathcal{W}(n)$. Thus, $nh_n^r \geq \max(\mathcal{W}(n),\lambda_n^{-2}\mathcal{W}(n))$ and hence Lemmas \ref{lemma 11bis} and \ref{lemma 12bis} can be used to give}
\begin{align*}
\mathbb{E}\left[\sup_{z \in W\Theta} | \hat{g}(z) - g(z) |^2 \right] 
&\leq \mathbb{E}\left[ 2\sup_{z \in W\Theta} | \hat{g}(z) - \mathbb{E}\hat{g}(z)|^2 + 2\sup_{z \in W\Theta} | \mathbb{E}\hat{g}(z) - g(z) |^2 \right]\\
&\leq 2A_2 h_n^{-r} n^{-1} \mathcal{W}(n) + 2A_1^2\left(h_n^{\beta}+\lambda_n+h_n^{-\frac{r}{2}}n^{-\frac{1}{2}}\right)^2 \\
&\leq 2A_2 h_n^{-r} n^{-1} \mathcal{W}(n) + \textcolor{black}{6A_1^2}\left(h_n^{2\beta}+\lambda_n^2+h_n^{-r}n^{-1}\right) \\
&\leq A\left(h_n^{-r} n^{-1} \mathcal{W}(n) + h_n^{2\beta}+\lambda_n^2\right),
\end{align*}
for $A=2A_2+\textcolor{black}{6A_1^2}>0$, \textcolor{black}{where we used (\ref{eq:minoration_W}) {\color{black}of Lemma \ref{lemma: proporties lambert function}} for obtaining the last inequality.} 
{\color{black} By replacing the values of $h_n$ and $\lambda_n$ given in  \eqref{def: values hn et lambdan} we get}
%\begin{equation}\label{eq:lemma13.1}
\[
\mathbb{E} \left[\sup_{z \in W\Theta} | \hat{g}(z) - g(z) |^2 \right]  
\leq A \left(\frac{\mathcal{W}(n)}{n}\right)^{\frac{2\beta}{2\beta+r}}.
\]
%\end{equation}
{\color{black} Hence, the second term of \eqref{eq: decomposition minimum} is bounded by}
$$
\mathbb{E}\left(\vert f(\hat{x}) - \hat g_{\hat W}(\hat z_{\color{black}\hat W}) \vert \right)  \leq \sqrt{A} \left(\frac{\mathcal{W}(n)}{n}\right)^{\frac{\beta}{2\beta+r}}
+ \textcolor{black}{c  c_0\exp(c_0d)\exp(-c_1n h^{d+2})}.
$$
%Note that for $n$ large enough, the exponential term is smaller than the other one. More precisely, the inequality
\textcolor{black}{By Lemma \ref{lemmaW} with $\gamma=\beta/(2\beta+r)$, we have that}
\begin{equation}\label{eq: terme 1 domine bis}
\exp\left(-c_1 h^{d+2} n\right) \leq \left(\frac{\mathcal{W}(n)}{n}\right)^{\frac{\beta}{2\beta + r}}
\end{equation}
for all $n\geq3$ if $ c_1 h^{d+2} \geq e^{-1} $ and for $n\geq -\frac{\mathcal{W}_{-1}(-c_1 h^{d+2})}{c_1 h^{d+2}}$ if $ c_1 h^{d+2} < e^{-1}$.
Thus, \eqref{eq: terme 1 domine bis} holds for $n\geq \max\left(-\frac{\mathcal{W}_{-1}(-c_1 h^{d+2})}{c_1 h^{d+2}}\mathds{1}_{\{c_1 h^{d+2} < e^{-1}\}},3\right)$
and we have
\begin{equation}\label{eq: condition n}
\mathbb{E}\left(\vert f(\hat{x}) - \hat g_{\hat W}(\hat z_{\color{black}\hat W}) \vert \right) \leq B''\left(\frac{\mathcal{W}(n)}{n}\right)^{\frac{\beta}{2\beta +r}},
\end{equation}
where \textcolor{black}{$B'' = \sqrt{A}+cc_0\exp(c_0d)$} is a positive constant.
\textcolor{black}{Combining \eqref{eq: condition n} with \eqref{eq-lemma4.2} for} $n\geq \max\left(-\frac{\mathcal{W}_{-1}(-c_1 h^{d+2})}{c_1 h^{d+2}}\mathds{1}_{\{c_1 h^{d+2} < e^{-1}\}},3\right)$, \eqref{eq: decomposition minimum} becomes
\begin{align*}
\mathbb{E}\left(\left \lvert f(x^*)-\widehat{f^*}\right\rvert \right) &\leq B''\left(\frac{\mathcal{W}(n)}{n}\right)^{\frac{\beta}{2\beta +r}}+\left(C+\frac{\alpha}{2}\right)B\left(\frac{\mathcal{W}(n)}{n}\right)^{\frac{2(\beta-1)}{2\beta+r}}
\leq B'\left(\frac{\mathcal{W}(n)}{n}\right)^{\frac{\beta}{2\beta +r}},
\end{align*} 
\textcolor{black}{where $B' = 2\max\left(B'', \left(C+\frac{\alpha}{2}\right)B\right)$ and the last inequality comes from (\ref{eq:croissance W}) {\color{black}of Lemma \ref{lemma: proporties lambert function}} and the fact that $\beta \geq 2$.}
%{\color{magenta}[peut etre rappeler que $\mathcal W(n) \leq n$ pour $n\geq 0$?]}

\section{Technical lemmas}\label{section: technical lemmas}

\begin{lemma}\label{lemma : J}{\rm[Theorem 1, \cite{J}]}
{\color{black} Assume that  \eqref{ass: regularity f},  \eqref{ass: density} and \eqref{ass: distinguishable properties} hold and let $\xi_1,\dots, \xi_n$ be Gaussian random variables. Then,}
%Suppose that $f$ satisfies the regularity assumption \eqref{ass: regularity f} and the distinguishability assumption \eqref{ass: distinguishable properties}. Assume also that the $x_i$'s satisfy the density assumption \eqref{ass: density} and that the $\xi_i$'s are Gaussian random variables, then 
the subset $\hat{J} \subset \{1,...,d\}$ defined in \eqref{def : Jhat} with a bandwidth $h$ and a regularization parameter $\lambda$ such that
\begin{equation*}
0<h<\min\left(\frac{p_{m}}{32({\color{black}d}+1)M_K\tilde L};\eta\right) \quad \text{and}\quad \lambda = 8\sqrt{3 M_K}p_{M}Lh
\end{equation*}
satisfies, with probability {\color{black}larger} than $$1-c_0\exp(c_0{\color{black}d})\exp(-c_1nh^{{\color{black}d}+2}),$$ the equality $$\hat{J}=J,$$
\textcolor{black}{where $M_K$, $c_0$ and $c_1$ are positive constants.}
\end{lemma}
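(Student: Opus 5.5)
This lemma is Theorem 1 of \cite{J}, so the main step is to check that the present assumptions imply those of \cite{J} and then invoke that theorem. Still, I sketch how the argument goes, since the structure shows where the constraints on $h$ and $\lambda$ come from. The estimator \eqref{def: thetahat} is a weighted Lasso with design vectors $U((x_i-x_0)/h)$ restricted to $B_\infty(x_0,h)$. The target vector is $\theta(x_0)=(f(x_0),h\partial_1 f(x_0),\dots,h\partial_d f(x_0))$, whose support is $\{0\}\cup J$. I would prove support recovery with a primal--dual witness argument.

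\textbf{Step 1: fix the population Gram matrix.} Let
\[
\Psi=\mathbb{E}\bigl[h^{-d}U((x-x_0)/h)U((x-x_0)/h)^\top K^u((x-x_0)/h)\bigr].
\]
Using \eqref{ass: density}, namely $p_m\le p\le p_M$ on $B_\infty(x_0,\eta)$ with $h<\eta$ and the Lipschitz bound $|p(t)-p(x_0)|\le\tilde L h$, I would show that $\Psi$ is close to $p(x_0)\,\mathrm{diag}(1,1/3,\dots,1/3)$. The off-diagonal entries come only from the Lipschitz defect and are bounded by $\tilde L h$. The condition $h<p_m/(32(d+1)M_K\tilde L)$ then makes the off-diagonal part small relative to the diagonal in the $\ell_\infty$ operator sense. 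This gives both an irrepresentability-type condition and a lower bound on the restricted eigenvalue.

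\textbf{Step 2: concentration.} I would show that the empirical Gram matrix and the empirical score
\[
\frac{1}{nh^d}\sum_i U_i K^u_i\bigl(y_i-\theta(x_0)^\top U_i\bigr)
\]
concentrate at the appropriate levels. The Gram entries are bounded variables, so Bernstein's inequality applies. For the score, I would split it into a noise part, which is conditionally Gaussian, and a bias part. By \eqref{ass: holder} with $\ell=1$, the bias is at most $L\|x_i-x_0\|^\beta\lesssim L h^{2}$ on the kernel support, for $h\le 1$. A union bound over the $d+1$ coordinates and entries produces the factor $c_0e^{c_0 d}$. Each tail is of order $\exp(-c\,nh^d t^2)$, and choosing $t\asymp h$ gives the rate $\exp(-c_1 nh^{d+2})$.

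\textbf{Step 3: witness construction.} On the resulting event, I would solve the Lasso restricted to the support $\{0\}\cup J$ and verify strict dual feasibility off the support. Here the choice $\lambda=8\sqrt{3M_K}p_M L h$ dominates the bias-plus-noise score, which is of order $Lh$. I would also check that the restricted solution differs from $\theta(x_0)$ by at most a constant times $\lambda/p_m$ in $\ell_\infty$. Assumption \eqref{ass: distinguishable properties} then gives $|h\partial_j f(x_0)|\ge Ch$ for $j\in J$ and $|f(x_0)|>Ch$. The constant $C\ge1512(p_M/p_m)L\sqrt r$ is exactly what makes these entries exceed the estimation error, so no coordinate in $J$ is zeroed. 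Together with dual feasibility this gives $\hat J=J$.

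\textbf{Main obstacle.} The delicate step is Step 3: tracking the explicit constants so that the threshold $1512(p_M/p_m)L\sqrt r$ and the factor $8\sqrt{3M_K}$ actually close the argument. This is where the $\sqrt r$ enters, through passing from the $\ell_2$ error to the $\ell_\infty$ error on the support. Since these constants are precisely those of \cite{J}, I would rely on that paper's computation rather than redo it.
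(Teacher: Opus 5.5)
Your route is the paper's: the lemma is a direct application of Theorem 1 of \cite{J}, and the proof consists only in checking that its hypotheses hold here. But you never carry out that check, even though you call it the main step. The primal--dual witness sketch does not replace it, because you defer all the constant-tracking (the factor $8\sqrt{3M_K}$ and the threshold $1512(p_M/p_m)L\sqrt r$) back to \cite{J} anyway.

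Concretely, the paper verifies two hypotheses of \cite{J}.
\begin{itemize}
\item The kernel assumption of \cite{J} (its Assumption 2) holds for the uniform kernel $K^u$ of \eqref{Ku} used in \eqref{def: thetahat}, with $M_K=21$. This is what gives the constant $M_K$ in the conditions on $h$ and $\lambda$ its meaning. In your sketch, $M_K$ appears in the bandwidth condition but is never identified.
\item The smoothness assumption of \cite{J} (its Assumption 5) is a H\"older condition measured in $\|\cdot\|_1$. It follows from \eqref{ass: holder} of \eqref{ass: regularity f} with the same constant $L$, because $\|x-x'\|^\beta\le\|x-x'\|_1^\beta$. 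This is why the same $L$ can appear in $\lambda$ and in $C$.
\end{itemize}

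A smaller point: in Step 2 you invoke \eqref{ass: holder} ``with $\ell=1$''. That assumption is stated with $\ell=\lfloor\beta\rfloor\ge 2$ and the Euclidean norm, so a first-order remainder bound $L\|x_i-x_0\|^\beta$ is not literally an instance of it. Matching the present assumptions to those of \cite{J} is exactly what has to be made explicit, not absorbed into a sketch.

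Your Gram-matrix computation is correct: for the uniform kernel, $\Psi\approx p(x_0)\,\mathrm{diag}(1,1/3,\dots,1/3)$, with defects of order $\tilde L h$. It is a reasonable heuristic for where the constraint on $h$ comes from. Once the two hypothesis checks above are made, however, it is not needed for the lemma.
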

%{\color{magenta}[est ce normal d'avoir $r$ et $d$ ?]}\textcolor{black}{je suis d'accord : pour moi c'est $d$ partout.}
\begin{proof}
  {\color{black} Assumption 2 of Theorem 1 of \cite{J} is satisfied for the uniform kernel $K^u$ on $B_\infty(0,1)$ {\color{black} defined in \eqref{Ku}} with $M_K=21$. Assumption 5 of this theorem is also valid thanks to \eqref{ass: regularity f} since $\|x-x'\|^{\beta}\leq {\|x-x'\|_1}^{\beta}$. Hence, all the assumptions of Theorem 1 of \cite{J} are satisfied, which concludes the proof.}
  %In this case, the constant  $M_K$ {\color{black} is replaced} by 1. \textcolor{black}{A VERIFIER}
%This Lemma is Theorem 1 of \citep{J} with the uniform kernel $K^u$ on $B_\infty(0,1)$ replacing the constant $M_K$ by 1, which satifies the needed assumption.
\end{proof}

\begin{lemma}\label{lemma : g holderienne}
{\color{black} Let $f$ be a function satisfying the $\beta$-H\"older condition  \eqref{ass: holder} of \eqref{ass: regularity f}.} Then the function $g$ defined in \eqref{def: g} is a $\beta$-H\"older function of $\mathbb{R}^r$.
%The $\beta$-Hölder assumption \eqref{ass: holder} of \eqref{ass: regularity f} on $f$ appearing in Eq. \eqref{eq: regression model} implies that $g$ defined in \eqref{def: g} is a $\beta$-Hölder function of $\mathbb{R}^r$
\end{lemma}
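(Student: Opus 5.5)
Since $f(x)=g(Wx)$ with $W$ a coordinate-selection matrix satisfying $WW^\top=I_r$, the function $g$ can be written explicitly as $g(z)=f(W^\top z)$ for all $z\in\mathbb{R}^r$. Indeed, $W(W^\top z)=z$, so $f(W^\top z)=g(WW^\top z)=g(z)$. This representation drives the whole proof: we transfer smoothness and the Hölder inequality \eqref{eq: holder norme 2} from $f$ to $g$ through the linear map $z\mapsto W^\top z$, which embeds $\mathbb{R}^r$ isometrically into $\mathbb{R}^d$ by placing the coordinates of $z$ at the indices $j_1,\dots,j_r$ and zeros elsewhere.

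\textbf{Step 1: differentiability and derivatives of $g$.} Since $g=f\circ W^\top$ with $W^\top$ linear, $g$ is $\ell=\lfloor\beta\rfloor$ times differentiable. For a multi-index $\boldsymbol{k}\in\mathbb{N}^r$, let $\m\in\mathbb{N}^d$ be the multi-index with $m_{j_i}=k_i$ and $m_j=0$ for $j\notin J$. Then
\[
D^{\boldsymbol{k}}g(z)=D^{\m}f(W^\top z),\qquad \m!=\boldsymbol{k}!,\qquad (W^\top u)^{\m}=u^{\boldsymbol{k}} .
\]
The first identity follows by iterating the chain rule, because $\partial_{z_i}$ corresponds to $\partial_{x_{j_i}}$ along the embedding.

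\textbf{Step 2: reducing the Taylor polynomial.} Fix $z,z'\in\mathbb{R}^r$ and set $x=W^\top z$, $x'=W^\top z'$. We apply \eqref{eq: holder norme 2} to $f$ at these points. The vector $x-x'=W^\top(z-z')$ vanishes outside $J$, so every monomial $(x-x')^{\m}$ with some $m_j>0$, $j\notin J$, is zero. Hence the Taylor sum over $\{\m\in\mathbb{N}^d,|\m|\le\ell\}$ reduces exactly to the sum over multi-indices supported in $J$. By Step 1, this reduced sum equals $\sum_{|\boldsymbol{k}|\le\ell}\frac{1}{\boldsymbol{k}!}D^{\boldsymbol{k}}g(z')(z-z')^{\boldsymbol{k}}$.

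\textbf{Step 3: the norm.} Since $\|W^\top(z-z')\|=\|z-z'\|$ (because $WW^\top=I_r$), the right-hand side of \eqref{eq: holder norme 2} becomes $L\|z-z'\|^\beta$. Combining with Step 2 gives the $\beta$-Hölder inequality for $g$ on $\mathbb{R}^r$ with the same constant $L$, so $g\in\mathcal{F}_\beta(L)$ on $\mathbb{R}^r$.

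The only step needing care is the bookkeeping in Step 2. Once Step 1 is established, the argument is direct, so there is no genuine obstacle.
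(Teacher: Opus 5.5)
Your proof is correct and follows the paper's argument: write $g(z)=f(W^\top z)$, apply \eqref{eq: holder norme 2} at $W^\top z$ and $W^\top z'$, bound $\|W^\top(z-z')\|$ by $\|z-z'\|$, and reduce the Taylor sum to multi-indices supported on $J$. The one difference is cosmetic, and both justifications are valid: you discard the off-$J$ terms because the monomials $(W^\top(z-z'))^{\m}$ vanish, whereas the paper discards them because $D^{\m}f$ is identically zero for such $\m$.
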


\begin{proof}
Let $z$ and $z'$ in $\mathbb{R}^{\color{black}r}$. {\color{black} We want to prove  that,}
$$ \left| 
g(z) - \sum_{\substack{{\color{black}\m\in\mathbb N^r}\\ |\m| \le \ell}} \frac{1}{\m!} D^{\m} g(z')\,{\color{black}(z - z')}^{\m} 
\right|
\le L \|z - z'\|^\beta,$$
where $L$ is the constant in \eqref{ass: holder} of \eqref{ass: regularity f}. {\color{black} Since, $g(z) = f(W^\top z)$, for any $z\in \mathbb R^r$, we have, by definition of $W$, that
\[
g(z) - \sum_{\substack{\m\in\mathbb N^r\\ |\m| \le \ell}} \frac{1}{\m!} D^{\m} g(z')\,{\color{black}(z - z')}^{\m} 
 = f(W^\top z) - \sum_{\substack{{\m\in\mathbb N^r}\\ |\m| \le \ell}} \frac{1}{\m!} D^{\m} f(W^\top z')\,(W^\top {\color{black}z} - W^\top {\color{black}z'})^{\m} .
\]
{\color{black} Since $f$ is $\beta$-Hölder, by (\ref{eq: holder norme 2})}, 
\[
 \left|  f(W^\top z) - \sum_{\substack{\m\in\mathbb N^d\\ |\m| \le \ell}} \frac{1}{m!} D^m f(W^\top z')\,(W^\top {\color{black}z} - W^\top {\color{black}z'})^m  \right|
{\leq L \|W^\top(z-z')\|^\beta }
\le L \|z - z'\|^\beta,
\]
where the last inequality holds since $\Vert W \Vert_{\text{op}} \leq 1$.
}
To conclude the proof, it is enough to show that 
\begin{equation}\label{to show 5.2}
 \sum_{\substack{{\color{black}\m\in\mathbb N^d}\\ |\m| \le \ell}} \frac{1}{\m!} D^{\m} f(W^\top z')\,(W^\top {\color{black}z} - W^\top {\color{black}z'})^{\m} =  \sum_{\substack{{\color{black}\m\in\mathbb N^r}\\ |\m| \le \ell}} \frac{1}{\m!} D^{\m} f(W^\top z')\,(W^\top {\color{black}z} - W^\top {\color{black}z'})^{\m}.
\end{equation}
\textcolor{black}{Since, for $\m=(m_1,\dots,m_d)$ such that $m_j \neq 0$ for $j \notin J$, $D^{m} f(W^\top z')$ is the null function, the sum in the l.h.s of (\ref{to show 5.2}) can be limited to {\color{black}the} $\m$ such that $m_j=0$ for $j\notin J$. Then, for those $\m$ of $\mathbb{N}^d$, {\color{black} we have that} $D^m f(W^\top z') = D^{m_{\mid J}}f(W^\top z')$, $ {\color{black}\left( m_{\mid J}\right)! }= m!$ using the convention $0^0=1$ and $(W^\top {\color{black}z} - W^\top {\color{black}z'})^m=(W^\top {\color{black}z} - W^\top {\color{black}z'})^{m_{\mid J}}$, where $m_{{\mid J}}=(m_j)_{j\in J}$, which concludes the proof.}

\end{proof}

\begin{lemma}\label{lemma: formule densité}
If {\color{black} $x_1,\dots, x_n$} satisfy \eqref{ass: density}, then $z_i=Wx_i $ for $i \in \{1,...,n\}$ are {\color{black} i.i.d.} with a density $\tilde{p}$ such that there \textcolor{black}{exists} {\color{black}a positive constant} {\color{black} $\tilde{p}_{m}$} satisfying 
\begin{equation*}
\forall z \in W\Theta', \quad 0<\tilde{p}_{m}\leq \tilde{p}(z) \leq 1,
\end{equation*}
where $W\Theta' = \{Wx; x \in \Theta' \}.$ %\textcolor{black}{je ne comprends pas pourquoi on parle de $\Theta$' on avait dit qu'on ne le mettait plus : A REPRENDRE.}

%\textcolor{black}{remplacer $\tilde{p}_{M}$ par 1 et vérifier que ça ne change rien ailleurs.}

\end{lemma}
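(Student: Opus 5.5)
The plan is to write the density of $z_i=Wx_i$ explicitly as a marginal of $p$, then bound it from below and from above on $W\Theta'$ using \eqref{ass: density}.

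\textbf{Explicit formula.} Since $W$ picks the coordinates indexed by $J$, I would complete $W$ by the $(d-r)\times d$ selection matrix $W_\perp$ onto the indices of $\{1,\dots,d\}\setminus J$. The map $x\mapsto (Wx,W_\perp x)$ is a permutation of coordinates, so it has Jacobian one. Fubini's theorem then shows that $z_i$ has the density
\[
\tilde p(z)=\int_{\mathbb{R}^{d-r}} p\big(W^\top z+W_\perp^\top v\big)\,\mathrm{d}v .
\]
The $z_i$ are i.i.d. because the $x_i$ are i.i.d. and $W$ is deterministic.

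\textbf{Lower bound.} Fix $z\in W\Theta'$ and write $z=Wx$ with $x\in\Theta'$. By \eqref{ass: density}, $B_\infty(x,\eta)\subset \operatorname{supp}(p)$ and $p\ge p_m$ on this box. For every $v$ with $\|v-W_\perp x\|_\infty\le\eta$, the point $W^\top z+W_\perp^\top v$ lies in $B_\infty(x,\eta)$, because its $J$-coordinates coincide with those of $x$. Integrating only over this cube of side $2\eta$ gives
\[
\tilde p(z)\ge p_m(2\eta)^{d-r}=:\tilde p_m>0,
\]
uniformly in $z\in W\Theta'$. This step is routine.

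\textbf{Upper bound, the main obstacle.} The natural argument bounds the integrand by $p\le p_M$. However, this only gives $\tilde p(z)\le p_M$ times the Lebesgue measure of the section $\{v:\ W^\top z+W_\perp^\top v\in\operatorname{supp}(p)\}$. That quantity is neither finite in general nor at most $1$, so the bound $\tilde p\le 1$ does not follow from \eqref{ass: density} by this route.

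To establish the bound exactly as stated, I would first look for a use of the normalisation $\int\tilde p=1$ combined with a regularity property of $\tilde p$ on $W\Theta'$. Concretely, I would try to show that $\tilde p$ is Lipschitz near $W\Theta'$, inheriting this from $p$, and then that a density exceeding $1$ at a point would put mass larger than $1$ on a neighbourhood. This fails unless the neighbourhood has volume at least $1$, so some further structure is needed.

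I expect this step to be where the stated inequality is actually at stake. If no such argument closes, the honest conclusion is that only boundedness $\tilde p\le\tilde p_M<\infty$ can be obtained, under a bounded-support assumption on $p$. I would then check that Lemmas \ref{lemma 11} and \ref{lemma 12} use only an upper bound of this kind, in which case the constant $1$ is immaterial for Theorems \ref{thm: tsybakov}, \ref{thm:minimizer} and \ref{thm: real minimum}.
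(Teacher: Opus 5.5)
Your computation of $\tilde p$ (via the coordinate permutation $x\mapsto(Wx,W_\perp x)$, equivalent here to the paper's splitting $\mathbb{R}^d=\ker W\oplus(\ker W)^\perp$) matches the paper's argument exactly, and so does your lower bound $\tilde p\ge p_m(2\eta)^{d-r}$ on $W\Theta'$. On the upper bound your suspicion is justified, and the fault lies with the statement, not with your proposal. The paper's proof simply asserts $\tilde p(z)\le\int_{\mathbb{R}^d}p(y)\,dy=1$. But $\tilde p(z)$ is the integral of $p$ over a $(d-r)$-dimensional affine slice, taken with respect to $(d-r)$-dimensional Lebesgue measure, and this is not dominated by the $d$-dimensional integral of $p$; the only valid normalisation is $\int_{\mathbb{R}^r}\tilde p=1$.

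The bound $\tilde p\le 1$ is in fact false under \eqref{ass: density}. Take $d=2$, $r=1$, $J=\{1\}$ and $p(x)=q(x^{(1)})\,s(x^{(2)})$. Here $s$ is a Lipschitz density bounded below on a large interval, and $q=\tfrac12 u+\tfrac12\tau_\delta$, where $u$ is of the same kind and $\tau_\delta(t)=\delta^{-1}(1-\lvert t-z_0\rvert/\delta)_+$ is a tent centred at some $z_0\in W\Theta'$. Then \eqref{ass: density} holds with $p_M=\max(1,\sup q\,\sup s)$, which is allowed. Yet $\tilde p=q$ and $\tilde p(z_0)\ge 1/(2\delta)>1$ for $\delta<1/2$. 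Without a bounded support, $\tilde p$ need not even be finite on $W\Theta'$. Let $T(x)=\bigl(1-(1+(x^{(2)})^2)\lvert x^{(1)}-z_0\rvert\bigr)_+$, which is bounded by $1$ and has $\int T=\pi$. The density $p=\tfrac{1}{2\pi}T+\tfrac12\varphi$, with $\varphi$ a nice density securing the lower bounds, satisfies \eqref{ass: density}, yet $\tilde p(z_0)=+\infty$.

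So the rescue you sketch via Lipschitz continuity plus normalisation cannot succeed, since no argument proves a false inequality. Your proposal should end with such a counterexample rather than leave the question open. Your fallback is the right repair. Add a hypothesis such as $\operatorname{supp}(p)\subset B_\infty(0,R)$, which gives $\tilde p\le p_M(2R)^{d-r}=:\tilde p_M$ on all of $\mathbb{R}^r$, and state the lemma as $\tilde p_m\le\tilde p\le\tilde p_M$. Downstream, the value $1$ is only used as some finite bound. One use is the moment estimates in the proof of Lemma~\ref{lemma 27}, where $\tilde p$ is evaluated at $z+h_nu\in W\Theta'$ for $z\in W\Theta$, $\lVert u\rVert\le 1$ and $h_n\le 1$. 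Another is the transfer of the lemmas of \cite{tsybakov} to $g$. The role of this lemma in the rest of the paper is therefore preserved under that extra assumption. The lemma as stated, however, does not follow from \eqref{ass: density} alone, contrary to what the paper claims.
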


\begin{proof}
{\color{black} Note that} $\mathbb{R}^d = \ker(W) \oplus \ker(W)^\perp$, where $\oplus$ {\color{black} denotes the direct sum, \textit{i.e.}} for all $x \in \mathbb{R}^d$ there exists a unique decomposition $x = x_1+x_2$ such that $x_1 \in \ker(W), x_2 \in \ker(W)^\perp$.
Hence, {\color{black} for $X$ a random vector having a probability density function $p$ and {\color{black} for} any bounded measurable function $\varphi : \mathbb{R}^r \to \mathbb{R}$, we have}
\begin{align*}
\mathbb{E}(\varphi(WX))
&=
\int_{\mathbb{R}^d} \varphi(Wx)\, p(x)\, dx \\
&=
\int_{\ker(W)^\perp}\int_{\ker(W)}
\varphi\bigl(W(x_1+x_2)\bigr)\,
p(x_1+x_2)\, dx_1\, dx_2 \\
&=
\int_{\ker(W)^\perp}
\varphi(Wx_2)
\left(
\int_{\ker(W)}
p(x_1+x_2)\, dx_1
\right)\, dx_2 .
\end{align*}
Using the change of variables $x_2 = W^\top z$ we get
\begin{align*}
\mathbb{E}(\varphi(WX))
&=
\int_{\mathbb{R}^r}
\varphi(z)
\left(
\int_{\ker(W)}
p(x_1 + W^\top z)\, dx_1
\right)\, dz .
\end{align*}
\textcolor{black}{Thus, the probability density function of \(Z=WX\) is}
\begin{equation}\label{eq:density z}
\tilde p(z)
=
\int_{\ker(W)}
p(x_1 + W^\top z)\, dx_1,
\qquad
\forall z \in \mathbb{R}^r .
\end{equation}
\textcolor{black}{Since $p$ is a probability density function
\begin{align*}
\tilde p(z)\le
  \int_{\mathbb{R}^d} p(y)\, dy =1.
\end{align*}}
{\color{black} Let $z\in W\Theta'$. Thus, there exists $x\in \Theta'$ such that $z = Wx$, hence \eqref{eq:density z} becomes}
\begin{align*}
\tilde p(z) =
\int_{\ker(W)}
p(x_1 + W^\top W x)\, dx_1 .
\end{align*}
\textcolor{black}{Since $WW^\top  = I_r$, $ x - W^\top W x \in \ker(W)$ and thus using the change of variables $u=x_1-(x- W^\top W x)$ gives}
% {\color{magenta}Since $WW^\top  = I_r$, we have that} $ x - W^\top W x \in \ker(W).$ {\color{magenta}Hence,}
% we perform the change of variable $ x_1 = u + x - W^\top W x$ {\color{magenta} to get}
%Thus,
\begin{align*}
\tilde p(z)
=
\int_{\ker(W)} p(u+x)\, du.
\end{align*}
\textcolor{black}{Using (\ref{ass: density}) and the fact that $\textrm{dim(ker($W$))}=d-r$, we get
\begin{align*}
\tilde p(z)
\ge
\int_{\ker(W)\cap B_\infty(0,\eta)}
p(u+x)\, du
\ge
p_{m}\int_{\ker(W)\cap B_\infty(0,\eta)} \, du\,
,
\end{align*}
\textcolor{black}{where 
%$\ker(W)\cap B_\infty(0,\eta)=\{(u_{j_k})_{1\leq k\leq d}, u_{j_k}=0,\forall k\in\{1,\dots,r\}\textrm{ and }|u_{j_\ell}|\leq\eta,  \forall \ell\in \{r+1,\dots,d\}\}$
\[
\ker(W)\cap B_\infty(0,\eta)=\big\{(u_{j_k})_{1\leq k\leq d}, \, u_{j_k}=0,\forall k\in\{1,\dots,r\}\textrm{ and }|u_{j_\ell}|\leq\eta,  \forall \ell\in \{r+1,\dots,d\}\big\}
\]
}}
Hence, we get 
\begin{equation*}
\tilde p(z) \ge
p_{m}(2{\color{black}\eta})^{d-r} >0,
\end{equation*}
\textcolor{black}{which concludes the proof if we take $\tilde{p}_ m = p_{m}(2\eta)^{d-r}$.}

% \textcolor{black}{REPRENDRE LA FIN}

% \sout{Let \(z = Wx \in W\Theta'\) (where \(x \in \Theta'\)). }
% {\color{magenta}Let $z\in W\Theta'$. There exists $x\in \Theta'$ such that $z = Wx$.} Then, {\color{magenta} for such a $z$ (?)}
% \begin{align*}
% \tilde p(z) =
% \int_{\ker(W)}
% p(x_1 + W^\top z)\, dx_1 =
% \int_{\ker(W)}
% p(x_1 + W^\top W x)\, dx_1 .
% \end{align*}
% {\color{magenta}Since $WW^T = I_r$, we have that} $ x - W^\top W x \in \ker(W).$ {\color{magenta}Hence,}
% we perform the change of variable $ x_1 = u + x - W^\top W x$ {\color{magenta} to get}
% %Thus,
% \begin{align*}
% \tilde p(z)
% =
% \int_{\ker(W)} p(u+x)\, du
% \ge
% \int_{\ker(W)\cap B_\infty(x,{\color{magenta}\delta})}
% p(u+x)\, du
% \ge
% p_{m}\,
% \lambda\!\left(\ker(W)\cap B_\infty(x,{\color{magenta}\delta})\right),
% \end{align*}
% {\color{magenta}[$\delta$ ou $\eta$? ce n'est pas le $\eta$ de (A4)?]} where $p_m$ {\color{magenta} $\eta$ are the constants} \sout{is a constant} defined in \eqref{ass: density} and $\lambda(\cdot)$ is the Lebesgue measure in $\mathbb{R}^d$.
% Hence, {\color{magenta} since $\mathrm{dim}(W)=r,$}
% \begin{equation*}
% \tilde p(z) \ge
% p_{m}(2{\color{magenta}\delta})^{d-r} >0
% \end{equation*}
% %Defining $\tilde{p}_ m = p_{m}(2{\color{magenta}\delta})^{d-r}$ and $\tilde p_M = 1$ concludes the proof.
\end{proof}

%\section{Appendix}

%\subsection{\color{magenta}Properties of the Lambert functions}~

{\color{black}In the following we prove some useful results on the properties of the Lambert functions $\mathcal W$ and $\mathcal W_{-1}$, defined in  \eqref{def: lambert functions}.}

\begin{lemma}\label{lemma: proporties lambert function}
{\color{black}Let $\mathcal W$ be the Lambert function defined in \eqref{def: lambert functions}.}
 The following inequalities hold
%For $\mathcal{W}$ the Lambert function introduced in \eqref{def: lambert functions}, we have
\begin{equation}\label{eq:croissance W}
\forall x \geq 0, \hspace{1.5ex}
x \geq \mathcal{W}(x),
\end{equation}
\begin{equation}\label{eq:minoration_W}
\forall x\geq e, \hspace{1.5ex} \mathcal{W}(x)\geq 1,
\end{equation}
\begin{equation}\label{eq:majoration_W_log}
\forall x\geq e, \hspace{1.5ex} \mathcal{W}(x)\leq\log(x),
\end{equation}
\begin{equation}\label{prop log}
\forall \kappa \geq 1, \hspace{1.5ex} \forall x \geq 0, \hspace{1.5ex} \mathcal{W}(\kappa x) \leq \kappa \mathcal{W}(x),
\end{equation}
\begin{equation}\label{eq:on W}
\textcolor{black}{\forall q \in \mathbb{N}^* \textrm{ and } n\geq 3,} \hspace{1.5ex} 5 \mathcal{W}(n) \geq \mathcal{W}\left(n^{2+\frac{2}{q}}\right).
\end{equation}
\end{lemma}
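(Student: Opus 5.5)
All five inequalities follow from two basic facts about $\mathcal W$: the defining identity $\mathcal W(x)e^{\mathcal W(x)}=x$, and the fact that $\mathcal W$ is increasing on $[-e^{-1},+\infty[$, being the inverse of the increasing map $t\mapsto te^t$ on $[-1,+\infty[$. The values $\mathcal W(0)=0$ and $\mathcal W(e)=1$ will also be used. The strategy for each inequality $\mathcal W(a)\le b$ is the same: by monotonicity it is equivalent to $a\le be^{b}$, which is an elementary inequality to check.

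For \eqref{eq:croissance W}, let $x\ge 0$. Then $\mathcal W(x)\ge 0$, so $e^{\mathcal W(x)}\ge 1$, and hence $x=\mathcal W(x)e^{\mathcal W(x)}\ge \mathcal W(x)$.

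For \eqref{eq:minoration_W}, monotonicity gives $\mathcal W(x)\ge \mathcal W(e)=1$ for $x\ge e$.

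For \eqref{eq:majoration_W_log}, take logarithms in the defining identity to get $\log x=\mathcal W(x)+\log\mathcal W(x)$. For $x\ge e$ we have $\mathcal W(x)\ge1$ by \eqref{eq:minoration_W}, so $\log\mathcal W(x)\ge 0$ and therefore $\log x\ge\mathcal W(x)$.

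For \eqref{prop log}, the case $x=0$ is trivial. For $x>0$, set $w=\mathcal W(x)>0$. Since both $\kappa x$ and $\kappa w$ lie in the domain where $\mathcal W$ is increasing, it suffices to show $\kappa x\le \kappa w e^{\kappa w}$, that is $x\le we^{\kappa w}$. This holds because $x=we^{w}$ and $e^{w}\le e^{\kappa w}$ when $\kappa\ge1$ and $w\ge 0$.

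The last inequality \eqref{eq:on W} is the only one requiring a small computation. First, since $q\ge1$ and $n\ge3$, we have $n^{2+2/q}\le n^4$, so by monotonicity it suffices to prove $\mathcal W(n^4)\le 5\mathcal W(n)$. Set $w=\mathcal W(n)$, which is positive because $n\ge 3>0$. By monotonicity, the claim is equivalent to $n^4\le 5we^{5w}$. Using $e^{w}=n/w$, we get
\[
5we^{5w}=5w\Big(\frac{n}{w}\Big)^5=\frac{5n^5}{w^4},
\]
so we need $w^4\le 5n$. Substituting $n=we^{w}$, this becomes $w^3\le 5e^{w}$. Now $t\mapsto t^3e^{-t}$ attains its maximum over $t>0$ at $t=3$, where it equals $27e^{-3}\approx1.34<5$, which concludes the argument.

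I expect this final step to be the only point needing care. A cruder route, such as bounding $w\le\log n$ and comparing $(\log n)^4$ with $5n$, is numerically tight for moderate $n$. Eliminating $n$ through the identity $n=we^{w}$ avoids that difficulty and gives a clean uniform bound.
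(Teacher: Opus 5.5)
Your proof is correct. For the first four inequalities it is the paper's argument: each claim is reduced, through the monotonicity of $\mathcal W$ and the identity $\mathcal W(x)e^{\mathcal W(x)}=x$, to an elementary inequality. Taking logarithms of the identity for \eqref{eq:majoration_W_log} is just a rephrasing of the paper's equivalence $\mathcal W(x)\le\log x\iff x\le x\log x$.

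For \eqref{eq:on W} you reach the same reduced inequality $\mathcal W(n)^4\le 5n$ as the paper. The paper keeps $q$ general, obtains $\mathcal W(n)^4\le 5n^{3-2/q}$ and then uses $3-2/q\ge 1$, while you first replace $n^{2+2/q}$ by $n^4$. The only genuine difference is the final verification. The paper bounds $\mathcal W(n)\le\log n$ and simply asserts that $(\log n)^4\le 5n$ for $n\ge 3$. This is true, but only because $\max_{t>0}t^4e^{-t}=256e^{-4}\approx 4.69<5$. It is therefore tight around $n=e^4$, and the paper leaves it unjustified. Your substitution $n=we^{w}$ instead yields $w^3\le 5e^{w}$, which holds with a wide margin since $\max_{t>0}t^3e^{-t}=27e^{-3}\approx 1.34$, and it comes with a one-line proof.

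So your version costs one extra substitution but is fully self-contained. The paper's version is shorter but rests on a numerically delicate claim it does not check.
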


\begin{proof}
{\color{black}Since the function} $x \mapsto xe^x$ {\color{black} is increasing} on $\mathbb{R}_+$, \textcolor{black}{it is also the case of its inverse $\mathcal{W}$ hence} we have, for $x \geq 0$, 
$$ x \geq \mathcal{W}(x) \Longleftrightarrow x e^x \geq x,$$ which, using {\color{black} the fact that} $e^x \geq {\color{black}1}$ for $x\geq0$ proves \eqref{eq:croissance W}.
\textcolor{black}{In the same way, to prove \eqref{eq:minoration_W}, we use that
  $$ \mathcal{W}(x) \geq 1 \Longleftrightarrow x \geq e.$$}
\textcolor{black}{To prove \eqref{eq:majoration_W_log}, we use that
  $$\mathcal{W}(x)\leq\log(x) \Longleftrightarrow x\leq x\log(x),$$
which is valid for all $x\geq e$.}
{\color{black}To prove \eqref{prop log} we use} the same properties {\color{black} and} have {\color{black} that,} for $x \geq 0$ and $\kappa \geq 1$,
\begin{align*}
\mathcal{W}(\kappa x) \leq \kappa \mathcal{W}(x) & \iff \kappa x \leq \kappa \mathcal{W}(x)e^{\kappa \mathcal{W}(x)} \\
& \iff x \leq \mathcal{W}(x)e^{\kappa \mathcal{W}(x)},
\end{align*}
which is always satisfied for $\kappa \geq 1$, \textcolor{black}{since $\mathcal{W}$ is non negative on $\mathbb{R}_+$ and, by definition of $\mathcal{W}$,}
$x=\mathcal W(x) e^{\mathcal W(x)}$.

% Moreover, for $x\geq 0, \gamma > 0$ and $a \geq 0$:
% \begin{align*}
% \gamma \mathcal{W}(x) \geq \mathcal W(x^a) &\iff
% \gamma \mathcal{W}(x)e^{\gamma \mathcal{W}(x)} \geq x^a\\
% &\iff \gamma \mathcal{W}(x)^{1-\gamma}\left(\mathcal{W}(x)e^{\mathcal{W}(x)}\right)^{\gamma} \geq x^a\\
% &\iff \gamma \mathcal{W}(x)^{1-\gamma}x^\gamma \geq x^a\\
% &\iff \gamma \geq \left(\frac{\mathcal{W}(x)}{x}\right)^{\gamma-1}x^{a-1}.
% \end{align*}
% Hence, we have
% \begin{align*}
% \gamma \mathcal{W}(n) \geq \mathcal{W}\left(n^{2+\frac{2}{q}}\right) 
% &\iff \gamma \geq \left(\frac{\mathcal{W}(n)}{n}\right)^{\gamma-1}n^{2+\frac{2}{q}-1}.
% \end{align*}
% {\color{magenta}[ok jusqu'ici, mais je ne comprends pas la fin - voir commentaire plus bas]}
% The worst case being for $q=1$, $\gamma \geq \mathcal{W}(n)^{\gamma-1}n^{4-\gamma}$ is a sufficient condition for the wanted inequality to hold for all $q\in \mathbb{N}^*$. We note that with $\gamma = 5$ this condition is satisfied for all $n \in \mathbb{N}^*$, which proves \eqref{eq:on W}.

{\color{black} Let us now prove (\ref{eq:on W}). Since $\mathcal{W}$ is the inverse of $x\mapsto {\color{black}xe^x}$, applying this function to (\ref{eq:on W}) leads to
\[n^{2+\frac{2}{q}} \leq 5 \mathcal W(n) e^{5\mathcal W(n)}.
\]
Observing that the r.h.s is equal to $5(\mathcal{W}(n)e^{\mathcal{W}(n)})^5\mathcal{W}(n)^{-4}=5n^5\mathcal{W}(n)^{-4}$, proving (\ref{eq:on W}) is equivalent to proving that $\mathcal{W}(n)^4\leq 5n^{3-\frac{2}{q}}.$ Hence, {\color{black} since $1\leq 3-2/q$,} it is enough to prove that $\mathcal{W}(n)^4\leq 5n$.
  % [\`a mon avis ici il faut expliquer pourquoi $q$ disparait]}
Since for all $x\geq e$, $\mathcal{W}(x)\leq \log(x)$, it is thus enough to prove that, {\color{black}for $n\geq 3$,} $\log(n)^4\leq 5n$  which is true.
% \begin{align*}
% 5\mathcal W(n) \geq \mathcal W(n^{2+\frac{2}{q}}) & \iff n^{2+\frac{2}{q}} \leq 5 \mathcal W(n) e^{5\mathcal W(n)}\\
% & \iff n^{2+\frac{2}{q}} \leq 5 n^5 \mathcal W(n)^{-4}\\
% & \iff \mathcal W(n)^4 \leq 5 n^{3-\frac{2}{q}}
% \end{align*}
% ensuite il faut utiliser  \eqref{eq:croissance W} : $\mathcal W(n) \leq n$ ?
}
\end{proof}

\begin{lemma}\label{lemmaW} {\color{black} Let $n\geq3$, \textcolor{black}{$\gamma \in ]0,1]$} and let $\mathcal W $ and $\mathcal{W}_{-1}$ be the Lambert functions defined in \eqref{def: lambert functions}.} {\color{black} Using the notations of Lemma \ref{lemma : J}, let} $c= c_1 h^{d+2}$. \textcolor{black}{Then, the {\color{black}following} inequality}
\[
\left(\frac{\mathcal{W}(n)}{n}\right)^{\gamma}
\geq
\exp\left(-c n\right)
\]
\textcolor{black}{is always satisfied} if $c \geq 1/e$, {\color{black} whereas, if $0<c <1/e$, it only holds for $n\geq -\frac{\mathcal{W}_{-1}(-c)}{c}$.}
\end{lemma}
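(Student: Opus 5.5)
The first step is to reduce to the case $\gamma=1$. For $n\ge 3$ we have $n\ge e$, so \eqref{eq:minoration_W} and \eqref{eq:croissance W} of Lemma \ref{lemma: proporties lambert function} give $1\le\mathcal W(n)\le n$, i.e. $0<\mathcal W(n)/n\le 1$. Since $\gamma\in]0,1]$, this yields $(\mathcal W(n)/n)^\gamma\ge \mathcal W(n)/n$. It is therefore enough to prove $\mathcal W(n)/n\ge e^{-cn}$, which is equivalent to $\mathcal W(n)\ge n e^{-cn}$.

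Next I would simplify $\mathcal W(n)$ itself. By the definition of $\mathcal W$, $n=\mathcal W(n)e^{\mathcal W(n)}$, so $\mathcal W(n)/n=e^{-\mathcal W(n)}$. The target inequality then becomes $e^{-\mathcal W(n)}\ge e^{-cn}$, that is,
\[
\mathcal W(n)\le cn .
\]
Since $x\mapsto xe^x$ is increasing on $\mathbb R_+$, applying it to both sides shows this is equivalent to $n\le cn\,e^{cn}$, i.e. $e^{cn}\ge 1/c$, i.e. $n\ge -\log(c)/c$. Because the reduction to $\gamma=1$ is only sufficient, this condition is sufficient for the stated inequality, which is all that is used in the proofs of Theorems \ref{thm:minimizer} and \ref{thm: real minimum}.

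The remaining task is to compare $n\ge -\log(c)/c$ with the two cases in the statement.

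\textbf{Case $c\ge 1/e$.} Here $-\log(c)/c\le e\log(1/c)\le e$. More precisely, if $c\ge 1$ the bound is $\le 0$, and if $1/e\le c<1$ then $\log(1/c)\le 1$ and $1/c\le e$. Hence $n\ge 3>e$ always suffices.

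\textbf{Case $0<c<1/e$.} I will show $-\mathcal W_{-1}(-c)/c\ge -\log(c)/c$, i.e. $-\mathcal W_{-1}(-c)\ge -\log c$. Set $w=\mathcal W_{-1}(-c)\le -1$. Then $we^{w}=-c$, so $-w=c\,e^{-w}$, and taking logarithms gives $\log(-w)=\log c-w$, i.e. $-w=-\log c+\log(-w)$. Since $-w\ge1$, we have $\log(-w)\ge 0$, hence $-w\ge -\log c$. So $n\ge -\mathcal W_{-1}(-c)/c$ implies $n\ge -\log(c)/c$, and the inequality holds. (In fact $-\mathcal W_{-1}(-c)/c$ is exactly where $\mathcal W(n)/n=e^{-cn}$ when $\mathcal W(n)$ is replaced by $\log n$; the "only" in the statement should be read as a sufficient threshold.)

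The only delicate point is keeping track of the branch $\mathcal W_{-1}$ and its sign conventions, handled by the identity $-w=-\log c+\log(-w)$ above.
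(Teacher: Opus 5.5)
Your proof is correct. It shares the paper's first step (reducing to $\gamma=1$ via $1\le\mathcal W(n)\le n$) and then takes a different route.

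The paper next uses the crude bound $\mathcal W(n)\ge 1$ to reduce to $n^{-1}\ge e^{-cn}$, rewritten as $-c\le -cn\,e^{-cn}$. It then concludes in two cases:
\begin{itemize}
\item for $c\ge 1/e$, with $xe^x\ge -1/e$;
\item for $c<1/e$, with the monotonicity of $\mathcal W_{-1}$, on the branch $n>1/c$.
\end{itemize}
So the threshold $-\mathcal W_{-1}(-c)/c$ comes straight out of the reduced inequality.

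You instead use the exact identity $\mathcal W(n)/n=e^{-\mathcal W(n)}$, which solves the $\gamma=1$ inequality exactly as $n\ge -\log(c)/c$. You then need an extra comparison, $-\mathcal W_{-1}(-c)\ge -\log c$, and your identity $-w=-\log c+\log(-w)$ handles it cleanly. For $c\ge 1/e$, your bound $-\log(c)/c\le e<3$ replaces the paper's use of $\min xe^x=-1/e$.

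Your route yields more. It gives an elementary sufficient threshold that does not involve $\mathcal W_{-1}$. Since $w<-1$ makes the comparison strict, it also shows the stated threshold is not sharp, which justifies reading ``only holds'' as a sufficient condition. The paper's proof also establishes only sufficiency. The paper's route is shorter and lands exactly on the threshold used in the hypotheses of Theorems \ref{thm:minimizer} and \ref{thm: real minimum}.

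One small correction to your parenthetical remark. The value $-\mathcal W_{-1}(-c)/c$ solves $1/n=e^{-cn}$, which is what you get by replacing $\mathcal W(n)$ by its lower bound $1$ (the paper's reduction). It is not what you get by replacing $\mathcal W(n)$ by $\log n$. This does not affect your argument.
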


\begin{proof}
  {\color{black} Since $\gamma\in ]0,1]$, \textcolor{black}{using (\ref{eq:croissance W}) {\color{black} of Lemma \ref{lemma: proporties lambert function}}, it is enough to prove that $$
      \frac{\mathcal{W}(n)}{n}\geq
\exp\left(-c n\right).$$} Moreover, since by (\ref{eq:minoration_W}), $\mathcal{W}(n) \geq1$, for all $n\geq 3,$
it is enough to show that 
\[
n^{-1}
\geq
\exp\left(-c n\right)
\]
or equivalently, \textcolor{black}{since $c$ is positive,} 
\begin{equation}\label{eq_cn}
-c \leq-cn\exp\left(-c n\right).
\end{equation}
\textcolor{black}{Using that, for all $x\in\mathbb{R}$, $x\exp(x)\geq -1/e$, we get that, if $c\geq 1/e$, \eqref{eq_cn} is always satisfied.} On the other hand, if $0<c< 1/e$, for $n > 1/c$, \eqref{eq_cn} is equivalent to 
\[
\mathcal{W}_{-1}(-c) \geq -cn,
\]
where we used that the Lambert function $\mathcal{W}_{-1}$ is non-increasing, which concludes the proof since
  $n\geq -\mathcal{W}_{-1}(-c)/c$. Note that
this assumption can be done since $ -\mathcal{W}_{-1}(-c)/c > 1/c$ thus implying that $n>1/c$, which concludes the proof. 
}
\end{proof}

\newpage

\section{\textcolor{black}{Appendix}}

{\color{black} In this section we present some results inspired from \cite{tsybakov}.}
%{\color{magenta} The first remark is that all the results in \cite{tsybakov} hold for the function $f$ defined on $\mathbb R^d$. According to our Lemmas \ref{lemma : g holderienne} and \ref{lemma: formule densité} these results can also be applied to the function $g$ defined in Eq. \eqref{def: g}. 
\textcolor{black}{More precisely, Lemma \ref{lemma 11}  and Lemma \ref{lemma 12} correspond to Lemma 11 and Lemma 12 of \cite{tsybakov} but without choosing any specific value for the {\color{black} kernel-}bandwidth $h_n$ and the regularization parameter $\lambda_n$ used in the local polynomial approach introduced in \eqref{def: ghat}. {\color{black}Another} difference is that in Lemma \ref{lemma 12} the logarithmic factor has been replaced by the Lambert function $\mathcal W$ defined in \eqref{def: lambert functions}.
Moreover, while Lemma \ref{lemma 11}  and Lemma \ref{lemma 12} hold for the gradient estimator $\widehat{\nabla g}$ of $\nabla g$, 
Lemma \ref{lemma 11bis} and Lemma \ref{lemma 12bis} hold directly for the estimator $\hat g$ of $g$. 
}

Note that in the following, the constant $A$, {\color{black} $A_1$ and $A_2$ are} not necessarily the same from one line to the other.
%{\color{magenta}
%Recall that $\Theta \subseteq \mathbb R^d$ is a compact set and $\Theta' = \{x+y; x\in \Theta, \Vert y\Vert\leq1\}.$
%Let $W$ be the $r\times d$ matrix defined in Eq. \eqref{def: g} and recall that $z_i=W^Tx_i$, with $x_i$ defined in \eqref{eq: regression model} {\color{orange} and satisfying \eqref{ass: density} (?)}.
%}
%
For {\color{black}the sake of simplicity, in all this section we denote}
\begin{equation}\label{def: real theta_r}
 \Theta_r = W\Theta %\quad \text{and} \quad \Theta_r'=W\Theta',}%\hspace{1.5ex}\text{and}\hspace{1.5ex} {\color{salmon} \Theta_r'' = \Theta_r'+ { B(0,1)} \ \text{[\`a enlever]}}
\end{equation}
%{\color{black}[$ \Theta_r'$ n'est plus utilis\'e] }
where %{\color{salmon}\sout{$\Theta'$ and}}
$W$ {\color{black} is defined after (\ref{def: g}).} %{\color{salmon}\sout{, respectively.}}
% %{\color{orange}[notation $ B(0,1)$ pas introduite]}
% %and, {\color{magenta} given a set $\Omega$, we denote $\vert \Omega \vert = \sup_{u,v \in \Omega} \lVert u-v \rVert.$ }

\subsection{Lemmas for \textcolor{black}{the} gradient estimation}\label{lemmas:gradient_estimation}
\begin{lemma}\label{lemma 11} 

\textbf{Bound \textcolor{black}{for} the gradient estimator \textit{bias}}

{\color{black} Let $\mathcal W$ be the Lambert function defined in \eqref{def: lambert functions}.}
{\color{black} Under \eqref{ass: assumption K} -- \eqref{ass: density}}, the estimator {\color{black}  $\widehat{\nabla g}$ } of the gradient {\color{black} of $g$} defined in \eqref{def: ghat} for some kernel-bandwidth $h_n$ and regularization {\color{black} parameter} $\lambda_n$ such that $nh_n^r \geq \max \left(\mathcal{W}(n),\lambda_n^{-2}\mathcal{W}(n)\right)$ satisfies
\begin{equation*}
\forall z \in \Theta_r,\quad \left\Vert \mathbb{E}\left(\widehat{\nabla g}(z)\right) - \nabla g(z) \right \Vert \leq A_1\left(h_n^{\beta-1}+h_n^{-1}\lambda_n+h_n^{-1-\frac{r}{2}}n^{-\frac{1}{2}}\right),
\end{equation*}
where $A_1$ is a positive constant {\color{black}and $\Theta_r$ is defined in \eqref{def: real theta_r}.}
\end{lemma}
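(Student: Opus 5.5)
The plan is to reduce the bias to a deterministic statement about the regularized local polynomial system, following Lemma 11 of \cite{tsybakov}. By Lemma \ref{lemma : g holderienne}, $g$ is $\beta$-H\"older on $\mathbb{R}^r$. By Lemma \ref{lemma: formule densité}, the $z_i=Wx_i$ are i.i.d. with a density $\tilde p$ bounded below by $\tilde p_m$ on $W\Theta'$ and bounded above by $1$. So we work entirely in dimension $r$ with the model \eqref{eq: reg g}.

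Fix $z\in\Theta_r$ and write $y_i=g(z_i)+\xi_i$. Expand $g(z_i)=\theta(z)^\top U\left(\frac{z_i-z}{h_n}\right)+R_i$, where $\theta(z)$ collects $h_n^{|\m|}D^{\m}g(z)$. Since $K$ is supported in the unit ball, only $\|z_i-z\|\le h_n$ contribute, so $|R_i|\le Lh_n^\beta$. This gives
\[
B_{n,\lambda_n}(z)^{-1}D_n(z)=\theta(z)-\lambda_nB_{n,\lambda_n}(z)^{-1}\theta(z)+B_{n,\lambda_n}(z)^{-1}(\rho_n(z)+\zeta_n(z)),
\]
where $\rho_n$ is the kernel-weighted remainder term and $\zeta_n$ is the noise term. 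Applying $h_n^{-1}M$ extracts $\nabla g(z)$ from the first term. Because the $\xi_i$ are centered and independent of the design, $\mathbb{E}\bigl(B_{n,\lambda_n}^{-1}\zeta_n\bigr)=0$. Since $\lambda_n>0$, we have $\|B_{n,\lambda_n}(z)^{-1}\|_{\mathrm{op}}\le\lambda_n^{-1}$ deterministically, so every expectation is finite.

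The central step is to control the smallest eigenvalue of $B_n(z)$. Set $\bar B(z)=\mathbb{E}B_n(z)=\int U(u)U(u)^\top K(u)\tilde p(z+h_nu)\,du$. Because $z+h_nu\in W\Theta'$ for $\|u\|\le1$ and $h_n\le1$, we get $\tilde p\ge\tilde p_m$ there, hence $\lambda_{\min}(\bar B(z))\ge \tilde p_m\mu_0$, with $\mu_0>0$ the smallest eigenvalue of $\int UU^\top K$. Split on the event $E=\{\|B_n(z)-\bar B(z)\|_{\mathrm{op}}\le\tilde p_m\mu_0/2\}$.
\begin{itemize}
\item On $E$, $\|B_{n,\lambda_n}^{-1}\|_{\mathrm{op}}\le 2/(\tilde p_m\mu_0)$. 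The remainder then contributes $O(h_n^\beta)$, because $\|\rho_n\|\le Lh_n^\beta\cdot n^{-1}h_n^{-r}\sum\|U\|K$, whose expectation is $O(h_n^\beta)$. The ridge term contributes $O(\lambda_n)$, using that $\|\theta(z)\|$ is bounded via the bounded derivatives of $g$ on $\Theta'$.
\item On $E^c$, use the crude bound $\lambda_n^{-1}$ on the inverse, together with a Bernstein or matrix Bernstein inequality. Since the entries of $B_n$ are averages of terms bounded by $C h_n^{-r}$ with variance $O(h_n^{-r}/n)$, this gives $\mathbb{P}(E^c)\le c\exp(-c'nh_n^r)$.
\end{itemize}

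To obtain the $h_n^{-r/2}n^{-1/2}$ term, bound the cross term $\mathbb{E}\bigl[(B_{n,\lambda_n}^{-1}-\bar B_{\lambda_n}^{-1})\cdots\bigr]$ through the resolvent identity and $\mathbb{E}\|B_n-\bar B\|_{\mathrm{op}}\lesssim (nh_n^r)^{-1/2}$. The contribution of $E^c$ is at most $\lambda_n^{-1}\exp(-c'nh_n^r)$. The hypothesis $nh_n^r\ge\max(\mathcal W(n),\lambda_n^{-2}\mathcal W(n))$ is what makes this negligible: it gives $\lambda_n^{-1}\le (nh_n^r/\mathcal W(n))^{1/2}$, and $e^{-c'nh_n^r}$ decays fast enough. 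Adjusting constants via \eqref{prop log}, this product is $\lesssim (nh_n^r)^{-1/2}$.

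Multiplying everything by $h_n^{-1}$ yields $A_1(h_n^{\beta-1}+h_n^{-1}\lambda_n+h_n^{-1-r/2}n^{-1/2})$, uniformly in $z$ since all constants are uniform over $\Theta_r$. I expect the main obstacle to be the bad event $E^c$: making the exponential tail absorb the $\lambda_n^{-1}$ blow-up using only $nh_n^r\ge\lambda_n^{-2}\mathcal W(n)$, which may require a careful choice of the deviation level in Bernstein's inequality.
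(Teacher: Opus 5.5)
\textbf{Verdict.} Your argument is correct, but it takes a genuinely different route from the paper's.

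\textbf{Comparison with the paper.} The paper does not reprove the bound. After the same reduction to $g$ (Lemmas \ref{lemma : g holderienne} and \ref{lemma: formule densité}), it invokes Lemma 11 of \cite{tsybakov} and reads off the inequality before $h_n,\lambda_n$ are specialised. Its hypothesis $nh_n^r\ge\max(\mathcal W(n),\lambda_n^{-2}\mathcal W(n))$ is exactly the hypothesis of Lemma \ref{lemma 20}. In that route the random inverse $B_{n,\lambda_n}(z)^{-1}$ is controlled through moment bounds such as \eqref{eq:lemma20.2}, obtained with the $\varepsilon$-net and Lambert-function machinery.

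You instead fix $z$ and work on a good event $E$ where $\mu_{\min}(B_n(z))\ge\tilde p_m\mu_0/2$, obtained from entrywise Bernstein as in Lemma \ref{lemma 27} plus Weyl. Off $E$ you use only the deterministic bound $\Vert B_{n,\lambda_n}(z)^{-1}\Vert_{\mathrm{op}}\le\lambda_n^{-1}$. This is self-contained and purely pointwise. It also shows that the third term $h_n^{-1-r/2}n^{-1/2}$ only absorbs an exponentially small bad-event contribution. The condition on $\lambda_n$ enters solely through $\lambda_n^{-1}\le (nh_n^r)^{1/2}$, using $\mathcal W(n)\ge 1$. The paper's route buys brevity and reuses machinery that is needed anyway for the variance bound of Lemma \ref{lemma 12}.

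\textbf{Points to tighten.}
\begin{enumerate}
\item Your ``cross term'' paragraph is superfluous. In your decomposition $\mathbb{E}[B_{n,\lambda_n}^{-1}D_n]-\theta(z)=-\lambda_n\mathbb{E}[B_{n,\lambda_n}^{-1}]\theta(z)+\mathbb{E}[B_{n,\lambda_n}^{-1}\rho_n]$ there is no $(\mathbb{E}B_{n,\lambda_n})^{-1}$ to compare with, and the split on $E$ does all the work.
\item On $E^c$, the ridge term gives $\lambda_n\Vert\theta(z)\Vert\,\lambda_n^{-1}\mathbb{P}(E^c)$, so the factors of $\lambda_n$ cancel.
\item On $E^c$, bound the remainder term by Cauchy--Schwarz:
$\lambda_n^{-1}\mathbb{E}[\Vert\rho_n\Vert\mathds{1}_{E^c}]\le\lambda_n^{-1}Lh_n^\beta(\mathbb{E}S_n^2)^{1/2}\mathbb{P}(E^c)^{1/2}$, with $S_n=n^{-1}h_n^{-r}\sum_i\Vert U\Vert K$ and $\mathbb{E}S_n^2=O(1+(nh_n^r)^{-1})=O(1)$. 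Do not use the sup bound $\Vert\rho_n\Vert\lesssim h_n^{\beta-r}$. Its factor $h_n^{-r}$ need not be compensated, since the hypotheses only give $e^{-c'nh_n^r}\le(\mathcal W(n)/n)^{c'}$ with an unspecified Bernstein constant $c'$.
\item With item 3 in place, $\lambda_n^{-1}e^{-c'nh_n^r/2}\le\sup_{x>0}x^{1/2}e^{-c'x/2}<\infty$. So the obstacle you anticipate is benign: the deviation level is the fixed constant $\tilde p_m\mu_0/2$, and \eqref{prop log} is not needed.
\item You use $h_n\le 1$, both for $z+h_nu\in W\Theta'$ and for the boundedness of $\theta(z)$. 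It is not in the statement, but the paper relies on it implicitly too (cf.\ Lemma \ref{lemma 20}).
\end{enumerate}
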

\begin{proof}
By Lemmas \ref{lemma : g holderienne} and \ref{lemma: formule densité}, the assumptions of Lemma 11 in \cite{tsybakov}
%\cite[Lemma 11]{tsybakov} 
are satisfied for the function $g$. The inequality is obtained at the end of {\color{black}the} proof {\color{black} of Lemma 11, before replacing the values of $h_n$ and $\lambda_n$. Here we choose} $k=n$.
\end{proof}

\begin{lemma}\label{lemma 12}\textbf{Bound \textcolor{black}{for} the gradient estimator \textit{variance}}

{\color{black} Let $\mathcal W$ be the Lambert function defined in \eqref{def: lambert functions}.}
{\color{black} Under \eqref{ass: assumption K} -- \eqref{ass: density}}, consider {\color{black} the} estimator {\color{black} $\widehat{\nabla g}$ } of the gradient {\color{black} of $g$} defined in \eqref{def: ghat} for some kernel-bandwidth $h_n$ and regularisation {\color{black} parameter} $\lambda_n$ such that $nh_n^r \geq \max \left(\mathcal{W}(n),\lambda_n^{-2}\mathcal{W}(n)\right)$.
{\color{black}Then,}
\begin{equation*}
        \mathbb{E}\left[\sup_{z \in \Theta_r} \lVert  \widehat{\nabla g}(z) - \mathbb{E}[\widehat{\nabla g}(z)]\rVert^2 \right] \leq A_2h_n^{-r-2}n^{-1}\mathcal{W}(n),
\end{equation*}
where $\Theta_r$ {\color{black} is} defined in \eqref{def: real theta_r} and $A_2$ is a positive constant.
\end{lemma}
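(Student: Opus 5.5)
The plan is to reduce Lemma \ref{lemma 12} to the variance bound of Lemma 12 in \cite{tsybakov}, which is applied to $g$ and the design $z_i=Wx_i$ in $\mathbb{R}^r$. Lemma \ref{lemma : g holderienne} gives that $g$ is $\beta$-Hölder with the same constant $L$. Lemma \ref{lemma: formule densité} gives that the $z_i$ are i.i.d. with a density $\tilde p\le 1$ that is bounded below by $\tilde p_m$ on $W\Theta'$, and $W\Theta'$ contains the unit neighbourhood of $\Theta_r$ because $\Vert W\Vert_{\text{op}}\le 1$ and $WW^\top=I_r$. The noise and the kernel satisfy (\ref{ass: noise}) and (\ref{ass: assumption K}) directly. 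So the whole setting of \cite{tsybakov} holds in dimension $r$, and I will follow their proof while keeping $h_n,\lambda_n$ free and tracking where the logarithm enters.

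First I write $\widehat{\nabla g}(z)-\mathbb{E}\widehat{\nabla g}(z)=h_n^{-1}M\big(B_{n,\lambda_n}(z)^{-1}D_n(z)-\mathbb{E}[B_{n,\lambda_n}(z)^{-1}D_n(z)]\big)$. This produces the factor $h_n^{-2}$ in the squared bound. The regularization gives the deterministic bound $\Vert B_{n,\lambda_n}^{-1}\Vert_{\text{op}}\le\lambda_n^{-1}$. With high probability, uniformly in $z\in\Theta_r$, I also need the smallest eigenvalue of $B_n(z)$ to be at least a constant. This follows from $\tilde p\ge\tilde p_m$ near $\Theta_r$ and a matrix concentration inequality for the entries of $B_n(z)-\mathbb{E}B_n(z)$, whose summands are bounded by $C/(nh_n^r)$.

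Next I split the centred quantity into a fluctuation of $D_n$ and a fluctuation of $B_n$. Each is a supremum over $\Theta_r$ of a normalized kernel sum. To control the supremum I discretize $\Theta_r$ with an $\epsilon$-net of cardinality $\epsilon^{-r}$, using that $K$ is $L_K$-Lipschitz so the maps are Lipschitz in $z$ with constant $O(h_n^{-r-1})$. On each net point I apply Bernstein's inequality; for the noise part, the sub-Gaussian tails of (\ref{ass: noise}) are handled by truncation. Taking a union bound and integrating the tail gives $\mathbb{E}\sup\lesssim \log(\#\text{net})/(nh_n^r)$.

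The main obstacle is replacing this logarithm by $\mathcal{W}(n)$ and absorbing both the bad event and the truncation remainder. I would take $\epsilon$ polynomial in $n$, so that $\log(\#\text{net})$ becomes $\log(n^{a})$ for some fixed power $a$. Then I would convert with (\ref{prop log}) and (\ref{eq:on W}), which give $\mathcal{W}(n^{2+2/q})\le 5\mathcal{W}(n)$. The Bernstein threshold would be chosen of the form $t=\mathcal{W}(\cdot)/(nh_n^r)$, so that $t\,e^{\cdot}$ matches the defining identity $\mathcal{W}(x)e^{\mathcal{W}(x)}=x$.

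On the bad event, where the eigenvalue bound fails, I would use the crude bound $\lambda_n^{-2}$ times a bounded moment of $D_n$. Its probability is at most $e^{-c\,nh_n^r}\le e^{-c\mathcal{W}(n)}=\mathcal{W}(n)/n$. Combined with the hypothesis $nh_n^r\ge\lambda_n^{-2}\mathcal{W}(n)$, this contribution is at most of order $\mathcal{W}(n)/(nh_n^r)$. Assembling the pieces and multiplying by $h_n^{-2}$ gives $A_2h_n^{-r-2}n^{-1}\mathcal{W}(n)$.
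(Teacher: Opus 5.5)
\textbf{Gap: the random inverse.} Your architecture matches the paper's: transfer to $g$ and $z_i=Wx_i$ via Lemmas \ref{lemma : g holderienne} and \ref{lemma: formule densité}, the prefactor $h_n^{-2}$, and nets plus Bernstein plus a union bound for the suprema of kernel sums. The step that controls $B_{n,\lambda_n}(z)^{-1}$, however, has a genuine gap.

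You bound the probability that $\inf_{z\in\Theta_r}\mu_{\min}(B_n(z))$ falls below a constant by $e^{-c\,nh_n^r}$. This event is uniform in $z$, so it needs a union bound over a net whose cardinality is polynomial in $1/h_n$. That factor cannot be dropped:

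\begin{itemize}
\item $\Theta_r$ contains about $h_n^{-r}$ disjoint kernel windows.
\item Each window contains no design point with probability at least of order $e^{-c'\,nh_n^r}$, and then $B_n(z)=0$ at its centre.
\item Under the sole hypothesis $nh_n^r\geq\mathcal W(n)$, take $nh_n^r=\mathcal W(n)$. The expected number of empty windows is then of order $(n/\mathcal W(n))^{1-c'}$, which diverges when $c'<1$.
\end{itemize}

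So your ``good'' event need not have high probability at all. The subsequent arithmetic also does not close:

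\begin{itemize}
\item $e^{-c\mathcal W(n)}=(\mathcal W(n)/n)^c$, which equals $\mathcal W(n)/n$ only when $c=1$.
\item Even granting $\mathbb P(\text{bad})\leq\mathcal W(n)/n$, multiplying by $\lambda_n^{-2}\leq nh_n^r/\mathcal W(n)$ only gives $h_n^r$. This is not $O(\mathcal W(n)/(nh_n^r))$ in general.
\item $D_n$ contains the unbounded $\xi_i$, so a ``bounded moment of $D_n$'' on the bad event costs a H\"older exponent on $\mathbb P(\text{bad})$.
\end{itemize}

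\textbf{How the paper avoids this.} The paper never conditions on an event. It uses $\Vert B_{n,\lambda_n}(z)^{-1}\Vert_{\mathrm{op}}\leq\lambda_n^{-1}$ only inside
\[
\Vert B_{n,\lambda_n}^{-1}-(\mathbb E B_{n,\lambda_n})^{-1}\Vert_{\mathrm{op}}\leq\lambda_{\min}^{-1}\lambda_n^{-1}\Vert B_{n,\lambda_n}-\mathbb E B_{n,\lambda_n}\Vert_{\mathrm{op}}.
\]
This is combined with the net bound (\ref{eq:lemma20.1}), namely $\mathbb E\sup_z\Vert B_{n,\lambda_n}-\mathbb E B_{n,\lambda_n}\Vert_{\mathrm{op}}^4\lesssim(\mathcal W(n)/(nh_n^r))^2$. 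The hypothesis $nh_n^r\geq\lambda_n^{-2}\mathcal W(n)$ is exactly what makes $\lambda_n^{-4}(\mathcal W(n)/(nh_n^r))^2\leq 1$. Hence $\mathbb E\sup_z\Vert B_{n,\lambda_n}^{-1}\Vert_{\mathrm{op}}^4\lesssim\lambda_{\min}^{-4}$, and by Cauchy--Schwarz $\mathbb E\sup_z\Vert B_{n,\lambda_n}^{-1}-(\mathbb E B_{n,\lambda_n})^{-1}\Vert_{\mathrm{op}}^2\lesssim\mathcal W(n)/(nh_n^r)$ (Lemma \ref{lemma 20}).

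Writing $D_n=C_n+G_n$, three terms remain: $B_{n,\lambda_n}^{-1}G_n$, $B_{n,\lambda_n}^{-1}(C_n-\mathbb E C_n)$ and $(B_{n,\lambda_n}^{-1}-(\mathbb E B_{n,\lambda_n})^{-1})C_n$. Each is bounded by Cauchy--Schwarz together with your net/Bernstein fourth-moment estimates (Lemmas \ref{lemma 21}, \ref{lemma 23} and \ref{lemma 22}).

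\textbf{Keeping an event-based argument.} You would need at least a case split:
\begin{itemize}
\item If $\lambda_n$ is bounded below by a constant, the deterministic bound $\lambda_n^{-1}$ already suffices.
\item Otherwise $nh_n^r\geq\lambda_n^{-2}\mathcal W(n)$ is a large multiple of $\log n$. That absorbs the polynomial union-bound factor and the $\lambda_n^{-2}$ loss.
\end{itemize}

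\textbf{The step you call the main obstacle is harmless.} Since $\log x=\mathcal W(x)+\log\mathcal W(x)\leq 2\mathcal W(x)$ for $x\geq e$, any net of polynomial size already gives $\log(\#\mathcal N)\lesssim\mathcal W(n)$. The paper's Lambert-function choice of $\varepsilon_n$ only affects constants.
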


\begin{proof}
By Lemmas \ref{lemma : g holderienne} and \ref{lemma: formule densité}, the assumptions of {\color{black} Lemma 12 in} \cite{tsybakov}
%\cite[Lemma 12]{tsybakov} 
are satisfied for {\color{black} the function} $g$. Following the steps of the proof {\color{black} of Lemma 12} and choosing $k=n$ we {\color{black} have}
\begin{equation*}
        \mathbb{E}\left[\sup_{z \in \Theta_r} \lVert  \widehat{\nabla g}(z) - \mathbb{E}[\widehat{\nabla g}(z)]\rVert^2 \right]  \leq h_n^{-2}A\left(a+b+c\right),
\end{equation*}
where $A$ is a positive constant and 
\begin{align*}
&a = \left(\mathbb{E}\left[\sup_{z\in \Theta_r}\Vert B_{n,\lambda_n}(z)^{-1}\Vert^4_{\color{black}\text{op}}\right]\mathbb{E}\left[\sup_{z\in \Theta_r}\Vert G_n(z)\Vert^4_{\color{black}\text{op}}\right]\right)^{\frac{1}{2}},\\
&b = \left(\mathbb{E}\left[\sup_{z\in \Theta_r}\Vert B_{n,\lambda_n}(z)^{-1}\Vert^4_{\color{black}\text{op}}\right]\mathbb{E}\left[\sup_{z\in \Theta_r}\Vert C_{n}(z)- \mathbb{E}(C_n(z))\Vert^4_{\color{black}\text{op}}\right]\right)^{\frac{1}{2}},\\
&c = \mathbb{E}\left[\sup_{z\in \Theta_r}\Vert B_{n,\lambda_n}(z)^{-1}- (\mathbb{E}(B_{n,\lambda_n}{\color{black}(z)}))^{-1}\Vert^2_{\color{black}\text{op}}\right]\sup_{z\in \Theta_r}\mathbb{E}\left[\Vert C_{n}(z)\Vert^2_{\color{black}\text{op}}\right],
\end{align*}
\textcolor{black}{where $B_{n,\lambda_n}$ is defined in \eqref{def Bnlambda},
% Define, for any $z\in \Theta_r$, 
\begin{equation}\label{def: Gn}
 G_n(z) = n^{-1}h_n^{-r}\sum_{i=1}^n U\left(\frac{z_i-z}{h_n}\right) K\left(\frac{z_i-z}{h_n}\right)\xi_i,\;\forall z\in \Theta_r
 \end{equation}
 and
 \begin{equation}\label{def: Cn}
 %\forall z \in W\Theta, \hspace{1.5ex}
 C_{n}(z) = n^{-1}h_n^{-r}\sum_{i=1}^nU\left(\frac{z_i-z}{h_n}\right) K\left(\frac{z_i-z}{h_n}\right)g(\textcolor{black}{z_i}),\;\textcolor{black}{\forall z\in \Theta_r},
 \end{equation}
$U$ being defined in \eqref{def U and S}.}
{\color{black} Since $nh_n^r \geq \max\left(\lambda_n^{-2}\mathcal{W}(n),\mathcal{W}(n)\right)$ we can use \eqref{eq:lemma20.2} and \eqref{eq:lemma20.3} of Lemma \ref{lemma 20}  to get}
\begin{align*}
&a \leq A\lambda_{min}^{-4}\mathbb{E}\left[\sup_{z\in \Theta_r}\Vert  G_n(z)\Vert^4_{\color{black}\text{op}}\right]^{\frac{1}{2}}\\
&b \leq A\lambda_{min}^{-4}\mathbb{E}\left[\sup_{z\in \Theta_r}\Vert C_{n}(z)- \mathbb{E}(C_n(z))\Vert^4_{\color{black}\text{op}}\right]^{\frac{1}{2}}\\
& c \leq Ah_n^{-r}n^{-1}\mathcal{W}(n)\sup_{z\in \Theta_r}\mathbb{E}\left[\Vert C_{n}(z)\Vert^2_{\color{black}\text{op}}\right].
\end{align*}
{\color{black} To conclude the proof, we use} Lemma \ref{lemma 21} {\color{black} to bound the term} $a$, Lemma \ref{lemma 23} {\color{black} to bound the term} $b$ and {\color{black}Lemma \ref{lemma 22} to bound the term} $c$. 
%, {\color{magenta} so that }
%we obtain 
%\begin{equation*}
%a,b,c \leq Ah_n^{-r}n^{-1}\mathcal{W}(n).
%\end{equation*}
\end{proof}

\begin{lemma}\label{lemma 11bis} 

\textbf{Bound \textcolor{black}{for} the ponctual estimation \textit{bias}}

{\color{black} Let $\mathcal W$ be the Lambert function defined in \eqref{def: lambert functions}.}
{\color{black} Under \eqref{ass: assumption K} -- \eqref{ass: density}}, the estimator $\hat{g}$ of $g$ defined in \eqref{def: g estimation} for some kernel-bandwidth $h_n$ and regularisation {\color{black} parameter} $\lambda_n$ such that $nh_n^r \geq \max \left(\mathcal{W}(n),\lambda_n^{-2}\mathcal{W}(n)\right)$ satisfies
\begin{equation*}
\forall z \in \Theta_r,\quad \left\Vert \mathbb{E}\hat{g}(z) -  g(z) \right \Vert \leq {\color{black}A_1}\left(h_n^{\beta}+\lambda_n+h_n^{-\frac{r}{2}}n^{-\frac{1}{2}}\right)
\end{equation*}
{\color{black} where $\Theta_r$ is defined in \eqref{def: real theta_r} and $A_1$ is a positive constant.}
\end{lemma}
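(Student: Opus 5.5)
The plan mirrors the proof of Lemma \ref{lemma 11}: we reduce to the corresponding bias bound of \cite{tsybakov} for the zeroth-order coordinate of the local polynomial estimator rather than the gradient coordinates. By Lemmas \ref{lemma : g holderienne} and \ref{lemma: formule densité}, $g$ is $\beta$-H\"older on $\mathbb{R}^r$ with constant $L$. The design $z_i=Wx_i$ is i.i.d. with a density bounded above by $1$ and below by $\tilde p_m$ on $W\Theta'$, so the setting of \cite{tsybakov} applies with $d$ replaced by $r$. The only change is that $\widehat{\nabla g}(z)=h_n^{-1}MB_{n,\lambda_n}(z)^{-1}D_n(z)$ is replaced by $\hat g(z)=\widetilde M B_{n,\lambda_n}(z)^{-1}D_n(z)$. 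This removes the factor $h_n^{-1}$ and selects the first coordinate instead of coordinates $2,\dots,r+1$. Consequently every term of the bound in Lemma \ref{lemma 11} is multiplied by $h_n$, which gives exactly $h_n^{\beta}+\lambda_n+h_n^{-r/2}n^{-1/2}$.

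Concretely, I would write $D_n(z)=C_n(z)+G_n(z)$ with $C_n,G_n$ as in \eqref{def: Gn}--\eqref{def: Cn}. Since $\mathbb{E}(G_n(z)\mid z_1,\dots,z_n)=0$ by independence and centering of the noise, $\mathbb{E}\hat g(z)=\mathbb{E}[\widetilde MB_{n,\lambda_n}(z)^{-1}C_n(z)]$. Using the Taylor expansion of $g$ at $z$ of order $\ell=\lfloor\beta\rfloor$, write $g(z_i)=\theta(z)^\top U((z_i-z)/h_n)+R_i$ with $|R_i|\le L\|z_i-z\|^\beta\le Lh_n^\beta$ on the support of $K$ (assumption \eqref{ass: assumption K}). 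Here $\theta(z)$ is the vector of scaled derivatives, whose first coordinate is $g(z)$. Then
\[
C_n(z)=B_n(z)\theta(z)+\rho_n(z)=B_{n,\lambda_n}(z)\theta(z)-\lambda_n\theta(z)+\rho_n(z),
\]
so that
\[
\hat g(z)-g(z)\ \text{restricted to the noiseless part equals}\ \widetilde MB_{n,\lambda_n}(z)^{-1}\bigl(\rho_n(z)-\lambda_n\theta(z)\bigr).
\]
The remainder satisfies $\|\rho_n(z)\|\le Lh_n^\beta\, n^{-1}h_n^{-r}\sum_i\|U\|K$. Moreover $\|\theta(z)\|$ is bounded uniformly on $\Theta_r$, because $h_n\le 1$ and the derivatives of $g$ up to order $\ell$ are bounded on the compact set (assumptions \eqref{ass: holder} and \eqref{ass: bounded}).

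The main obstacle is controlling $\mathbb{E}[\|B_{n,\lambda_n}(z)^{-1}\|_{\rm op}\cdot(\cdots)]$, since the inverse is random. I would split it as $B_{n,\lambda_n}(z)^{-1}=(\mathbb{E}B_{n,\lambda_n}(z))^{-1}+\bigl(B_{n,\lambda_n}(z)^{-1}-(\mathbb{E}B_{n,\lambda_n}(z))^{-1}\bigr)$. The deterministic part has operator norm at most $\lambda_{\min}^{-1}$, by the lower bound $\tilde p_m$ on the density and the positivity of $K$. This part contributes $A(h_n^\beta+\lambda_n)$. The fluctuation part is handled by Cauchy--Schwarz together with the moment bounds \eqref{eq:lemma20.2}--\eqref{eq:lemma20.3} of Lemma \ref{lemma 20}, which are valid precisely under the condition $nh_n^r\ge\max(\mathcal W(n),\lambda_n^{-2}\mathcal W(n))$. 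That lemma yields $\mathbb{E}\|B_{n,\lambda_n}^{-1}-(\mathbb{E}B_{n,\lambda_n})^{-1}\|^2_{\rm op}\lesssim (nh_n^r)^{-1}\mathcal W(n)$. Combined with $\mathbb{E}\|C_n(z)-\mathbb{E}C_n(z)\|^2\lesssim (nh_n^r)^{-1}$ and the boundedness of $\mathbb{E}C_n$, this produces the term $h_n^{-r/2}n^{-1/2}$. Here I would use $\mathcal W(n)\le nh_n^r\lambda_n^2$ to absorb the Lambert factor into the $\lambda_n$ term, exactly as in the end of the proof of Lemma 11 of \cite{tsybakov} with $k=n$. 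Collecting the pieces gives the claimed bound with a constant $A_1$ depending only on $L$, $K$, $\tilde p_m$, $r$, $\beta$ and $\sup_{\Theta'}|f|$.
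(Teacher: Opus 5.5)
Your proposal is correct and follows the paper's route. The paper's proof simply notes that $\hat g$ differs from $\widehat{\nabla g}$ only by replacing $M$ with $\widetilde M$ and dropping the factor $h_n^{-1}$, so the bias bound of Lemma \ref{lemma 11} (Lemma 11 of \cite{tsybakov} applied to $g$ via Lemmas \ref{lemma : g holderienne} and \ref{lemma: formule densité}) carries over multiplied by $h_n$; this is exactly your first paragraph, and your decomposition $\widetilde M B_{n,\lambda_n}(z)^{-1}(\rho_n(z)-\lambda_n\theta(z))$ with the deterministic/fluctuation split via Lemmas \ref{lemma 17} and \ref{lemma 20} is a sound reconstruction of the cited argument. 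One bookkeeping correction: in your own decomposition the fluctuation term pairs with $\rho_n-\lambda_n\theta$, not with $C_n-\mathbb{E}C_n$. Since $\rho_n-\lambda_n\theta$ has bounded second moment, that term is $O(\lambda_n)$ once you use $\mathcal W(n)\le nh_n^r\lambda_n^2$.
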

\begin{proof}
{\color{black} The proof is analogous to the proof of Lemma \ref{lemma 11}, where instead of the gradient estimator $\widehat{\nabla g}$ defined in \eqref{def: ghat}  we consider the estimator $\hat g$ of $g$. If we compare the two definitions, we can see that the only difference in the definition of $\hat g$ is that the matrix $M$ is replaced by the matrix $\widetilde M$ and that there is no factor $h_n^{-1}$. This implies that} we obtain the same bound as in Lemma \ref{lemma 11} up to the factor $h_n$.
%{The definition of $\hat g$ is analogous to the definition of $\widehat{\nabla g}$: $M$ is replaced by $\tilde{M}$ and there is no factor $h_n^{-1}$ (see Eq \eqref{def: ghat}). Thus one can aslo follow the steps of Lemma 11 of \cite{tsybakov} and get the same bound as in Lemma \ref{lemma 11} up to the factor $h_n$.}
\end{proof}

\begin{lemma}\label{lemma 12bis}\textbf{Bound of the ponctual estimation \textit{variance}}

{\color{black} Let $\mathcal W$ be the Lambert function defined in \eqref{def: lambert functions}.}
{\color{black} Under \eqref{ass: assumption K} -- \eqref{ass: density}}, the estimator $\hat{g}$ of $g$ defined in \eqref{def: g estimation} for some kernel-bandwidth $h_n$ and regularisation {\color{black} parameter} $\lambda_n$ such that $nh_n^r \geq \max \left(\mathcal{W}(n),\lambda_n^{-2}\mathcal{W}(n)\right)$, satisfies
\begin{equation*}
        \mathbb{E}\left[\sup_{z \in \Theta_r} \lVert  \hat g(z) - \mathbb{E}[\hat g(z)]\rVert^2 \right] \leq {\color{black}A_2}h_n^{-r}n^{-1}\mathcal{W}(n)
\end{equation*}
{\color{black} where $\Theta_r$ is defined in \eqref{def: real theta_r} and $A_2$ is a positive constant.}
\end{lemma}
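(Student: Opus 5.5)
The plan mirrors the proof of Lemma \ref{lemma 12}, which itself follows Lemma 12 of \cite{tsybakov} with $k=n$. The only change is that $M$ is replaced by $\widetilde M$ and there is no prefactor $h_n^{-1}$. This is why the bound loses the factor $h_n^{-2}$ relative to Lemma \ref{lemma 12}.

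First, by Lemmas \ref{lemma : g holderienne} and \ref{lemma: formule densité}, $g$ is $\beta$-H\"older on $\mathbb R^r$ and $z_1,\dots,z_n$ have a density bounded above by $1$ and below by $\tilde p_m$ on $W\Theta'$. Hence the framework of \cite{tsybakov} applies to model \eqref{eq: reg g}. Next, I write $D_n(z)=C_n(z)+G_n(z)$, with $C_n$ and $G_n$ as in \eqref{def: Cn} and \eqref{def: Gn}, and decompose
\[
\hat g(z)-\mathbb E\hat g(z)=\widetilde M\Bigl[B_{n,\lambda_n}(z)^{-1}G_n(z)+B_{n,\lambda_n}(z)^{-1}\bigl(C_n(z)-\mathbb E C_n(z)\bigr)+\bigl(B_{n,\lambda_n}(z)^{-1}-(\mathbb E B_{n,\lambda_n}(z))^{-1}\bigr)\mathbb E C_n(z)\Bigr]-R(z),
\]
where $R(z)$ collects the remaining correlation term handled in \cite{tsybakov}. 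Using $\Vert\widetilde M\Vert_{\text{op}}\le 1$, the inequality $(u+v+w)^2\le 3(u^2+v^2+w^2)$ and Cauchy--Schwarz on the products, I obtain
\[
\mathbb E\Bigl[\sup_{z\in\Theta_r}|\hat g(z)-\mathbb E\hat g(z)|^2\Bigr]\le A(a+b+c),
\]
with exactly the same $a,b,c$ as in the proof of Lemma \ref{lemma 12}, but without the $h_n^{-2}$ factor.

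Second, the hypothesis $nh_n^r\ge\max(\mathcal W(n),\lambda_n^{-2}\mathcal W(n))$ is the one needed in \eqref{eq:lemma20.2} and \eqref{eq:lemma20.3} of Lemma \ref{lemma 20}. Those results give $\mathbb E[\sup_z\Vert B_{n,\lambda_n}(z)^{-1}\Vert_{\text{op}}^4]\le A\lambda_{min}^{-4}$ and $\mathbb E[\sup_z\Vert B_{n,\lambda_n}^{-1}-(\mathbb EB_{n,\lambda_n})^{-1}\Vert^2_{\text{op}}]\le Ah_n^{-r}n^{-1}\mathcal W(n)$. I then invoke the three remaining lemmas. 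Lemma \ref{lemma 21} bounds $\mathbb E[\sup_z\Vert G_n\Vert^4]^{1/2}$ by $Ah_n^{-r}n^{-1}\mathcal W(n)$, via a chaining or union bound over a grid of $\Theta_r$ with the sub-Gaussian noise, which is where $\mathcal W(n)$ enters. Lemma \ref{lemma 23} gives the same bound for $C_n-\mathbb EC_n$. Lemma \ref{lemma 22} shows that $\sup_z\mathbb E\Vert C_n(z)\Vert^2$ is bounded, since $g$ is bounded on $W\Theta'$ by \eqref{ass: bounded} and $K$ is bounded with compact support. Combining these gives $a,b,c\le Ah_n^{-r}n^{-1}\mathcal W(n)$, hence the claim with some constant $A_2$.

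The main obstacle is checking that none of the cited lemmas secretly relies on the specific choice of $h_n,\lambda_n$ from \cite{tsybakov}, and that the term $c$ indeed only needs $\sup_z \mathbb E\Vert C_n(z)\Vert^2$ rather than a supremum inside the expectation. For $c$, I would first try splitting $\mathbb E C_n$ from $C_n-\mathbb EC_n$. The deterministic part is handled with the inverse-difference bound, and the fluctuation part is absorbed into a term of type $b$.
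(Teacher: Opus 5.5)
Your proposal is correct and follows essentially the paper's route. The paper proves this lemma by rerunning the proof of Lemma~\ref{lemma 12} with $M$ replaced by $\widetilde M$ and without the factor $h_n^{-1}$: it bounds the variance by $A(a+b+c)$, then controls the inverse terms with \eqref{eq:lemma20.2}--\eqref{eq:lemma20.3} and concludes with Lemmas~\ref{lemma 21}, \ref{lemma 23} and \ref{lemma 22}, exactly as you do.

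Your worry about $c$ is already settled by your own decomposition. There the inverse-difference multiplies the deterministic vector $\mathbb{E}C_n(z)$, and $\Vert\mathbb{E}C_n(z)\Vert^2\le\mathbb{E}\Vert C_n(z)\Vert^2$. The deterministic remainder $R(z)$ is bounded by Jensen's inequality through the $b$ and $c$ terms.
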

\begin{proof}
  {\color{black} The proof is analogous to the proof of Lemma \ref{lemma 12}, where instead of the gradient estimator $\widehat{\nabla g}$ defined in \eqref{def: ghat}  we consider the estimator $\hat g$ of $g$. If we compare the two definitions, we can see that the only difference in the definition of $\hat g$ is that the matrix $M$ is replaced by the matrix $\widetilde M$ and that there is no factor $h_n^{-1}$.}
  %This implies that} we obtain the same bound as in Lemma \ref{lemma 12} up to the factor $h_n^2$.

%The definition of $\hat g$ is analogous to the definition of $\widehat{\nabla g}$: $M$ is replaced by $\tilde{M}$ and there is no factor $h_n^{-1}$ (see Eq \eqref{def: ghat}). Thus one can aslo follow the steps of Lemma 12 of \cite{tsybakov} and get the same bound as in Lemma \ref{lemma 12} up to the factor $h_n^2$.
\end{proof}

\subsection{\color{black} Technical lemmas used for proving the lemmas of Section \ref{lemmas:gradient_estimation}}

{\color{black}
In this section we introduce the technical results used for proving Lemmas \ref{lemma 11}, \ref{lemma 12}, \ref{lemma 11bis} and \ref{lemma 12bis}.
More precisely, Lemmas \ref{lemma 17} and \ref{lemma 22} correspond to Lemmas 17 and 22 of \cite{tsybakov}, respectively. As in the previous section, the results in \cite{tsybakov} hold for the function $f$, while the results presented here hold for the function $g$ defined in \eqref{def: g}. A sketch of the proof is provided for those lemmas. 
Lemma \ref{lemma 20} corresponds to Lemma 20 of \cite{tsybakov} and since it is the result that differs the most from the original one,
% In particular, we replace the logarithmic factor of Lemma 20 by the Lambert function $\mathcal W$ defined in \eqref{def: lambert functions}.
a complete proof is given hereafter. 
}

\begin{lemma}\label{lemma 17}
%{\color{magenta} According to the notation of the previous section, let $\Theta_r$ be the subset defined in \eqref{def: real theta_r}.
% For all $i \in {\color{black}\{1,\dots, n\}}$, let, for any $z\in \Theta_r$,
% \[
% R_{i}(z) = U\left( \frac{z_i-z}{h_n}\right) K\left( \frac{z_i-z}{h_n}\right)
% \]
% where $U$ is defined in \eqref{def U and S} and $K$ is a kernel {\color{black} satisfying \eqref{ass: assumption K}.}
% For any $q \geq 1$, let
% $\nu_q = \int_{\mathbb{R}^d} \|U(u)K(u)\|^q \, du$. Under \eqref{ass: density}, \textcolor{black}{for any $i \in \{1,\dots, n\}$}, we have
% \[
% \sup_{z \in \Theta_r} \mathbb{E}\left(\|R_{i}({\color{black}z})\|^q\right) \leq \nu_q h_n^r
% \]
\textcolor{black}{Under assumptions (\ref{ass: assumption K}) and (\ref{ass: density})} there exists a positive constant $\lambda_{\min}$ such that
\[
\inf_{z \in \Theta_r} {\color{black}\mu_{\min} }(\mathbb{E}(B_n(z))) \geq \lambda_{\min},
\]
{\color{black}where $B_n$ is defined in \eqref{def Bn}} and {\color{black} $\mu_{\min}(\mathbb{E}(B_n(z)))$ denotes the smallest eigenvalue of $\mathbb{E}(B_n(z))$.}
%\sout{\color{orange} [le $\lambda_{min}$ \`a droite et celui \`a gauche ne repr\'esentent pas la meme chose ou je me trompe ?]}
\end{lemma}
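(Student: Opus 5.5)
We will show that the expected local Gram matrix at any point $z\in\Theta_r$ is bounded below, uniformly in $z$ and $h_n$, by a fixed positive-definite matrix coming from the kernel alone. Write
\[
\mathbb{E}(B_n(z)) = h_n^{-r}\int_{\mathbb{R}^r} U\Big(\frac{t-z}{h_n}\Big)U\Big(\frac{t-z}{h_n}\Big)^{\top} K\Big(\frac{t-z}{h_n}\Big)\tilde p(t)\,dt
= \int_{\mathbb{R}^r} U(u)U(u)^{\top}K(u)\,\tilde p(z+h_n u)\,du,
\]
using that $z_1,\dots,z_n$ are i.i.d.\ with density $\tilde p$ by Lemma \ref{lemma: formule densité}, and the change of variables $t=z+h_nu$.

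\textbf{Step 1: localisation.} By (\ref{ass: assumption K}), $K$ is supported in the unit Euclidean ball, so only $\|u\|\le 1$ contributes. We may assume $h_n\le 1$; this holds for the values in (\ref{def: values hn et lambdan}), since $\mathcal W(n)\le n$ by (\ref{eq:croissance W}). Then $z+h_nu = W(x+W^\top h_n u)$ with $z=Wx$, $x\in\Theta$. Since $\|W^\top h_nu\|\le 1$, the point $x+W^\top h_n u$ lies in $\Theta'$, and hence $z+h_nu\in W\Theta'$. Lemma \ref{lemma: formule densité} then gives $\tilde p(z+h_nu)\ge \tilde p_m$.

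\textbf{Step 2: reduction to a fixed matrix.} For any unit vector $v\in\mathbb{R}^S$, Step 1 gives
\[
v^\top \mathbb{E}(B_n(z)) v \ge \tilde p_m \int (v^\top U(u))^2 K(u)\,du = \tilde p_m\, v^\top \Big(\int U U^\top K\Big) v .
\]
Set $\lambda_{\min}=\tilde p_m\,\mu_{\min}\big(\int UU^\top K\big)$. This constant does not depend on $z$ or $n$, so taking the infimum over $z\in\Theta_r$ gives the claim, provided the fixed matrix is positive definite.

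\textbf{Step 3: positive definiteness (the main point).} Here $U$ is the monomial vector in dimension $r$, with $S$ reinterpreted accordingly. Suppose $\int (v^\top U(u))^2K(u)\,du=0$. Then the polynomial $v^\top U$ vanishes almost everywhere on $\{K>0\}$. This set is open and nonempty, because $K$ is continuous (it is Lipschitz), nonnegative and integrates to $1$. A polynomial vanishing on a nonempty open set is identically zero. The monomials $u^{\m}/\m!$ are linearly independent, so $v=0$, and the matrix is positive definite.

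The only delicate points are checking that $z+h_nu$ stays in the region where the lower bound on $\tilde p$ holds, which is where $\Theta'$ and $h_n\le1$ enter, and the support argument in Step 3.
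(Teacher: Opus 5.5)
Your proof is correct and takes essentially the paper's route. The paper transfers the density lower bound to the $z_i$'s ($\tilde p\ge\tilde p_m$ on $W\Theta'$) and then cites Lemma 17 of \cite{tsybakov}; your Steps 2--3 (the change of variables, the bound $\tilde p\ge\tilde p_m$, and positive definiteness of $\int UU^\top K$) reproduce the content of that cited lemma. Your Step 1 also makes explicit a point the paper leaves implicit: $W\Theta+\{h_n u:\Vert u\Vert\le 1\}\subset W\Theta'$ when $h_n\le 1$, so the lower bound on $\tilde p$ really holds on the whole integration region.
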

\begin{proof} {\color{black} By Lemma \ref{lemma: formule densité}, we can apply Lemma 17 of \cite{tsybakov} to the function  $g$
with $k=n$.}
%We apply Lemma 17 of \cite{tsybakov}, adapt the notations to $g$, take $k=n$ and use $\tilde{p}_M = 1$ (see Lemma \ref{lemma: formule densité}).
\end{proof}

\begin{lemma}\label{lemma 22}
  {\color{black}Let $\Theta_r$ be defined in \eqref{def: real theta_r}.}
  Under \eqref{ass: assumption K}, \eqref{ass: bounded} of \eqref{ass: regularity f} and \eqref{ass: density}, \textcolor{black}{there exists a positive constant $A$ such that, if $nh_n^r\geq 1$, the matrix $C_n$ defined  in \eqref{def: Cn} satisfies}
\begin{equation}\label{eq:lemma22.1}
\sup_{z \in \Theta_r} \mathbb{E}
\Bigl(
\bigl\| C_{n}(z) - \mathbb{E}(C_{n}(z)) \bigr\|_{\mathrm{op}}^{4}
\Bigr)
\leq An^{-2}h_n^{-2r},
\end{equation}
%\textcolor{black}{where $\Theta_r$ is defined in \eqref{def: real theta_r}}
{\color{black}and}
\begin{equation}\label{eq:lemma22.2}
\sup_{z \in \Theta_r} \mathbb{E}
\bigl\| C_{n}(z) \bigr\|_{\mathrm{op}}^{4}
\leq A.
\end{equation}
\end{lemma}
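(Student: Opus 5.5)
The plan is to write $C_n(z)=n^{-1}\sum_{i=1}^n V_i(z)$ with
\[
V_i(z)=h_n^{-r}\,U\!\left(\frac{z_i-z}{h_n}\right)K\!\left(\frac{z_i-z}{h_n}\right)g(z_i).
\]
For fixed $z\in\Theta_r$ the $V_i(z)$ are i.i.d. vectors in $\mathbb{R}^S$ (by Lemma \ref{lemma: formule densité} the $z_i$ are i.i.d. with density $\tilde p\le 1$). Since $C_n(z)$ is a vector, $\|\cdot\|_{\mathrm{op}}$ is just the Euclidean norm. The key structural facts are three. First, $K$ vanishes outside the unit Euclidean ball by (\ref{ass: assumption K}). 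Second, on that support $\|U((z_i-z)/h_n)\|$ is bounded by a constant depending only on $S$, because all monomials of degree $\le\ell$ in a vector of norm $\le1$ are bounded. Third, when $K((z_i-z)/h_n)\neq0$ and $h_n\le1$, $z_i$ lies within distance $1$ of $z\in W\Theta$, so $z_i=Wx$ with $x\in\Theta'$ (take $x=x_0+W^\top(z_i-z)$ for $z=Wx_0$). Hence $|g(z_i)|=|f(x)|\le \|f\|_{\infty,\Theta'}$ by (\ref{ass: bounded}). I will assume $h_n\le1$, which holds for the choices (\ref{def: values hn et lambdan}); otherwise one uses the support of $p$ and the bound on $f$ directly. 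Combining these gives the deterministic bound $\|V_i(z)\|\le c\,h_n^{-r}\,\mathds{1}\{\|z_i-z\|\le h_n\}$ with $c$ independent of $z$.

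From this I get moment bounds uniform in $z$. For $q\ge1$,
\[
\mathbb{E}\|V_i(z)\|^q\le c^q h_n^{-rq}\,\mathbb{P}(\|z_i-z\|\le h_n)\le c^q h_n^{-rq}\, v_r h_n^r,
\]
using $\tilde p\le1$, where $v_r$ is the volume of the unit ball in $\mathbb{R}^r$. Thus $\mathbb{E}\|V_i\|^q\le A h_n^{-r(q-1)}$. In particular $\|\mathbb{E}V_i(z)\|\le \mathbb{E}\|V_i(z)\|\le A$.

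For (\ref{eq:lemma22.1}), set $Y_i=V_i(z)-\mathbb{E}V_i(z)$, which are centered i.i.d. vectors. Expanding the fourth moment of the sum coordinatewise (or applying a Rosenthal / Marcinkiewicz–Zygmund inequality in $\mathbb{R}^S$) gives
\[
\mathbb{E}\Big\|\sum_i Y_i\Big\|^4\le A\big(n\,\mathbb{E}\|Y_1\|^4+n^2(\mathbb{E}\|Y_1\|^2)^2\big).
\]
Both $\mathbb{E}\|Y_1\|^4\le 16\,\mathbb{E}\|V_1\|^4\le Ah_n^{-3r}$ and $\mathbb{E}\|Y_1\|^2\le Ah_n^{-r}$ hold. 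Dividing by $n^4$ yields
\[
A\big(n^{-3}h_n^{-3r}+n^{-2}h_n^{-2r}\big)=A\,n^{-2}h_n^{-2r}\big(1+(nh_n^r)^{-1}\big)\le 2A\,n^{-2}h_n^{-2r},
\]
using $nh_n^r\ge1$. The constants do not depend on $z$, so the supremum over $\Theta_r$ is harmless.

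For (\ref{eq:lemma22.2}), write $\|C_n\|^4\le 8\|C_n-\mathbb{E}C_n\|^4+8\|\mathbb{E}C_n\|^4$. The first term is at most $A n^{-2}h_n^{-2r}\le A$ by the first part and $nh_n^r\ge1$. The second is at most $A$ by $\|\mathbb{E}V_i\|\le A$.

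The main obstacle is the localization step, namely that $z_i$ within $h_n$ of $z$ gives $|g(z_i)|$ bounded. This requires care in relating $W\Theta$ plus a unit ball to $W\Theta'$ and in ensuring $h_n\le1$. Everything after that is the routine fourth-moment computation.
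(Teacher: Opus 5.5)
Your proof is correct, and it takes a genuinely different route from the paper. The paper does no computation. It notes that, by Lemma \ref{lemma: formule densité}, the hypotheses of Lemma 22 of \cite{tsybakov} hold for $g$ with design $z_i=Wx_i$, and reads off the bound with $k=n$.

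You instead reprove that lemma directly, in three steps:
\begin{itemize}
\item The localization $z_i=W\bigl(x_0+W^\top(z_i-z)\bigr)$, with $x_0+W^\top(z_i-z)\in\Theta'$, gives the deterministic bound $\|V_i(z)\|\le c\,h_n^{-r}\mathds{1}\{\|z_i-z\|\le h_n\}$. This is exactly where \eqref{ass: bounded} enters.
\item The bounded marginal density gives $\mathbb{E}\|V_i(z)\|^q\le A h_n^{-r(q-1)}$ uniformly in $z$.
\item For centered i.i.d.\ vectors, $\mathbb{E}\|\sum_i Y_i\|^4\le n\,\mathbb{E}\|Y_1\|^4+3n(n-1)(\mathbb{E}\|Y_1\|^2)^2$. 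Together with $nh_n^r\ge1$, which absorbs the $n^{-3}h_n^{-3r}$ term, this yields \eqref{eq:lemma22.1}. Then \eqref{eq:lemma22.2} follows as you say.
\end{itemize}
The citation is shorter. Your version is self-contained and shows exactly which hypotheses matter. In particular, it makes visible that one needs $h_n\le 1$. This is absent from the statement, but it holds for the bandwidth \eqref{def: values hn et lambdan} since $\mathcal W(n)\le n$, and it is assumed explicitly in Lemma \ref{lemma 20}.

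Two caveats. First, your fallback for $h_n>1$ (``use the support of $p$ and the bound on $f$ directly'') is not supported by the assumptions. The support of $p$ need not be bounded, and $f$ is only assumed bounded on $\Theta'$. Simply state $h_n\le 1$ as a hypothesis.

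Second, your step $\mathbb{P}(\|z_i-z\|\le h_n)\le v_r h_n^r$ rests, like the paper's citation, on the bound $\tilde p\le 1$ of Lemma \ref{lemma: formule densité}. The proof given there bounds an integral over the $(d-r)$-dimensional slice $\ker(W)+W^\top z$ by $\int_{\mathbb{R}^d}p=1$. That is not valid in general: a marginal density can exceed $1$. Your argument only needs $\sup_{W\Theta'}\tilde p<\infty$, and it is cleaner to state that explicitly as the input you use.
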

\begin{proof}
By Lemma \ref{lemma: formule densité}, {\color{black} the assumptions of} Lemma 22 in \cite{tsybakov}
are satisfied for the function $g$.  The inequality is obtained {\color{black} by choosing} $k=n$.
%By Lemma \ref{lemma: formule densité}, we can apply Lemma 22 of \cite{tsybakov} to $g$ and we take $k=n$.  
\end{proof}

\begin{lemma}\label{lemma 20} {\color{black} Let $n\geq3$.}
%{\color{magenta} Let $\mathcal W$ be the Lambert function defined in \eqref{def: lambert functions} and let $\Theta_r$ be the subset defined in \eqref{def: real theta_r}.}
{\color{black}Under \eqref{ass: assumption K},  \eqref{ass: regularity f}, \eqref{ass: noise} } and \eqref{ass: density}, for {\color{black} a bandwidth $h_n\leq 1$ such that} $nh_n^r \geq \mathcal W(n)$, \textcolor{black}{there exists a positive constant $A$ such that}
\begin{equation}\label{eq:lemma20.1}
\mathbb{E}
\left(\sup_{{\color{black}z} \in \Theta_r}
\bigl\| B_{n,\lambda_n}({\color{black}z}) - \mathbb{E}(B_{n,\lambda_n}({\color{black}z})) \bigr\|_{\mathrm{op}}^{4}
\right)
\leq A h_n^{-2r} n^{-2}
\mathcal{W}(n)^2, 
\end{equation}
where \textcolor{black}{$\Theta_r$, $B_{n,\lambda_n}$ and $\mathcal W$ are defined in \eqref{def: real theta_r}, \eqref{def Bnlambda} and \eqref{def: lambert functions}, respectively.}\\
{\color{black} Furthermore,} if $nh_n^r\geq \lambda_n^{-2}\mathcal{W}(n),$ we have 
\begin{equation}\label{eq:lemma20.2}
\mathbb{E}\left(\sup_{{\color{black}z} \in \Theta_r}\Vert B_{n,\lambda_n}({\color{black}z})^{-1} \Vert_{\mathrm{op}}^4 \right) \leq {\color{black} 16} \lambda_{\text{min}}^{-4},
\end{equation}
{\color{black} where $\lambda_{\text{min}}$ is defined in Lemma \ref{lemma 17},} and 
\begin{equation}\label{eq:lemma20.3}
\mathbb{E}
\left(\sup_{{\color{black}z} \in \Theta_r}
\bigl\| B_{n,\lambda_n}({\color{black}z})^{-1} - (\mathbb{E}(B_{n,\lambda_n}({\color{black}z})))^{-1} \bigr\|_{\mathrm{op}}^{2}
\right)
\leq A h_n^{-r} n^{-1}
\mathcal{W}(n).
\end{equation}
%{\color{black} [il me semble que ici $A=A_n = \lambda_{min}^{-2}\lambda_n^{-2}$]}
\end{lemma}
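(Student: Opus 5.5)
We need to prove Lemma 20: a uniform matrix concentration bound with Lambert factor, plus the two inverse bounds.

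The approach follows Lemma 20 of \cite{tsybakov}, with the union bound over a net tuned so that the logarithmic factor becomes $\mathcal W(n)$. Since $\lambda_n I_S$ is deterministic, $B_{n,\lambda_n}(z)-\mathbb{E}(B_{n,\lambda_n}(z)) = B_n(z)-\mathbb{E}(B_n(z))$. We write this difference as $n^{-1}h_n^{-r}\sum_{i=1}^n (\zeta_i(z)-\mathbb{E}\zeta_i(z))$ with $\zeta_i(z)=U(\frac{z_i-z}{h_n})U(\frac{z_i-z}{h_n})^\top K(\frac{z_i-z}{h_n})$. These are i.i.d.\ bounded matrices: $U$ is bounded on the unit ball and $K$ is supported there and bounded by \eqref{ass: assumption K}. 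By Lemma \ref{lemma: formule densité}, $\tilde p\le 1$, so $\mathbb{E}\|\zeta_i(z)\|^2_{\mathrm{op}}\le A h_n^r$.

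\textbf{Pointwise step.} For fixed $z$ we apply a Bernstein inequality entrywise (there are $S^2$ entries, and $\|\cdot\|_{\mathrm{op}}$ is bounded by the Frobenius norm). This gives
\[
\mathbb{P}\bigl(\|B_n(z)-\mathbb{E}B_n(z)\|_{\mathrm{op}}>t\bigr)\le 2S^2\exp\bigl(-c\, n h_n^r \min(t^2,t)\bigr).
\]

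\textbf{Uniform step.} The map $z\mapsto B_n(z)$ is Lipschitz with constant $A h_n^{-r-1}$, because $K$ is $L_K$-Lipschitz and $U$ is polynomial on a bounded set. Cover the compact set $\Theta_r$ by an $\epsilon$-net of cardinality $(A/\epsilon)^r$ with $\epsilon = h_n^{r+1}n^{-1}$ (or a similar power of $n$). The discretisation error is then $O(1/n)$, which is negligible against $(\mathcal W(n)/(nh_n^r))^{1/2}$. The union bound multiplies the tail by $N_\epsilon\le A n^{a}$ for some fixed power $a$, since $h_n\ge (\mathcal W(n)/n)^{1/r}\ge n^{-1/r}$.

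\textbf{Integration step (main obstacle).} We integrate $\mathbb{E}\sup\|\cdot\|^4=\int_0^\infty 4t^3\,\mathbb{P}(\sup>t)\,dt$. Split at the threshold $t_0^2 = \kappa\, \mathcal W(n^{a})/(nh_n^r)$, which is chosen so that $n^a e^{-c n h_n^r t_0^2}$ is controlled. The key point is to use $\mathcal W$ rather than $\log$. The threshold solves $x e^x = n^a$-type equations, so its tail integral is bounded exactly by $\mathcal W$ terms. Then \eqref{prop log} and \eqref{eq:on W} of Lemma \ref{lemma: proporties lambert function} reduce $\mathcal W(n^a)$ to $A\mathcal W(n)$. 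The condition $nh_n^r\ge\mathcal W(n)$ guarantees $t_0\lesssim 1$, so the linear (non-Gaussian) part of the Bernstein regime stays harmless. This yields \eqref{eq:lemma20.1} with the bound $A h_n^{-2r}n^{-2}\mathcal W(n)^2$. I expect the careful bookkeeping of this Lambert-function tail integration to be the hardest part.

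\textbf{Inverse bounds.} By Lemma \ref{lemma 17}, $\mu_{\min}(\mathbb{E}B_{n,\lambda_n}(z))\ge \lambda_{\min}+\lambda_n$. Define the good event $E=\{\sup_z\|B_n-\mathbb{E}B_n\|_{\mathrm{op}}\le \lambda_{\min}/2\}$. On $E$, Weyl's inequality gives $\|B_{n,\lambda_n}^{-1}\|_{\mathrm{op}}\le 2/\lambda_{\min}$. Off $E$ we always have $\|B_{n,\lambda_n}^{-1}\|_{\mathrm{op}}\le\lambda_n^{-1}$ because $B_n\succeq 0$. The tail bound from the uniform step gives $\mathbb{P}(E^c)\lesssim \exp(-c\,nh_n^r)$ up to polynomial factors. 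The hypothesis $nh_n^r\ge\lambda_n^{-2}\mathcal W(n)$ makes $\lambda_n^{-4}\mathbb{P}(E^c)$ bounded by a constant times $\lambda_{\min}^{-4}$; with careful constants (Markov applied to \eqref{eq:lemma20.1}) this gives the factor $16$ in \eqref{eq:lemma20.2}.

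For \eqref{eq:lemma20.3}, use the resolvent identity $B^{-1}-\bar B^{-1}=B^{-1}(\bar B-B)\bar B^{-1}$. Then apply Cauchy–Schwarz together with \eqref{eq:lemma20.1}, \eqref{eq:lemma20.2} and $\|\bar B^{-1}\|_{\mathrm{op}}\le\lambda_{\min}^{-1}$. This gives $A\bigl(h_n^{-2r}n^{-2}\mathcal W(n)^2\bigr)^{1/2}=A h_n^{-r}n^{-1}\mathcal W(n)$.
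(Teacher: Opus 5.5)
Your proposal is correct. It shares the paper's skeleton: entrywise Bernstein (Lemma \ref{lemma 27}), an $\varepsilon$-net combined with the $h_n^{-r-1}$ Lipschitz bound (Lemma \ref{lem:lipschitz}), tail integration split between the quadratic and linear Bernstein regimes (Lemma \ref{lemma 26}), and then the perturbation identity for the inverses. The real difference is in how you choose the net.

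\textbf{The net.} The paper takes the Lambert-defined mesh $\varepsilon_n$ of \eqref{epsilon_n}. It satisfies $\varepsilon_n^2 n h_n^{-r-2}=\log(2|\Theta_r|/\varepsilon_n)$, so the discretisation term $h_n^{-4r-4}\varepsilon_n^4$ is exactly equal to the union-bound term $n^{-2}h_n^{-2r}\log(2|\Theta_r|/\varepsilon_n)^2$. That balancing has a cost:
\begin{itemize}
\item side checks $\varepsilon_n\le|\Theta_r|$ and \eqref{claim} before Lemma \ref{lemma 26} can be applied;
\item a final reduction of $\mathcal W(8nh_n^{-r-2}|\Theta_r|^2)$ to $\mathcal W(n)$ through \eqref{prop log}, $nh_n^{-r-2}\le n^{2+2/r}$ and \eqref{eq:on W}.
\end{itemize}
Your polynomially fine mesh $\epsilon=h_n^{r+1}/n$ makes the discretisation error negligible outright. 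It pays only $\log\#\mathcal N=O(\log n)$ in the union bound, which is simpler.

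\textbf{The Lambert bookkeeping.} The step you expect to be the main obstacle is actually harmless. From $\log x=\mathcal W(x)+\log\mathcal W(x)\le 2\mathcal W(x)$ one gets $\tfrac12\log n\le\mathcal W(n)\le\log n$ for $n\ge 3$. So any bound in $(\log n)^2$ becomes a bound in $\mathcal W(n)^2$ at the cost of a factor $4$, and $\mathcal W(n^a)\le 2a\,\mathcal W(n)$ follows directly. Note that \eqref{eq:on W} as stated only covers exponents up to $4$; with $a=2r+1$ you would otherwise have to iterate it.

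\textbf{The inverse bound \eqref{eq:lemma20.2}.} Your good-event/Weyl argument, using Markov and \eqref{eq:lemma20.1}, is equivalent to the paper's use of $\|B_{n,\lambda_n}(z)^{-1}\|_{\mathrm{op}}\le\lambda_n^{-1}$ in the perturbation inequality. Use the Markov version rather than the exponential-tail one: $nh_n^r$ may only be of order $\mathcal W(n)$, so $n^a e^{-c\,nh_n^r}$ need not be small without a case analysis.

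Either way, what you obtain is $16(1+A)\lambda_{\min}^{-4}$, where $A$ is the constant of \eqref{eq:lemma20.1}, not literally $16\lambda_{\min}^{-4}$. The paper's own derivation has the same looseness: it really gives $8(1+A)\lambda_{\min}^{-4}$ and silently drops $A$. Only a constant multiple of $\lambda_{\min}^{-4}$ is needed in Lemma \ref{lemma 12}.

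Your proof of \eqref{eq:lemma20.3} coincides with the paper's.
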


\begin{proof}
%{\color{magenta} According to the notation introduced in Section \ref{section: method}, let $S$ be cardinality of the set $\{ {\bf m} \in \mathbb N^d\ ;\ |{\bf m}|\leq \ell \}$ and $U$ be defined in \eqref{def U and S}.} {\color{orange} [$S$ ou $d+1$?]} 
{\color{black}According to the notation introduced in Section \ref{section:variable selection}} for $s,s' \in {\color{black}\{1,\dots, S\}}$ and $i \in {\color{black}\{1,\dots, n\}}$, define the functions 
\begin{align}\label{eq:def_phi}
\phi^{(s,s')}_i : \quad & \Theta_r \longrightarrow \mathbb{R}\nonumber\\
& {\color{black}z} \longmapsto \left(U\left(\frac{z_i-z}{h_n}\right)U\left(\frac{z_i-z}{h_n}\right)^{\!\top}\right)_{\color{black}s,s'}K\left(\frac{z_i-z}{h_n}\right).
\end{align}
%{\color{salmon}[plus loin on ne met plus $(s,s')$ mais directement $ss'$]}
%{\color{olive} QUESTION d'ILARIA : est-ce que définir cette fonction sur le compact $\Theta_r$ ne suffit pas à ne plus avoir de problème ?? : }
%
%\textcolor{black}{j'ai encore un problème sur le caractère lipschitz pour la partie polynomiale à cause du $1/h_n$.}
%\textcolor{black}{ATTENTION!!! Ecrire un petit lemme pour justifier}
%
By Lemma \ref{lem:lipschitz}, the functions $\phi^{(s,s')}_i$'s  are Lipschitz with Lipschitz constant equal to $\gamma_K h_n^{-1}$.
{\color{black} Hence} denoting 
\begin{equation}\label{eq: def fiss}
F^{(s,s')}_i = \phi^{(s,s')}_i - \mathbb{E}(\phi^{(s,s')}_i)
\end{equation}
we get {\color{black} for $z,y \in \Theta_r$ that}
\begin{equation}\label{eq: Fiss lipschitz}
\left| \sum_{i=1}^{n} \left( F_i^{(s,s')}(z) - F_i^{(s,s')}(y) \right) \right|
\leq {\color{black}2 \gamma_K} nh_n^{-1} \lVert z - y \rVert .
\end{equation}
\textcolor{black}{On the other hand, we have 
%\color{black} according to the definition of $B_{n,\lambda_n}$ in Eq. \eqref{def Bnlambda} and the definition of $\phi^{(s,s')}_i $, we can write,
for all $z\in \Theta_r$,}
\[
B_{n,\lambda_n}(z) = n^{-1}h_n^{-r}\sum_{i=1}^n \sum_{s,s'=1}^S \phi^{(s,s')}_i(z) +\lambda_n I_S.
\]
\textcolor{black}{Using the Jensen} inequality we have
\begin{align*}
\mathbb{E}\!\left[
\sup_{z \in \Theta_r} \| B_{n,\lambda_n}(z)-\mathbb{E}(B_{n,\lambda_n}(z)) \|_{\mathrm{op}}^{4}
\right]
& \leq \mathbb{E}\!\left[
\sup_{z \in \Theta_r} \left(\sum_{s,s'=1}^S \left| n^{-1}h_n^{-r}\textcolor{black}{\sum_{i=1}^n}F_i^{(s,s')}(z) \right|   \right)^{4}
\right]
\\
& \leq S^6 \mathbb{E}\!\left[
\sup_{z \in \Theta_r} \sum_{s,s'=1}^S \left| n^{-1}h_n^{-r}\textcolor{black}{\sum_{i=1}^n}F_i^{(s,s')}(z) \right|^{4}
\right]
\\
&\leq S^6 \sum_{s,s'=1}^{S}
\mathbb{E}\!\left[
\sup_{z \in \Theta_r}
\left|
n^{-1} h_n^{-r}
\sum_{i=1}^{n} F_i^{(s,s')}(z)
\right|^{4}
\right].
\end{align*}

\textcolor{black}{Let 
\begin{equation}\label{epsilon_n}
\varepsilon_n
= \sqrt{\frac{1}{2n h_n^{-r-2}}
\mathcal{W}\!\left(8 n h_n^{-r-2} \vert \Theta_r \vert^{2}\right)}
\end{equation}
where we recall that $\vert \Theta_r \vert =\sup_{u,v\in \Theta_r} \|u-v\|$, 
and consider $\mathcal{N}$
the \textcolor{black}{minimal $\varepsilon_n$-net of $\Theta_r$} with respect to the Euclidean norm.}
%the corresponding $\varepsilon$-net $\mathcal{N}$ of $\Theta_r$ satisfying Lemma \ref{eq: cardinal}} \sout{with respect to the Euclidean norm.}
%{\color{magenta} Let } $\varepsilon_n \in \left]0, \vert \Theta_r \vert\right]$, {\color{magenta} where $\vert \Theta_r \vert =\sup_{u,v\in \Theta_r} \|u-v\|$ and} let $\mathcal{N}$ be the minimal $\varepsilon$-net of $\Theta_r$
%with respect to the Euclidean norm.
\textcolor{black}{Then, we can write}
\begin{align*}
\sup_{z \in \Theta_r}
\left|
n^{-1} h_n^{-r}
\sum_{i=1}^{n} F_i^{(s,s')}(z)
\right|^{4} 
&\leq \Bigg( \sup_{y \in \mathcal{N}}
\left|
n^{-1} h_n^{-r}
\sum_{i=1}^{n} F_i^{(s,s')}(y)
\right| \\
& \quad \quad + \sup_{\Vert z-y \Vert \leq \varepsilon_n}
\left|
n^{-1} h_n^{-r}
\sum_{i=1}^{n} \left(F_i^{(s,s')}(z)-F_i^{(s,s')}(y)\right)
\right| \Bigg)^{4},
\end{align*}
which, using {\color{black} the} Jensen inequality, leads to 
\begin{align*}
\mathbb{E}\!\left(
\sup_{z \in \Theta_r} \| B_{n,\lambda_n}(z)-\mathbb{E}(B_{n,\lambda_n}(z)) \|_{\mathrm{op}}^{4}
\right)
&\leq 8S^6 \sum_{s,s'=1}^{S}
\mathbb{E}\left(\sup_{z \in \mathcal{N}}
\left|
n^{-1} h_n^{-r}
\sum_{i=1}^{n} F_i^{(s,s')}(z)
\right|^4\right)\\
& + 8S^6\sum_{s,s'=1}^{S}
\mathbb{E}\left(\sup_{\Vert z-y \Vert \leq \varepsilon_n}
\left|
n^{-1} h_n^{-r}
\sum_{i=1}^{n} (F_i^{(s,s')}(z)-F_i^{(s,s')}(y))
\right|^{4}\right).
\end{align*}
By \eqref{eq: Fiss lipschitz}, we obtain 
%\begin{align*}
\begin{equation}\label{star}
\mathbb{E} \left(
\sup_{z \in \Theta_r} \| B_{n,\lambda_n}(z)-\mathbb{E}(B_{n,\lambda_n}(z)) \|_{\mathrm{op}}^{4}
\right)
%&
\leq 8S^6 \left[\sum_{s,s'=1}^{S}
\mathbb{E}\!\left(\sup_{z \in \mathcal{N}}
\left|
n^{-1} h_n^{-r}
\sum_{i=1}^{n} F_i^{(s,s')}(z)
\right|^4\right)
%\& 
+ %8S^6
{\color{black} 16\gamma_K^4} h_n^{-4r-4}\varepsilon_n^4\right].
%\end{align*}
\end{equation}
Note that {\color{black} we can rewrite the first term on the right-hand side of \eqref{star} as}
\begin{align*}
\mathbb{E}\!\left(\sup_{z \in \mathcal{N}}
\left|n^{-1} h_n^{-r}
\sum_{i=1}^{n} F_i^{(s,s')}(z)
\right|^4\right) 
& = 4\int_0^{+\infty} t^3 \mathbb{P}\left(\sup_{z \in \mathcal{N}}\left|
n^{-1} h_n^{-r}
\sum_{i=1}^{n} F_i^{(s,s')}(z)
\right| \geq t\right)\mbox{d}t.
%& {\color{teal} \leq 4\left( \int_0^{a} + \int_a^{+\infty}t^3 \mathbb{P}\left(\sup_{z \in \mathcal{N}}\left|n^{-1} h_n^{-r}\sum_{i=1}^{n} F_i^{(s,s')}(z)\right| \geq t\right)\mbox{d}t\right)}
\end{align*}
\textcolor{black}{To apply Lemma \ref{lemma 26}, we shall prove that $\varepsilon_n$ defined in \eqref{epsilon_n} is such that $\varepsilon_n \leq \vert \Theta_r \vert$ and 
\begin{equation}\label{claim}
n^{-1}h_n^{-r} \log \left(2\frac{\vert \Theta_r\vert}{\varepsilon_n}\right) \leq 20 \vert \Theta_r\vert^2.
\end{equation}}
\textcolor{black}{Using (\ref{prop log}) with $\kappa=2 \vert \Theta_r \vert^2$ and (\ref{eq:majoration_W_log}) of Lemma \ref{lemma: proporties lambert function}, we get
\[
\varepsilon_n \leq \sqrt{ \vert \Theta_r \vert^2 \ \frac{\mathcal{W}\!\left(4n h_n^{-r-2}\right)}{n h_n^{-r-2}}} \leq \sqrt{ \vert \Theta_r \vert^2 \ \frac{\log\!\left(4n h_n^{-r-2}\right)}{n h_n^{-r-2}}}.
%\leq  \vert \Theta_r \vert
\]
Moreover $n h_n^{-r-2}\geq n\geq 3$, since $h_n\leq 1$, and $\log(4x)/x \leq 1,$ for all $x\geq 3$, which gives $\varepsilon_n\leq   \vert \Theta_r \vert.$}
%\textcolor{black}{On ne peut pas s'en sortir avec 2$\vert \Theta_r \vert$ au lieu de $\vert \Theta_r \vert$ ?
%  By (\ref{eq:croissance W}),
 % $$
 % \varepsilon_n \leq \vert \Theta_r \vert \sqrt\frac{4n h_n^{-r-2}}{n h_n^{-r-2}}}=2\vert \Theta_r \vert.
%$$ } 
To prove \eqref{claim}, \textcolor{black}{observe} that,
by definition of $\varepsilon_n$ given by (\ref{epsilon_n}), 
\[
2\varepsilon_n^2 nh_n^{-r-2} = \mathcal{W}\!\left(8 n h_n^{-r-2} \vert \Theta_r \vert^{2}\right).
\] 
Moreover, by definition of the Lambert function $\mathcal W$, 
\[
\varepsilon_n^2\exp\left( 2\varepsilon_n^2 nh_n^{-r-2} \right) = 4\vert \Theta_r \vert^{2},
\]
which \textcolor{black}{can be rewritten as
$$
\varepsilon_n\exp\left(\varepsilon_n^2 nh_n^{-r-2} \right) = 2\vert \Theta_r \vert,
$$
which leads to}
\begin{equation}\label{eps_n ineq}
\varepsilon_n^2 nh_n^{-r-2} = \log\left(2\frac{\vert \Theta_r\vert}{\varepsilon_n}\right).
\end{equation}
Thus, by (\ref{epsilon_n}), we have
\begin{align}\label{ineq_suppl}
n^{-1}h_n^{-r} \log \left(2\frac{\vert \Theta_r\vert}{\varepsilon_n}\right)  = \varepsilon_n^2h_n^{-2r-2} & = \frac{1}{2} \mathcal{W}\!\left(8 n h_n^{-r-2} \vert \Theta_r \vert^{2}\right)n^{-1}h^{-r}\nonumber\\
& \leq 4  \vert \Theta_r \vert^{2} \mathcal{W}\!\left( n h_n^{-r-2}\right)n^{-1}h^{-r}
\end{align}
where the last inequality follows from \eqref{prop log} of Lemma \ref{lemma: proporties lambert function} with $\kappa=8 \vert \Theta_r \vert^2$. Observe that, since by assumption,
\begin{equation}\label{eq:min_nh_nr}
  nh_n^r \geq \mathcal W(n)\geq 1,
\end{equation}
\textcolor{black}{where the last inequality comes from (\ref{eq:minoration_W}), we get}
% and $h_n \le 1$ {\color{orange} [on a besoin de $h<1$?]} 
\[ n h_n^{-r-2}
\le n^{2+\frac{2}{r}}.
\]
{\color{black} Using that $\mathcal{W}$ is non decreasing, we get that
  \begin{equation}\label{eq:W_croissante}
    \mathcal{W}(n h_n^{-r-2})\leq\mathcal{W}(n^{2+\frac{2}{r}})
  \end{equation}
  and hence, by  \eqref{eq:on W} of Lemma \ref{lemma: proporties lambert function}, \eqref{ineq_suppl} becomes
\begin{equation}\label{20theta}
  n^{-1}h_n^{-r} \log \left(2\frac{\vert \Theta_r\vert}{\varepsilon_n}\right)\leq 20  \vert \Theta_r \vert^{2}n^{-1}h_n^{-r} \mathcal{W}(n)\leq 20  \vert \Theta_r \vert^{2},
\end{equation}
where the last inequality comes from (\ref{eq:min_nh_nr}).
%  \eqref{eq:on W} of Lemma \ref{lemma: proporties lambert function} 
%we conclude that 
%\begin{equation}\label{20theta}
%\varepsilon_n^2 nh_n^{-r-2}\leq 20  \vert \Theta_r \vert^{2}n^{-1}h_n^{-r} \mathcal{W}(n) \leq 20  \vert \Theta_r \vert^{2}
%\end{equation}
}
{\color{black} Since all the assumptions of  Lemma \ref{lemma 26} are satisfied, we obtain that
\begin{align*}
\mathbb{E}\!\left[\sup_{z \in \mathcal{N}}
\left|
n^{-1} h_n^{-r}
\sum_{i=1}^{n} F_i^{(s,s')}(z)
\right|^4\right]
& = 4\int_0^{+\infty} t^3 \mathbb{P}\left(\sup_{z \in \mathcal{N}}\left|
n^{-1} h_n^{-r}
\sum_{i=1}^{n} F_i^{(s,s')}(z)
\right| \geq t\right)\mbox{d}t\\
& \leq  An^{-2}h_n^{-2r}\log\left(\frac{2\vert \Theta_r \vert}{\varepsilon_n}\right)^2,
\end{align*}
\textcolor{black}{where $A$ is a positive constant.} Thus, \eqref{star} becomes 
}
\begin{equation}\label{eq: condition chiante}
\mathbb{E}\!\left[
\sup_{z \in \Theta_r} \| B_{n,\lambda_n}(z)-\mathbb{E}(B_{n,\lambda_n}(z)) \|_{\mathrm{op}}^{4}
\right] \leq A\left (n^{-2}h_n^{-2r}\log\left(\frac{2\vert \Theta_r \vert}{\varepsilon_n}\right)^2 + h_n^{-4r-4}\varepsilon_n^4\right)
\end{equation}
{\color{black} for a suitable positive constant $A$.
  By the first two equalities of \eqref{ineq_suppl} %\eqref{eps_n ineq} and (\ref{epsilon_n}),
} %{\color{salmon}[ce n'est pas plutot le calcul fait entre \eqref{ineq_suppl} et \eqref{eps_n ineq} ? ]}
  we get %\eqref{20theta} and the definition of $\varepsilon_n$, 
\begin{align*}
\mathbb{E}\!\left[
\sup_{z \in \Theta_r} \| B_{n,\lambda_n}(z)-\mathbb{E}(B_{n,\lambda_n}(z)) \|_{\mathrm{op}}^{4}
\right] \leq 2 A h_n^{-4r-4}\varepsilon_n^4=\frac{2A}{4} n^{-2} h_n^{-2r}\mathcal{W}(8nh_n^{-r-2}\vert \Theta_r \vert^2)^2.% \\ 
%& \leq 800A n^{-4}h_n^{-4r} \vert \Theta_r \vert^{4} \mathcal W(n)^2\\
%&{\color{gray} \leq 800 A n^{-2}h_n^{-2r} \vert \Theta_r \vert^{4} \mathcal W(n)^2}
\end{align*}
By (\ref{prop log}) of Lemma \ref{lemma: proporties lambert function}, we have
\begin{align*}
\mathbb{E}\!\left[
\sup_{z \in \Theta_r} \| B_{n,\lambda_n}(z)-\mathbb{E}(B_{n,\lambda_n}(z)) \|_{\mathrm{op}}^{4}
  \right] &\leq \frac{8^2A}{2}\vert \Theta_r \vert^4 n^{-2} h_n^{-2r}\mathcal{W}(nh_n^{-r-2})^2\\
  &\leq 32 A\vert \Theta_r \vert^4 n^{-2} h_n^{-2r}\mathcal{W}(n^{2+\frac{2}{r}})^2\leq 800A\vert \Theta_r \vert^4 n^{-2} h_n^{-2r}\mathcal{W}(n)^2,
\end{align*}
where the second inequality comes from (\ref{eq:W_croissante}) and the last one from (\ref{eq:on W}), which gives \eqref{eq:lemma20.1}.
% {\color{gray} since by assumption $nh_n^r\geq \mathcal W(n)$ and  $W(n)\geq 1$ for $n\geq 3$ by \eqref{eq:minoration_W} of Lemma \ref{lemma: proporties lambert function}}
% {\color{orange} [La 2eme inegalite n'est pas mieux que la 3eme?] }

{\color{black}
Let us now prove \eqref{eq:lemma20.2}. By the Jensen inequality, we have 
\begin{align*}
\mathbb{E}\left[\sup_{z \in \Theta_r}\Vert B_{n,\lambda_n}(z)^{-1} \Vert_{\text{op}}^4 \right]
&\leq \mathbb{E}\left[\sup_{z \in \Theta_r}\left(\Vert B_{n,\lambda_n}(z)^{-1} - (\mathbb{E}B_{n,\lambda_n}(z))^{-1}\Vert_{\text{op}} + \Vert (\mathbb{E}B_{n,\lambda_n}(z))^{-1}\Vert_{\text{op}}\right)^4 \right] \\
&\leq \mathbb{E}\left[\sup_{z \in \Theta_r}\left(8\Vert B_{n,\lambda_n}(z)^{-1} - (\mathbb{E}B_{n,\lambda_n}(z))^{-1}\Vert_{\text{op}}^4 + 8\Vert (\mathbb{E}B_{n,\lambda_n}(z))^{-1}\Vert_{\text{op}}^4\right) \right] \\
& \leq 8\mathbb{E}\left[\sup_{z \in \Theta_r}\Vert B_{n,\lambda_n}(z)^{-1} - (\mathbb{E}B_{n,\lambda_n}(z))^{-1}\Vert_{\text{op}}^4\right] + 8\sup_{z \in \Theta_r}\Vert (\mathbb{E}B_{n,\lambda_n}(z))^{-1}\Vert_{\text{op}}^4.
%& \leq 8\mathbb{E}\left[\sup_{z \in \Theta_r}\Vert B_{n,\lambda_n}(z)^{-1} - (\mathbb{E}B_{n,\lambda_n}(z))^{-1}\Vert^4\right] + 8\lambda_{\text{min}}^{-4},
\end{align*}
Observe that
\begin{multline}\label{eq:inegalité decomposition B}
\Vert B_{n,\lambda_n}(z)^{-1} - (\mathbb{E}(B_{n,\lambda_n}(z)))^{-1}\Vert_{\text{op}}\\ \leq \Vert(\mathbb{E}(B_{n,\lambda_n}(z)))^{-1}\Vert_{\text{op}}\Vert B_{n,\lambda_n}(z)^{-1}\Vert_{\text{op}} \Vert B_{n,\lambda_n}(z) - {\mathbb{E}(B_{n,\lambda_n}(z)))}\Vert_{\text{op}}
\end{multline}
and that for any  $z \in \Theta_r$
\begin{align*}
\Vert B_{n,\lambda_n}(z)^{-1}\Vert_{\text{op}} %= \max( Sp(B_{n,\lambda_n}(z)^{-1}) 
= \frac{1}{\mu_{min}(B_{n,\lambda_n}(z)) }
 = \frac{1}{\mu_{min}(B_n(z))+\lambda_n}
 \leq \frac{1}{\lambda_n},
\end{align*}
where $\mu_{min}(A)$ denotes the smallest eigenvalue of the matrix $A$. 
Moreover, by Lemma \ref{lemma 17}
\[
\sup_{z \in \Theta_r} \Vert (\mathbb{E}(B_{n}(z)))^{-1}\Vert_{\text{op}} =\sup_{z \in \Theta_r}\frac{1}{\mu_{min}(\mathbb{E}(B_{n}(z))) }\leq\lambda_{\text{min}}^{-1}.
\]
Thus, 
\[
\sup_{z \in \Theta_r} \Vert (\mathbb{E}(B_{n,\lambda_n}(z)))^{-1}\Vert_{\text{op}} =\sup_{z \in \Theta_r}\frac{1}{\mu_{min}(\mathbb{E}(B_{n}(z)))+\lambda_n } \leq \lambda_{\text{min}}^{-1}.
\]
Hence, by (\ref{eq:inegalité decomposition B}) {\color{black} and \eqref{eq:lemma20.1}}% and {\color{salmon} since by assumption  $nh_n^r\geq \lambda_n^{-2}\mathcal{W}(n),$}
\begin{align}\label{eq:maj1}
\mathbb{E}\left(\sup_{\color{black}z \in \Theta_r} \Vert B_{n,\lambda_n}(z)^{-1} - (\mathbb{E}B_{n,\lambda_n}(z))^{-1}\Vert_{\text{op}}^2\right) 
%&\leq \lambda_{min}^{-2}\mathbb{E}\left(\sup_z \left(\Vert B_{n,\lambda_n}(z)^{-1}\Vert_{op} \Vert B_{n,\lambda_n}(z) - (\mathbb{E}B_{n,\lambda_n}(z))\Vert_{op}\right)^2\right)\\
%&\leq \lambda_{min}^{-2}\mathbb{E}\left(\sup_z \Vert B_{n,\lambda_n}(z)^{-1}\Vert_{op}^4\right)^{\frac{1}{2}}\mathbb{E}\left( \sup\Vert B_{n,\lambda_n}(z) - (\mathbb{E}B_{n,\lambda_n}(z))\Vert_{op}^4\right)^{\frac{1}{2}}.
\leq \lambda_{min}^{-2} \lambda_n^{-2} h_n^{-{\color{black}r}} n^{-1}\mathcal{W}(n),% {\color{salmon} \leq \lambda_{min}^{-2}}
\end{align}
Using the assumption $nh_n^r \geq \lambda_n^{-2}\mathcal{W}(n)$, we thus get
\begin{align*}
\mathbb{E}\left[\sup_{z \in \Theta_r}\Vert B_{n,\lambda_n}(z)^{-1} \Vert_{\text{op}}^4 \right]
 %\leq 8\lambda_{min}^{-4}\left(\lambda_n^{-4}\mathbb{E}\left[\sup_{z \in \Theta_r}\Vert B_{n,\lambda_n}(z) - (\mathbb{E}B_{n,\lambda_n}(z))\Vert^4\right]+ 1\right)
 \leq 16 \lambda_{min}^{-4},
%& \leq A\lambda_{min}^{-4}\left(\lambda_n^{-4} h_n^{-2{\color{magenta}r}} n^{-2}
%\mathcal{W}(n)^2+ 1\right)\\
%&\leq {\color{salmon} 2A} \lambda_{min}^{-4}.
\end{align*}
which concludes the proof of \eqref{eq:lemma20.2}. 

Using \eqref{eq:inegalité decomposition B}, \eqref{eq:lemma20.1} and \eqref{eq:lemma20.2} we obtain \eqref{eq:lemma20.3}, which concludes the proof of the lemma.

}

\end{proof}

\begin{lemma}\label{lemma 27}
{\color{black} Let $\Theta_r$ be defined in \eqref{def: real theta_r}.}
  Assume that \eqref{ass: assumption K}, {\color{black} \eqref{ass: noise} and \eqref{ass: density}} hold.
  %{\color{olive} and} {\color{olive} according to the notation of Lemma \ref{lemma 20},} denote, {\color{olive} for $z\in \Theta_r$}
%\begin{align*}
%F^{(s,s')}_i {\color{olive}(z)} &= \left(U\left(\frac{z_i-z}{h_n}\right)  U\left(\frac{z_i-z}{h_n} \right)^{\!\top}\right)_{(s,s')}K\left(\frac{z_i-z}{h_n} \right) \\ & \qquad \qquad - \mathbb{E}\Bigg[\left(U\left(\frac{z_i-z}{h_n}\right)U\left(\frac{z_i-z}{h_n}\right)^{\!\top}\right)_{(s,s')}K\left(\frac{z_i-z}{h_n}\right)\Bigg].
%\end{align*}
Then, there exists a positive constant $C$ such that for all positive $t$ and $z\in \Theta_r$
\begin{equation}\label{eq-lemma27}
\mathbb{P}\left(\left|\frac{1}{n}h_n^{-r}\sum_{i=1}^n F_i^{(s,s')}{\color{black}(z)} \right| > t \right) \leq 2 \exp\left(-\frac{t^2nh_n^{r}}{{\color{black}C}(1+t)}\right),
\end{equation}
\begin{equation}\label{eq-lemma27-1}
\mathbb{P}\left(\left|\frac{1}{n}h_n^{-r}\sum_{i=1}^n H_i^{(s)}{\color{black}(z)} \right| > t \right) \leq 2 \exp\left(-\frac{t^2nh_n^{r}}{C(1+t)}\right),
\end{equation}
\begin{equation}\label{eq-lemma27-2}
\mathbb{P}\left(\left|\frac{1}{n}h_n^{-r}\sum_{i=1}^n F_i^{(s)}(z) \right| > t \right) \leq 2 \exp\left(-\frac{t^2nh_n^{r}}{C(1+t)}\right),
\end{equation}
where $F_i^{(s,s')}$ is defined in (\ref{eq: def fiss}), %$H_i^{(s)}$ in \eqref{def his} and $F_i^{(s)}$ in \eqref{eq:def fis}.
\begin{equation}\label{def his}
H_i^{(s)}(z)
=
U\!\left(\frac{z_i - z}{h_n}\right)
_{s}
K\!\left(\frac{z_i - z}{h_n}\right)g(z_i)
-
\mathbb{E}
\left[
U\!\left(\frac{z_i - z}{h_n}\right)
_{s}
K\!\left(\frac{z_i - z}{h_n}\right)g(z_i)
\right],
\end{equation}
and 
\begin{equation}\label{eq:def fis}
F_i^{(s)}(z) = U\left(\frac{z_i-z}{h_n}\right)_{s}K\left(\frac{z_i-z}{h_n}\right)\xi_i
\end{equation}
\end{lemma}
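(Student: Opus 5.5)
All three inequalities are Bernstein-type tail bounds for sums of i.i.d. centered variables, and I would prove them by applying Bernstein's inequality (in its moment form, to cover the noise term) to $X_i = h_n^{-r}F_i^{(s,s')}(z)$, $h_n^{-r}H_i^{(s)}(z)$ and $h_n^{-r}F_i^{(s)}(z)$ in turn. The form $\exp(-t^2 nh_n^r/(C(1+t)))$ is exactly what Bernstein gives when the variance proxy is of order $h_n^{-r}$ and the scale proxy is of order $h_n^{-r}$. Indeed, Bernstein yields $2\exp(-n t^2/(2(v+bt)))$ with $v\lesssim h_n^{-r}$ and $b\lesssim h_n^{-r}$, which equals the claimed bound with $C$ independent of $z$, $n$, $h_n$. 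The $z_i$ are i.i.d. by Lemma \ref{lemma: formule densité}, and the $\xi_i$ are independent of them by (\ref{ass: noise}), so the summands are independent in each case.

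\textbf{Step 1: bounded summands.} For $F_i^{(s,s')}$ and $H_i^{(s)}$ I use (\ref{ass: assumption K}): $K$ is bounded and supported in the unit ball. On that support $\|(z_i-z)/h_n\|\le 1$, so every monomial entry of $U$ is at most $1$. Hence $|\phi_i^{(s,s')}(z)|\le \sup K$, and $|F_i^{(s,s')}(z)|\le 2\sup K$. For $H_i^{(s)}$ I additionally need $g(z_i)$ to be bounded when $K\neq0$. In that case $z_i$ lies within distance $h_n\le 1$ of $z\in W\Theta$, so $z_i = Wx$ for some $x\in\Theta'$, and (\ref{ass: bounded}) of (\ref{ass: regularity f}) gives $|g(z_i)|\le \sup_{\Theta'}|f|$. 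This yields $|X_i|\le b:=c\,h_n^{-r}$.

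\textbf{Step 2: variance.} Next I bound $\mathbb{E}(X_i^2)\le h_n^{-2r}\,\mathbb{E}\big(U_s^2 K^2(\cdot)\cdots\big)$. Changing variables $u=(y-z)/h_n$ in the density of $z_i$, and using $\tilde p\le 1$ from Lemma \ref{lemma: formule densité}, gives $\mathbb{E}\big[K^2((z_i-z)/h_n)\big]\le h_n^r\int K^2 \le h_n^r\sup K$. Hence $v\le c\,h_n^{-r}$. This is the same computation as in Lemma 17 of \cite{tsybakov}.

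\textbf{Step 3: the noise term, which is the main obstacle.} For $F_i^{(s)}$ the summand is unbounded because of $\xi_i$, so plain Bernstein does not apply. I would instead use the moment version: if $\mathbb{E}|X_i|^k\le \frac{k!}{2}v b^{k-2}$ for all $k\ge2$, the same tail bound follows. The summand is centered because $\xi_i$ is centered and independent of $z_i$. By independence,
$$\mathbb{E}|X_i|^k\le h_n^{-rk}(\sup K)^{k-1}\,\mathbb{E}\big[K((z_i-z)/h_n)\big]\,\mathbb{E}|\xi_1|^k\le h_n^{-r(k-1)}(\sup K)^{k-1}\,\mathbb{E}|\xi_1|^k .$$
The assumption $\mathbb{E}\exp(|\xi_1|/\sigma)<\infty$ gives $\mathbb{E}|\xi_1|^k\le k!\,\sigma^k\,\mathbb{E}e^{|\xi_1|/\sigma}$. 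Therefore the moment condition holds with $v\asymp h_n^{-r}$ and $b\asymp \sigma h_n^{-r}$. Tracking the constants so that a single $C$ works for all $s,s'$ and all three statements is then routine: take $C$ to be the maximum of the constants obtained in the three cases.
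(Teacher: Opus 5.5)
Your proposal is correct and takes essentially the same route as the paper. In both, one checks the Bernstein moment condition with $\nu,H$ of order $h_n^{-r}$, using the boundedness and compact support of $K$, the change of variables $u=(y-z)/h_n$ with $\tilde p\le 1$ from Lemma \ref{lemma: formule densité}, and, for the noise term, independence of $\xi_i$ and $z_i$ together with $\mathbb{E}|\xi_i|^q\le q!\,\sigma^q\,\mathbb{E}e^{|\xi_i|/\sigma}$, before invoking Lemma \ref{lemma: Bernstein}. Your treatment of $H_i^{(s)}$ is more explicit than the paper's one-line ``$g$ can be bounded on a compact''. It does, however, rely on \eqref{ass: bounded} of \eqref{ass: regularity f} and on $h_n\le 1$, neither of which is among the lemma's stated hypotheses, although the paper tacitly needs the same kind of fact.
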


\begin{proof} 

{\color{black} Let $z\in \Theta_r$. To prove \eqref{eq-lemma27}, we show that the Bernstein condition \textcolor{black}{of Lemma \ref{lemma: Bernstein}:}
\[
\mathbb{E}\left[\left|h_n^{-r}F_i^{(s,s')}(z)\right|^q\right]  \leq \frac{q!}{2}\nu H^{q-2}
\]
is satisfied for any $q\in\mathbb N$, $q\geq2$ and for some $\nu,H>0$.
Let $X$ be a random variable. By the Jensen inequality, we have
\[\mathbb{E}\left[\left|X-\mathbb{E}(X)\right|^q\right] \leq 2^{q-1}(\mathbb{E}(|X|^q)+|\mathbb{E}(X)|^q)\leq 2^q\mathbb{E}(|X|^q).
\]
Thus,
\begin{align*}
\mathbb{E}\left[\left|h_n^{-r}F_i^{(s,s')}(z)\right|^q\right]  
& \leq h_n^{-rq}2^q\mathbb{E}\left[\left|\left(U\left(\frac{z_i-z}{h_n}\right)U\left(\frac{z_i-z}{h_n}\right)^{\!\top}\right)_{ss'}K\left(\frac{z_i-z}{h_n}\right)\right|^q\right]\\
& \leq h_n^{-r(q-1)}2^q\int \left \vert \left(U(u)U(u)^{\!\top}\right)_{ss'}K(u)\right \vert^q \tilde p(z+h_nu)\mathrm{d}u.%\\
%& \leq h_n^{-r(q-1)}A^q,
\end{align*}}
%{\color{magenta} Question: $z_i \in \Theta_r', z\in \Theta_r$ et $u\in$? --> $\tilde p(z+h_nu)$ est bien d\'ef?} 
{\color{black}
By Lemma  \ref{lemma: formule densité}, $\tilde p$ is bounded by $1$ %{\color{magenta}[$\tilde p_M$ vs 1]}
and by \eqref{ass: assumption K} the kernel $K$ has a compact support so that  $\left(U(u)U(u)^{\!\top}\right)_{ss'}K(u)$ can be uniformly bounded. Hence, }
\[
\mathbb{E}\left[\left|h_n^{-r}F_i^{(s,s')}(z)\right|^q\right]   \leq h_n^{-r(q-1)}A^q.
\]
{\color{black} Denoting} $\nu = {\color{black}A^2}h_n^{-r}$ and $H = {\color{black}A}h_n^{-r}$ {\color{black} we have}
\[
\mathbb{E}\left[\left|h_n^{-r}F_i^{(s,s')}(z)\right|^q\right] \leq \nu H^{q-2}
 \leq \frac{q!}{2}\nu H^{q-2}.
\]
{\color{black} The result follows from the Bernstein inequality} \textcolor{black}{given} in Lemma \ref{lemma: Bernstein}.
\textcolor{black}{For \eqref{eq-lemma27-1}, we use the same line of reasoning to get}
\begin{align*}
\mathbb{E}\left[\left|h_n^{-r}H_i^{(s)}(z)\right|^q\right]  
\leq h_n^{-r(q-1)}2^q\int \left \vert U(u)_{s}K(u)g(z+h_nu)\right \vert^q \tilde p(z+h_nu)\mathrm{d}u.
\end{align*}
\textcolor{black}{Since $K$ has a compact support and  $g$ can be bounded on a compact we conclude by using the same arguments as those used for proving \eqref{eq-lemma27}.}
To prove \eqref{eq-lemma27-2},
% we recall that $F_i^{(s)}$ is defined \textcolor{black}{by}
%\begin{equation*}
%F_i^{(s)}(z)
%=
%U\!\left(\frac{z_i - z}{h_n}\right)
%_{s}
%K\!\left(\frac{z_i - z}{h_n}\right)\xi_i
%\end{equation*}
%and that, by \eqref{ass: noise}, the $\xi_i$'s and the $z_i$'s are independent so we have :
\textcolor{black}{we use that}
$$\mathbb{E}\left[\left|h_n^{-r}\textcolor{black}{F_i^{(s)}(z)}\right|^q\right]  
= \mathbb{E}\left[\left|h_n^{-r}U\!\left(\frac{z_i - z}{h_n}\right)
_{s}
K\!\left(\frac{z_i - z}{h_n}\right)\right|^q\right]\mathbb{E}\left(\vert\xi_i \vert^q \right),$$
\textcolor{black}{since, by \eqref{ass: noise}, the $\xi_i$'s and the $z_i$'s are independent.}
Then,
% still using that $\tilde p$ is bounded by $1$ (Lemma \ref{lemma: formule densité}) and that the kernel $K$ has a compact support by \eqref{ass: assumption K}, we uniformly bound $U(u)_{s}K(u)$ and get
\textcolor{black}{using the same arguments as previously {\color{black} for a positive constant $A$ and $\sigma$ defined in  \eqref{ass: noise}}
\begin{align*}
\mathbb{E}\left[\left|h_n^{-r}\textcolor{black}{F_i^{(s)}(z)}\right|^q\right]  
%&= \mathbb{E}\left[\left|h_n^{-r}U\!\left(\frac{z_i - z}{h_n}\right)
%_{s}
%K\!\left(\frac{z_i - z}{h_n}\right)\right|^q\right]\mathbb{E}\left(\vert\xi_i \vert^q \right)\\ & \leq h_n^{-r(q-1)}\int \left \vert U(u)_{s}K(u)\right \vert^q \left \vert \xi_i \right \vert ^q \tilde p(z+h_nu)\mathrm{d}u\\
%& \leq A^q h_n^{-r(q-1)}\int \left \vert \xi_i \right \vert ^q \tilde p(z+h_nu)\mathrm{d}u\\
%& = A^q h_n^{-r(q-1)}\int \left \vert \xi_i \right \vert ^q \tilde p(u)\mathrm{d}u\\
%& {\color{green}= A h_n^{-r(q-1)}\int \left \vert \xi_i \right \vert ^q \int_{ker(W)}p(y+W^\top u)\mathrm{d}y \mathrm{d}u}\\
  & \leq A^q h_n^{-r(q-1)}\mathbb{E}(\vert\xi_i\vert^q)\leq A^q \sigma^q h_n^{-r(q-1)} q!\mathbb{E}\left(\frac{\vert\xi_i\vert^q}{\sigma^q q!}
    \right)\\
 &\leq A^q \sigma^q h_n^{-r(q-1)} q! \mathbb{E}\left(\exp(\vert \xi_i\vert/\sigma)\right)\leq A^q \sigma^q h_n^{-r(q-1)} q! C,
\end{align*}
{\color{black} where the last inequality comes from the sub-Gaussian assumption \eqref{ass: noise}.}}
\textcolor{black}{Denoting $\nu' = 2C(A\sigma)^2h_n^{-r}$ and $H' = A\sigma h_n^{-r}$ we get
\[
\mathbb{E}\left[\left|h_n^{-r}\textcolor{black}{F_i^{(s)}(z)}\right|^q\right] \leq \frac{q!}{2}\nu' H'^{q-2}
\]
and the result follows from the Bernstein inequality given in Lemma \ref{lemma: Bernstein}.}
\end{proof}

\begin{lemma}\label{lemma 26}
{\color{black} Let $\Theta_r$ be defined in \eqref{def: real theta_r} and}  \textcolor{black}{let $n\geq 3$.} Assume that \eqref{ass: assumption K}, {\color{black}\eqref{ass: noise} and \eqref{ass: density}} hold {\color{black} and} {\color{black} {suppose} that $nh_n^r\geq 1$. Let also $\varepsilon_n$ be such that $\varepsilon_n\in ]0, \vert \Theta_r \vert]$ and}  let $\mathcal N$ be the \textcolor{black}{minimal $\varepsilon_n$-net} of $\Theta_r$ with respect to the Euclidean norm and assume that there exists a positive constant $c$ such that
\textcolor{black}{  
\[ n^{-1}h_n^{-r}\log\left(2\frac{\vert \Theta_r \vert }{\varepsilon_n}\right)\leq c. 
\]
}
Then {\color{black} there exists a positive constant $A$ such that} %for all $z\in \Theta_r$} %{\color{salmon} \sout{defined in (\ref{def: real theta_r})}}}
\begin{equation}\label{eq-lemma26}
\int_0^{+\infty} t^3 \mathbb{P}\left(\sup_{z \in \mathcal{N}}\left|
n^{-1} h_n^{-r}
\sum_{i=1}^{n} F_i^{(s,s')}(z)
\right| \geq t\right)\mbox{d}t
%\left(2\frac{\vert \Theta_r \vert}{\varepsilon_n}\right)^r \int_a^{+\infty} t^3 \mathbb{P}\left(\left|\frac{1}{n} h_n^{-r}\sum_{i=1}^n F_i^{(s,s')}{\color{olive}(z)}\right| > t \right)\mathrm{d}t 
\leq An^{-2}h_n^{-2r}\log\left(2\frac{\vert \Theta_r \vert}{\varepsilon_n}\right)^2,
\end{equation}
\textcolor{black}{where $F_i^{(s,s')}$ is defined in (\ref{eq: def fiss}).} 
\end{lemma}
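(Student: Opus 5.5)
We bound the integral by combining a union bound over the net with the Bernstein-type tail bound \eqref{eq-lemma27} of Lemma \ref{lemma 27}, and then split the $t$-integral at a threshold of the order of the expected fluctuation. Write $Z(z)=n^{-1}h_n^{-r}\sum_{i=1}^n F_i^{(s,s')}(z)$ and $L_n=\log\left(2\vert\Theta_r\vert/\varepsilon_n\right)$, and note that $L_n\geq \log 2>0$ because $\varepsilon_n\le\vert\Theta_r\vert$.

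\textbf{Step 1: the cardinality of the net.} Since $\Theta_r$ lies in a ball of radius $\vert\Theta_r\vert$ in $\mathbb{R}^r$, a standard volumetric argument gives $\#\mathcal N\le (3\vert\Theta_r\vert/\varepsilon_n)^r\le (2\vert\Theta_r\vert/\varepsilon_n)^{c_r}$, that is, $\log\#\mathcal N\le c_r L_n$ for a constant $c_r$ depending only on $r$. Here we use that $2\vert\Theta_r\vert/\varepsilon_n\ge 2$, so the ratio $3/2$ can be absorbed into the exponent.

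\textbf{Step 2: union bound.} By the union bound and \eqref{eq-lemma27},
\[
\mathbb{P}\Big(\sup_{z\in\mathcal N}|Z(z)|\ge t\Big)\le \min\Big(1,\,2\exp\big(c_rL_n-\tfrac{t^2nh_n^r}{C(1+t)}\big)\Big).
\]
Choose the threshold $a=\sqrt{K_0 L_n/(nh_n^r)}$ with $K_0$ a large constant. The hypothesis $n^{-1}h_n^{-r}L_n\le c$ together with $nh_n^r\ge1$ gives $a^2\le K_0 c\, L_n^{-1}\cdot L_n^2\,(nh_n^r)^{-2}\cdot nh_n^r$; more usefully, it gives $a\le \sqrt{K_0 c}\,h_n^{-r}$. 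Because of this, $a$ is not guaranteed to be bounded, and we must handle the large-$t$ regime separately.

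\textbf{Step 3: the integral.} Split into $\int_0^a$ and $\int_a^\infty$. The first piece is bounded by $a^4/4=K_0^2L_n^2/(4n^2h_n^{2r})$, which is exactly the target order. On $[a,\infty)$ we distinguish $t\le1$ and $t\ge1$:
\begin{itemize}
\item For $t\le 1$, the exponent is at most $c_rL_n-t^2nh_n^r/(2C)$. With $K_0\ge 4Cc_r$ this is at most $-t^2nh_n^r/(4C)$, so the contribution is $\lesssim\int_0^\infty t^3 e^{-t^2nh_n^r/(4C)}dt\asymp (nh_n^r)^{-2}\le L_n^2(nh_n^r)^{-2}/(\log 2)^2$.
\item For $t\ge1$ the exponent is at most $c_rL_n-t\,nh_n^r/(2C)$. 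This is the delicate region, since the union factor $e^{c_rL_n}$ is no longer killed by $t^2$ growth. We require additionally $t\ge a' := 4Cc_rL_n/(nh_n^r)$, which by the hypothesis is bounded by the constant $4Cc_rc$. On $[\max(1,a'),\infty)$ the tail is $\le 2e^{-t\,nh_n^r/(4C)}$, and the integral $\int_1^\infty t^3e^{-tnh_n^r/(4C)}dt$ is exponentially small in $nh_n^r$. It is therefore $\lesssim (nh_n^r)^{-2}$, using $nh_n^r\ge1$ and $x^ke^{-x}$ bounded.
\item On the remaining interval $[1,a']$ (if nonempty), bound the probability by $1$. This gives a contribution of at most $a'^4\lesssim L_n^4(nh_n^r)^{-4}\le c^2L_n^2(nh_n^r)^{-2}$, where the last step uses the hypothesis once more.
\end{itemize}
Summing the pieces yields \eqref{eq-lemma26}.

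The main obstacle is this large-$t$ (subexponential) part of Bernstein's inequality. The hypothesis $n^{-1}h_n^{-r}L_n\le c$ is used precisely there, to ensure that the crossover point where the linear tail beats the union bound is bounded, and that the region below it contributes only $O(L_n^2(nh_n^r)^{-2})$.
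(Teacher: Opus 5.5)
Your argument is correct and follows essentially the paper's route. You take a union bound over the net with $\log\#\mathcal N\lesssim \log(2\vert\Theta_r\vert/\varepsilon_n)$, apply the Bernstein tail of Lemma \ref{lemma 27}, cut at a threshold of order $\sqrt{n^{-1}h_n^{-r}\log(2\vert\Theta_r\vert/\varepsilon_n)}$, and then treat a Gaussian and a linear regime; the only difference is that the paper cuts at a constant $b$ and enlarges the threshold constant $\gamma$ in terms of $c$, whereas you cut at $t=1$ and bound the window $[1,a']$ trivially. One slip, harmless because nothing later uses it: the hypothesis gives directly $a^2=K_0\,n^{-1}h_n^{-r}\log(2\vert\Theta_r\vert/\varepsilon_n)\le K_0c$, so $a$ is bounded by a constant, contrary to your remark in Step 2 (this is exactly how the paper uses the hypothesis).
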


\begin{proof} 

\textcolor{black}{Let $a_n$ be  a positive number. It holds}
\begin{multline*}
\int_0^{+\infty} t^3 \mathbb{P}\left(\sup_{z \in \mathcal{N}}\left|
n^{-1} h_n^{-r}
\sum_{i=1}^{n} F_i^{(s,s')}(z)
\right| \geq t\right)\mbox{d}t \\\leq \textcolor{black}{\frac{a_n^4}{4} + \int_{a_n}^{+\infty} t^3 \mathbb{P}\left(\sup_{z \in \mathcal{N}}\left|
n^{-1} h_n^{-r}
\sum_{i=1}^{n} F_i^{(s,s')}(z)
\right| \geq t\right)\mbox{d}t}.
\end{multline*}
Observe that
\begin{align*}
\mathbb{P}\left(\sup_{z \in \mathcal{N}}\left|
n^{-1} h_n^{-r}
\sum_{i=1}^{n} F_i^{(s,s')}(z)
\right| \geq t\right)
& = \mathbb{P}\left(\bigcup_{z \in \mathcal{N}} \left(\left|
n^{-1} h_n^{-r}
\sum_{i=1}^{n} F_i^{(s,s')}(z)
\right| \geq t\right)\right)\\
& \leq \sum_{z\in \mathcal{N}} \mathbb{P} \left(\left|
n^{-1} h_n^{-r}
\sum_{i=1}^{n} F_i^{(s,s')}(z)
\right| \geq t\right)\\
& \leq \#(\mathcal{N}) \sup_{z \in \mathcal{N}} \mathbb{P}\left(\left|
n^{-1} h_n^{-r}
\sum_{i=1}^{n} F_i^{(s,s')}(z)
\right| \geq t\right)\\
& \leq \left(\frac{\vert \Theta_r \vert}{\varepsilon_n}+1\right)^r \sup_{z \in \mathcal{N}} \mathbb{P}\left(\left|
n^{-1} h_n^{-r}
\sum_{i=1}^{n} F_i^{(s,s')}(z)
\right| \geq t\right)
\end{align*}
where the last inequality comes from \textcolor{black}{the standard bound: $\#(\mathcal{N})\leq\left(\frac{\vert \Theta_r \vert}{\varepsilon_n}+1\right)^r$ given in Corollary 4.2.11 of \cite{Vershynin_2018}} %{\color{salmon} [ajouter une ref?]}
% Lemma \ref{eq: cardinal}. {\color{orange} [NB \`a modifier si on vire le lemme sur $\#(\mathcal N)$]}
 Since, by assumption, $\varepsilon_n \leq \vert \Theta_r \vert$, \textcolor{black}{we have}
\begin{multline*}
%\mathbb{E}\!\left[\sup_{z \in \mathcal{N}}
%\left|
%n^{-1} h_n^{-r}
%\sum_{i=1}^{n} F_i^{(s,s')}(z)
%\right|^4\right]
\int_{\textcolor{black}{a_n}}^{+\infty}t^3 \mathbb{P}\left(\sup_{z \in \mathcal{N}}\left|n^{-1} h_n^{-r}\sum_{i=1}^{n} F_i^{(s,s')}(z)\right| \geq t\right)\mbox{d}t\\
%\mathbb{E}\!\left[\sup_{z \in \mathcal{N}}
%\left|
%n^{-1} h_n^{-r}
%\sum_{i=1}^{n} F_i^{(s,s')}(z)
%\right|^4\right] &
% \leq {\color{magenta}4}\left(2\frac{\vert \Theta_r \vert}{\varepsilon_n}\right)^r \int_0^{+\infty} t^3 \sup_{z \in \mathcal{N}}\mathbb{P}\left(\left|
%n^{-1} h_n^{-r}
%\sum_{i=1}^{n} F_i^{(s,s')}(z)
%\right| \geq t\right)\mbox{d}t 
\leq \left(2\frac{\vert \Theta_r \vert}{\varepsilon_n}\right)^r \int_{\textcolor{black}{a_n}}^{+\infty} t^3 \sup_{z \in \mathcal{N}}\mathbb{P}\left(\left|
n^{-1} h_n^{-r}
\sum_{i=1}^{n} F_i^{(s,s')}(z)
\right| \geq t\right)\mbox{d}t.
\end{multline*}
\textcolor{black}{By Lemma \ref{lemma 27},}
\begin{multline}\label{integ_tsy}
\left(2\frac{\vert \Theta_r \vert}{\varepsilon_n}\right)^r \int_{\textcolor{black}{a_n}}^{+\infty} t^3\textcolor{black}{\sup_{z \in \mathcal{N}}} \mathbb{P}\left(\left|\frac{1}{n} h_n^{-r}\sum_{i=1}^n F_i^{(s,s')}(z)\right| > t \right)\mathrm{d}t \\\leq 2 \int_{\textcolor{black}{a_n}}^{+\infty} t^3\exp \left( -\frac{t^2nh_n^r}{A_3(1+t)}+A_3\log\left(2\frac{\vert \Theta_r \vert}{\varepsilon_n}\right)\right)dt,
\end{multline}
\textcolor{black}{where $A_3 = \text{max}(C,r)$, $C$ being the constant appearing in \eqref{eq-lemma27}}.
%{\color{olive} We consider the interval $[a,b]$
To compute the integral, we define 
  \begin{align*}
a_n = A_3 \gamma u_n  \quad \textrm{with} \quad u_n = \left( n^{-1}h_n^{-r}\log\left(2\frac{\vert \Theta_r \vert }{\varepsilon_n}\right) \right)^{1/2} \quad \text{and} \quad 
b = \frac{\gamma^2}{A_3+1}-1,
\end{align*}
where $\gamma = 2 (A_3+1)\max\{A_3\sqrt c,1\}.$
\textcolor{black}{Since we will rewrite the integral as $\int_{a_n}^{+\infty}=\int_{a_n}^{b}+\int_{b}^{+\infty}$}, \textcolor{black}{we prove in the next lines  that $a_n \leq b$ for all $n \geq 3$. Note that proving $a_n \leq b$ is equivalent to proving $\gamma^2 - u_n A_3 (A_3+1) \gamma - (A_3+1) \textcolor{black}{\geq}0.$}
% \begin{align*}
%   a_n \leq b \iff \gamma^2 - \textcolor{black}{u_n} A_3 (A_3+1) \gamma - (A_3+1) \textcolor{black}{\geq}0.
% \end{align*}
\textcolor{black}{Since $u_n\leq\sqrt{c}$, it is thus enough to prove that
\begin{equation}\label{eq:gamma}
 \gamma^2 - \sqrt{c} A_3 (A_3+1) \gamma - (A_3+1) \geq 0,
\end{equation}
which holds true if
$$
\gamma \geq \frac{ \sqrt{c} A_3 (A_3+1) + \sqrt{c  A_3^2 (A_3+1)^2 + 4 (A_3+1) } }{2}.
$$
Hence, \eqref{eq:gamma} is satisfied if
\begin{multline*}
  \gamma \geq \frac{\max\{A_3\sqrt c,1\}(A_3+1) +\sqrt{\max\{A_3\sqrt c,1\}^2(A_3+1)^2 + 4\max\{A_3\sqrt c;1\}^2(A_3+1)^2}}{2}\\=\max\{A_3\sqrt c,1\}(A_3+1)
  \left(\frac{1 +\sqrt{5}}{2}\right),
\end{multline*}
which is true by the definition of $\gamma$.
}
% which, using the assumption $u_n \leq \sqrt c$ and $A_3+1 \geq 1$, is less restrictive than
% $$ \gamma > \frac{\max\{A_3\sqrt c,1\}(A_3+1) +\sqrt{\max\{A_3\sqrt c,1\}^2(A_3+1)^2 + 4\max\{A_3\sqrt c;1\}(A_3+1)^2}}{2}.$$
% The previous condition is exactly 
% $$ \gamma > \max\{A_3\sqrt c,1\}(A_3+1)\frac{1 +\sqrt{5}}{2}$$
% and is satisfied by the chosen $\gamma$ which proves $a_n \leq b$.
For $t \in [a_n,b]$, \textcolor{black}{observe} that
\begin{multline*}
{\color{black} -\frac{t^2nh_n^r}{A_3(1+t)}+A_3\log\left(2\frac{\vert \Theta_r \vert}{\varepsilon_n}\right) }
%& = -\frac{t^2nh_n^r}{A_3} \left( \frac{1}{t+1} - \frac{A_3^2 \log\left(2\frac{\vert \Theta_r \vert}{\varepsilon_n}\right)}{t^2 n h^r} \right)\\
= - \frac{t^2nh_n^r}{A_3} \left( \frac{1}{t+1} - \frac{a_n^2}{\gamma^2 t^2}\right)\\
 \leq  - \frac{t^2nh_n^r}{A_3} \left( \frac{1}{t+1} - \frac{1}{\gamma^2} \right) 
  \leq  - \frac{t^2nh_n^r}{A_3} \left( \frac{1}{b+1} - \frac{1}{\gamma^2} \right) {\color{black} = -t^2nh_n^r}.
%& =  - \frac{t^2nh_n^r}{A_3} \tilde A
\end{multline*}
%where $\tilde A = \frac{1}{b+1} - \frac{1}{\gamma^2} >0$.
Moreover, for $t>b$, 
\begin{multline*}
{\color{black} -\frac{t^2nh_n^r}{A_3(1+t)}+A_3\log\left(2\frac{\vert \Theta_r \vert}{\varepsilon_n}\right) }
%& = -\frac{tnh_n^r}{A_3} \left( \frac{t}{t+1} - \frac{A_3^2 \log\left(2\frac{\vert \Theta_r \vert}{\varepsilon_n}\right)}{t n h^r} \right)\\
 = - \frac{tnh_n^r}{A_3} \left( \frac{t}{t+1} - \frac{a_n^2}{\gamma^2 t}\right)\\
 \leq  - \frac{tnh_n^r}{A_3} \left( \frac{t}{t+1} - \frac{a_n}{\gamma^2} \right) 
  \leq  - \frac{tnh_n^r}{A_3} \left( \frac{b}{b+1} - \frac{a_n}{\gamma^2} \right)
   \leq  - \frac{tnh_n^r}{A_3} \left( \frac{b}{b+1} - \frac{a}{\gamma^2} \right),
\end{multline*}
where $a = A_3\gamma \sqrt c$ is a positive constant that satisfies $a_n \leq a$ for all $n\geq 3$.

Let $\tilde{A}=\frac{b}{b+1} - \frac{a}{\gamma^2}.$ \textcolor{black}{Let us first prove that $\tilde{A}$ is positive. It is enough to prove that
  $\gamma^2 -A_3\sqrt{c}\gamma -(A_3+1)>0$ which holds true if
  $$
  \gamma>\frac{A_3\sqrt{c}+\sqrt{A_3^2c+4(A_3+1)}}{2},
  $$
  which is satisfied if
  \begin{multline*}
    \gamma>\frac{\max(A_3\sqrt{c},1)(A_3+1)+\sqrt{\max(A_3\sqrt{c},1)^2(A_3+1)^2+4\max(A_3\sqrt{c},1)^2(A_3+1)^2}}{2}\\
    =\max\{A_3\sqrt c,1\}(A_3+1)\left(\frac{1 +\sqrt{5}}{2}\right),
\end{multline*}
which is true by the definition of $\gamma$.
}
%With the definition of $a$ and $b$, one can verify that $\tilde{A}>0$ by following the same steps as the proof of $a_n \leq b$.
%\begin{align*}
%\tilde A >0 & \iff \gamma^2 -(A_3+1) -a >0 \\
%& \iff \gamma > \frac{A_3\sqrt c +\sqrt{A_3^2c + 4(A_3+1)}}{2},
%\end{align*}
%which is less restrictive than
%$$ \gamma > \frac{\max\{A_3\sqrt c,1\} +\sqrt{\max\{A_3\sqrt c,1\}^2 + 4\max\{A_3\sqrt c;1\}(A_3+1)}}{2}.$$
%also less restrictive than 
%$$ \gamma > \frac{\max\{A_3\sqrt c,1\}(A_3+1) +\sqrt{\max\{A_3\sqrt c,1\}^2(A_3+1)^2+ 4\max\{A_3\sqrt c;1\}(A_3+1)^2}}{2}.$$
%The previous condition is exactly 
%$$ \gamma > \max\{A_3\sqrt c,1\}(A_3+1)\frac{1 +\sqrt{5}}{2}$$
%and is satisfied by the chosen $\gamma$ which proves $\tilde A >0$.
%
Hence, \eqref{integ_tsy} becomes 
\begin{align*}
 {\color{black} \left(2\frac{\vert \Theta_r \vert}{\varepsilon_n}\right)^r }& \int_{\textcolor{black}{a_n}}^{+\infty} t^3 \textcolor{black}{\sup_{z \in \mathcal{N}}} \mathbb{P}\left(\left|\frac{1}{n} h_n^{-r}\sum_{i=1}^n F_i^{(s,s')}(z)\right| > t \right)\mathrm{d}t \\
& \leq  %{\color{magenta} \frac{a^4}{4} }+ 
 {\color{black}2} \int_{\textcolor{black}{a_n}}^{b} t^3\exp \left( - t^2nh_n^r \right)dt
 + {\color{black}2} \int_b^{+\infty} t^3\exp \left( - \frac{tnh_n^r}{A_3} \tilde A\right)dt\\
 & \leq  %  {\color{magenta} \frac{a^4}{4} } + 
{\color{black}2}  \int_0^{b} t^3\exp \left( - t^2nh_n^r \right)dt
 +  {\color{black}2}\int_b^{+\infty} t^3\exp \left( - \frac{tnh_n^r}{A_3} \tilde A\right)dt.
%
%\int_0^{+\infty} t^3\exp \left( -\frac{t^2nh_n^r}{A_3(1+t)}+A_3\log\left(2\frac{\vert \Theta_r \vert}{\varepsilon_n}\right)\right)dt
%
\end{align*}
%{\color{magenta} [on est sur que c'est exactement $a^4/4$ sans constante ?]}}
%}
{\color{black} Note that, by defining $v_n = nh_n^r$, the first integral becomes
%\begin{equation}\label{eq:calcul_int_t2}
\[\int_0^{b}
t^3 \,
\mbox{exp}\left(-v_n t^2\right)
\mathrm{d}t = \frac{1}{2v_n^2} - \frac{1}{2v_n}\mbox{exp}(-v_n b^2)\left(b^2+\frac{1}{v_n}\right)\leq \frac{1}{2v_n^2}.
%\end{equation}
\]
To compute the second integral, denote $w_n =  \frac{\tilde A nh_n^r}{A_3} $ and observe that
\[
%\begin{equation}\label{eq:calcul_int_t}
 \int_{b}^{+\infty}
t^3 \,
\exp\left(-w_n t \right)
\mathrm{d}t = \frac{1}{w_n} \mbox{exp}(-w_n b)\left(b^3+\frac{3 b^2}{w_n}+\frac{6 b}{w_n^2}+\frac{6}{w_n^3}\right) %\leq \frac{K(b)}{v^2},
\leq  \frac{1}{w_n^2 b} \left(b^3+\frac{3 b^2}{w_n}+\frac{6 b}{w_n^2}+\frac{6}{w_n^3}\right) 
%\end{equation} 
\]
where the last inequality comes from the fact that  $x\exp(-x) \leq 1$ for all $x$ in $\mathbb{R}$.
Moreover, according to the definition of $w_n$ and the assumption that $nh_n^r \geq 1$, we have that $w_n \geq \frac{\tilde A}{A_3}$. Hence 
\[
 \int_{b}^{+\infty}
t^3 \,
\exp\left(-w_n t \right)
\mathrm{d}t \leq n^{-2}h_n^{-2}C(b),
\]
where $C(b) = A_3^2\tilde A^{-2} \left(b^2+{3bA_3}{\tilde A^{-1}} +{6A_3^2}{\tilde A^{-2}} +{6b^{-1}A_3^3}{\tilde A_3^{-3}}\right)>0$. Then 
\[
 \left(2\frac{\vert \Theta_r \vert}{\varepsilon_n}\right)^r \int_{a_n}^{+\infty} t^3  \mathbb{P}\left(\left|\frac{1}{n} h_n^{-r}\sum_{i=1}^n F_i^{(s,s')}(z)\right| > t \right)\mathrm{d}t 
 \leq % {\color{magenta} \frac{a^4}{4} }+ 
n^{-2}h_n^{-2r} (1+2C(b)).
\]
{\color{black} By the definition of $a_n$} and using the fact that $\log\left(2\frac{\vert \Theta_r \vert}{\varepsilon_n}\right) \geq \log(2)$ since $\varepsilon_n \leq \vert \Theta_r \vert$, we conclude the proof.
% \[
% \left(2\frac{\vert \Theta_r \vert}{\varepsilon_n}\right)^r \int_a^{+\infty} t^3  \mathbb{P}\left(\left|\frac{1}{n} h_n^{-r}\sum_{i=1}^n F_i^{(s,s')}(z)\right| > t \right)\mathrm{d}t 
%  \leq A n^{-2}h_n^{-2r} \log\left(2\frac{\vert \Theta_r \vert}{\varepsilon_n}\right)
% \]
% for a suitable constant $A>0$.
}

%where $K(\xi) >0$ is a constant depending only on $\xi$.
%The last inequality is due to the fact that $\mbox{exp}(-\gamma'\xi)\leq\frac{1}{\gamma'\xi}$ and that the condition $nh_n^r \geq 1$ implies $\frac{1}{\gamma'} \leq A'$.
%Using \eqref{eq:calcul_int_t2} and \eqref{eq:calcul_int_t} with $\xi = B_A$, we conclude that for $A_1$ large enough and $a_{A_1,n} = \sqrt{A_1}A_3u_n$ we have the bound
%\begin{equation*}
% \int_{a_{A_1,n}}^{+\infty}t^3 \exp\left(-\frac{t^2 n h_n^r}{A_3(1+t)} + A_3 \log\left(\frac{2\vert \Theta_r \vert}{\varepsilon_n}\right)\right) \mathrm{d}t \leq Cn^{-2}h_n^{-2r},
%\end{equation*}
%where $C>0$ is a constant.

\end{proof}

\begin{lemma}\label{lemma 21}
Let $\mathcal W$ be the Lambert function defined in \eqref{def: lambert functions} and let $\Theta_r$ be the \textcolor{black}{set} defined in \eqref{def: real theta_r}.
Under assumptions \eqref{ass: assumption K}, \eqref{ass: noise} and \eqref{ass: density}, if $nh_n^r \geq \max \left(\mathcal{W}(n),\lambda_n^{-2}\mathcal{W}(n)\right)$, we have
\begin{equation*}
 \mathbb{E}
\left(\sup_{z\in\Theta_r}
\bigl\| G_n(z) \bigr\|_{\mathrm{op}}^{4}
\right)
\leq An^{-2}h_n^{-2r}\mathcal{W}(n)^2,
\end{equation*}
where $G_n$ \textcolor{black}{is} defined \eqref{def: Gn} and $A$ is a positive constant. 
\end{lemma}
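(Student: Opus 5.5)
We follow the template of the proof of \eqref{eq:lemma20.1} in Lemma \ref{lemma 20}, with the entries of $B_{n,\lambda_n}$ replaced by the entries of $G_n$. Since $G_n(z)$ is a vector of size $S$, Jensen's inequality gives
\[
\mathbb{E}\Bigl(\sup_{z\in\Theta_r}\|G_n(z)\|^4\Bigr)\leq S^3\sum_{s=1}^S \mathbb{E}\Bigl(\sup_{z\in\Theta_r}\Bigl|n^{-1}h_n^{-r}\sum_{i=1}^n F_i^{(s)}(z)\Bigr|^4\Bigr),
\]
where $F_i^{(s)}$ is defined in \eqref{eq:def fis}. The $F_i^{(s)}(z)$ are already centered, because the $\xi_i$ are centered and independent of the $z_i$. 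We take the same $\varepsilon_n$ as in \eqref{epsilon_n} and the minimal $\varepsilon_n$-net $\mathcal N$ of $\Theta_r$. Splitting the supremum into a supremum over $\mathcal N$ plus an oscillation term over $\|z-y\|\leq\varepsilon_n$ then gives the analogue of \eqref{star}.

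\textbf{Net term.} For fixed $z$, \eqref{eq-lemma27-2} of Lemma \ref{lemma 27} provides a Bernstein tail of the form $2\exp(-t^2nh_n^r/(C(1+t)))$. The proof of Lemma \ref{lemma 26} used only this tail bound, the cardinality bound on $\mathcal N$, $\varepsilon_n\leq|\Theta_r|$ and $n^{-1}h_n^{-r}\log(2|\Theta_r|/\varepsilon_n)\leq c$. The last two facts were already checked in the proof of Lemma \ref{lemma 20}, using $nh_n^r\geq\mathcal W(n)$ and \eqref{eq:on W}. Rerunning Lemma \ref{lemma 26} verbatim with $F_i^{(s)}$ in place of $F_i^{(s,s')}$ therefore bounds the net term by $An^{-2}h_n^{-2r}\log(2|\Theta_r|/\varepsilon_n)^2$, which is at most $An^{-2}h_n^{-2r}\mathcal W(n)^2$ by \eqref{ineq_suppl}, \eqref{prop log} and \eqref{eq:on W}.

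\textbf{Oscillation term (main obstacle).} In Lemma \ref{lemma 20} the summands were deterministically Lipschitz with constant $\gamma_Kh_n^{-1}$. Here the summands carry the factor $\xi_i$, so by Lemma \ref{lem:lipschitz} (applied to $U_sK$)
\[
\Bigl|\sum_i\bigl(F_i^{(s)}(z)-F_i^{(s)}(y)\bigr)\Bigr|\leq \gamma_Kh_n^{-1}\|z-y\|\sum_{i=1}^n|\xi_i|.
\]
The oscillation term is thus bounded by
\[
\gamma_K^4h_n^{-4r-4}\varepsilon_n^4\,\mathbb{E}\Bigl(\Bigl(n^{-1}\sum_i|\xi_i|\Bigr)^4\Bigr)\leq A h_n^{-4r-4}\varepsilon_n^4,
\]
since \eqref{ass: noise} gives bounded fourth moments and Jensen's inequality applies. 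The key point is that independence of $\xi$ and $x$ lets the Lipschitz constant be random but with controlled moments. This is the step I would verify most carefully. If the Lipschitz lemma is stated only for $UU^\top K$, I would reprove it for the entries $U_sK$: on the compact support these are products of Lipschitz polynomials and $K$, scaled by $h_n^{-1}$. Exactly as at the end of the proof of \eqref{eq:lemma20.1}, $h_n^{-4r-4}\varepsilon_n^4=\tfrac14 n^{-2}h_n^{-2r}\mathcal W(8nh_n^{-r-2}|\Theta_r|^2)^2\leq An^{-2}h_n^{-2r}\mathcal W(n)^2$, again by \eqref{prop log} and \eqref{eq:on W}. Summing the two contributions over $s$ gives the claim.

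Two conditions remain to check. The hypothesis $h_n\leq1$ used in Lemma \ref{lemma 20} follows from $nh_n^r\geq\lambda_n^{-2}\mathcal W(n)$ together with the chosen scalings; otherwise it can be assumed as in Lemma \ref{lemma 20}. The condition $nh_n^r\geq1$ needed in Lemma \ref{lemma 26} follows from $\mathcal W(n)\geq1$.
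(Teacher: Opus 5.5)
Your proposal is correct and follows the paper's proof essentially step by step. The shared steps are the Jensen reduction to the coordinates $F_i^{(s)}$, the split into a supremum over the $\varepsilon_n$-net plus an oscillation term with the same $\varepsilon_n$, Lemma \ref{lemma 27} with an adaptation of Lemma \ref{lemma 26} for the net term, and a random Lipschitz constant proportional to $h_n^{-1}\sum_i|\xi_i|$ for the oscillation term; since $U_1\equiv 1$, $U_sK$ is exactly $\phi^{(1,s)}_i$, so Lemma \ref{lem:lipschitz} applies as stated. Your bound on $\mathbb{E}\bigl((n^{-1}\sum_i|\xi_i|)^4\bigr)$ is in fact the moment the analogue of \eqref{star} requires, whereas the paper only records the first-moment bound $\mathbb{E}(\sum_i|\xi_i|)\leq nA$.
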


\begin{proof}
For $i \in \{1,...,n\}$ and $s \in \{1,...,S\}$, \textcolor{black}{let us} define 
\begin{equation}\label{eq:def fis}
F^{(s)}_i : z \longmapsto U\left(\frac{z_i-z}{h_n}\right)_{s}K\left(\frac{z_i-z}{h_n}\right)\xi_i.
\end{equation}
where  \textcolor{black}{$U_s$ denotes the $s$th coordinate of $U$,} the $\xi_i$'s are introduced in \eqref{eq: regression model} and are centered and indenpendent of the $z_i$'s \textcolor{black}{by \eqref{ass: noise}}. Thus, $\mathbb{E}(G_n(z)) = 0$ for all $z\in \Theta_r$.
%so the inequality that need to be proved is similar to \eqref{eq:lemma20.1} of Lemma \ref{lemma 20}.
%In the same way as in the beginning of its proof, we can get the bound
\textcolor{black}{Following the lines of the proof of Lemma \ref{lemma 20}, we get}
\begin{align*}
\mathbb{E}\!\left[
\sup_{z \in \Theta_r} \| G_{n,\lambda_n}(z)\|_{\mathrm{op}}^{4}
\right]
&\leq S^3 \sum_{s=1}^{S}
\mathbb{E}\!\left[
\sup_{z \in \Theta_r}
\left|
n^{-1} h_n^{-r}
\sum_{i=1}^{n} F_i^{(s)}(z)
\right|^{4}
\right].
\end{align*}
% which is similar to the bound obtained for  $\mathbb{E}\!\left[
% \sup_{z \in \Theta_r} \| B_{n,\lambda_n}(z)-\mathbb{E}(B_{n,\lambda_n}(z)) \|_{\mathrm{op}}^{4}
% \right]$. Then, the rest of the proof is analogous but replacing the functions $F_i^{(s,s')}$'s defined in \eqref{eq: def fiss} by the $F_i^{(s)}$'s. To prove the needed Lipschitz property of the $F_i^{(s)}$'s, we use that each $F_i^{(s)}$ has a Lipschitz constant proportionnal to $\vert \xi_i\vert$ so the inegality \eqref{eq: Fiss lipschitz} becomes
\textcolor{black}{Moreover,
$$\left \vert \sum_{i=1}^n \left ( F_i^{(s)}(x)-F_i^{(s)}(y) \right) \right \vert \leq  A h_n^{-1}\Vert x-y\Vert\sum_{i=1}^n \vert \xi_i\vert,
$$
where, for $i\in \{1,...,n\}$,
\begin{align*} \mathbb{E}\left (\vert \xi_i \vert \right) &=  \int_0^{\infty}\mathbb{P}\left(\vert\xi_i\vert > t\right)dt =  \int_0^{\infty}\mathbb{P}\left(e^{\vert\xi_i\vert/\sigma} > e^{t/\sigma}\right)dt  =  \sigma\int_1^{\infty}\frac{1}{u}\mathbb{P}\left(e^{\vert\xi_i\vert/\sigma} > u\right)du\\
& \leq  \sigma\int_1^{\infty}\mathbb{P}\left(e^{\vert\xi_i\vert/\sigma} > u\right)du
\leq  \sigma\mathbb{E}\left(e^{\vert\xi_i\vert/\sigma}\right)\leq A,
\end{align*}
the last inequality coming from \eqref{ass: noise} {\color{black} and} $A$ being a positive constant that does not depend on $i$.
Thus, 
$\mathbb{E}(\sum_{i=1}^n \vert \xi_i \vert) \leq nA,$
which leads to a an inequality similar to \eqref{star}. The rest of the proof \textcolor{black}{comes from Lemma \ref{lemma 27} and an adaptation of Lemma \ref{lemma 26} to the $F_i^{(s)}$'s.}}

%\textcolor{black}{C'est complètement évident que toute la preuve va bien marcher ? Il y a toute l'histoire sur le choix de $\varepsilon_n$......}

%$\forall t > 0,$\[ \mathbb{P}\left(\left|\frac{1}{n}h_n^{-r}\sum_{i=1}^n H_i^{(s)} \right| > t \right) \leq 2 \exp\left(-\frac{{\color{magenta}t^2n}h_n^{-r}}{A(1+t)}\right)\] for a suitable positive constant $A$.
\end{proof}

\begin{lemma}\label{lemma 23}
Let $\mathcal W$ be the Lambert function defined in \eqref{def: lambert functions} and let $\Theta_r$ be the \textcolor{black}{set} defined in \eqref{def: real theta_r}.
{\color{black} Under \eqref{ass: assumption K}, \eqref{ass: min g}, \eqref{ass: regularity f} and \eqref{ass: density},} if $nh_n^r \geq \max \left(\mathcal{W}(n),\lambda_n^{-2}\mathcal{W}(n)\right)$,
\begin{equation*}
 \mathbb{E}
\left(\sup_{z\in \Theta_r} 
\bigl\| C_{n}(z) - \mathbb{E}(C_{n}(z)) \bigr\|_{\mathrm{op}}^{4}
\right)
\leq Ah_n^{-2r}n^{-2}\mathcal{W}(n)^2, 
\end{equation*}
where $C_n$ is defined in \eqref{def: Cn} and $A$ is a positive constant.
\end{lemma}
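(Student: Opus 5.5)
We will mimic the proof of \eqref{eq:lemma20.1} in Lemma \ref{lemma 20}, with the functions $\phi_i^{(s,s')}$ replaced by
\[
\psi_i^{(s)}:z\longmapsto U\left(\frac{z_i-z}{h_n}\right)_s K\left(\frac{z_i-z}{h_n}\right)g(z_i), \qquad H_i^{(s)}=\psi_i^{(s)}-\mathbb{E}\,\psi_i^{(s)},
\]
so that $H_i^{(s)}$ coincides with \eqref{def his}. The first step is to reduce from the operator norm to coordinates. Since $C_n(z)-\mathbb{E}(C_n(z))=n^{-1}h_n^{-r}\sum_i (H_i^{(s)}(z))_{1\le s\le S}$ is a vector of size $S$, Jensen's inequality gives
\[
\mathbb{E}\Big(\sup_{z\in\Theta_r}\|C_n(z)-\mathbb{E}C_n(z)\|_{\rm op}^4\Big)\le S^3\sum_{s=1}^S\mathbb{E}\Big(\sup_{z\in\Theta_r}\Big|n^{-1}h_n^{-r}\sum_{i=1}^n H_i^{(s)}(z)\Big|^4\Big).
\]

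The second step is a Lipschitz bound. By \eqref{ass: bounded} of \eqref{ass: regularity f}, $g$ is bounded on $W\Theta'$. The kernel $K$ is supported in the unit ball, so only points $z_i$ with $\|z_i-z\|\le h_n\le 1$ contribute, and all of them lie in $W\Theta'$. Lemma \ref{lem:lipschitz} (the argument used for the $\phi_i^{(s,s')}$) then shows that each $\psi_i^{(s)}$ is Lipschitz with constant $A h_n^{-1}$. Hence
\[
\Big|\sum_i\big(H_i^{(s)}(z)-H_i^{(s)}(y)\big)\Big|\le 2Anh_n^{-1}\|z-y\|,
\]
which is the analogue of \eqref{eq: Fiss lipschitz}.

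The third step is discretization. We take the same $\varepsilon_n$ as in \eqref{epsilon_n} and its minimal $\varepsilon_n$-net $\mathcal N$, and obtain the analogue of \eqref{star}:
\[
\text{bound} \le A\Big[\sum_s\mathbb{E}\Big(\sup_{z\in\mathcal N}\Big|n^{-1}h_n^{-r}\sum_i H_i^{(s)}(z)\Big|^4\Big)+h_n^{-4r-4}\varepsilon_n^4\Big].
\]
The verification that $\varepsilon_n\le|\Theta_r|$ and that \eqref{claim} holds is identical to the one in Lemma \ref{lemma 20}, since it uses only $nh_n^r\ge\mathcal W(n)$, $h_n\le1$ and Lemma \ref{lemma: proporties lambert function}. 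Here $h_n\le 1$ follows from $nh_n^r\ge\lambda_n^{-2}\mathcal W(n)$ in the regimes considered; if needed, we simply add it as in Lemma \ref{lemma 20}.

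The fourth step controls the net term, via an adaptation of Lemma \ref{lemma 26} with $F_i^{(s,s')}$ replaced by $H_i^{(s)}$. Its proof used only the union bound, the cardinality of $\mathcal N$, and the Bernstein-type tail $2\exp(-t^2nh_n^r/(C(1+t)))$. That tail is available for $H_i^{(s)}$ through \eqref{eq-lemma27-1} of Lemma \ref{lemma 27}, so we obtain the bound $An^{-2}h_n^{-2r}\log(2|\Theta_r|/\varepsilon_n)^2$.

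Finally, we conclude exactly as at the end of the proof of \eqref{eq:lemma20.1}. By the identity \eqref{ineq_suppl}, both terms are of order $h_n^{-4r-4}\varepsilon_n^4=\frac14 n^{-2}h_n^{-2r}\mathcal W(8nh_n^{-r-2}|\Theta_r|^2)^2$. Then \eqref{prop log}, \eqref{eq:W_croissante} and \eqref{eq:on W} bound this by $An^{-2}h_n^{-2r}\mathcal W(n)^2$.

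The main obstacle is the Lipschitz step. The factor $g(z_i)$ is harmless there because it does not depend on $z$. The real care is needed in bounding $g(z_i)$ uniformly: this requires checking that the kernel support forces $z_i\in W\Theta'$, and it relies on \eqref{ass: bounded} rather than on a global bound on $g$.
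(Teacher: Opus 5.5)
Your proposal is correct and follows essentially the same route as the paper. The steps match: the $S^3\sum_s$ coordinate reduction, a deterministic Lipschitz bound for the $H_i^{(s)}$ using boundedness of $g$, the $\varepsilon_n$-net discretization of Lemma \ref{lemma 20}, the tail \eqref{eq-lemma27-1} fed into an adapted Lemma \ref{lemma 26}, and the same Lambert-function bookkeeping at the end. Your bound on $g(z_i)$ through the kernel support and \eqref{ass: bounded} on $W\Theta'$ is more careful than the paper's appeal to boundedness of $g$ on $\Theta_r$. One correction: $h_n\le 1$ does not follow from $nh_n^r\ge\lambda_n^{-2}\mathcal W(n)$, so it must be added as a hypothesis, as the paper implicitly does; it holds for the choice \eqref{def: values hn et lambdan} because $\mathcal W(n)\le n$.
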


\begin{proof}
For $i \in \{1,...,n\}$ and $s \in \{1,...,S\}$, \textcolor{black}{let us define} 
\begin{equation}\label{def his}
H_i^{(s)}(z)
=
U\!\left(\frac{z_i - z}{h_n}\right)
_{s}
K\!\left(\frac{z_i - z}{h_n}\right)g(z_i)
-
\mathbb{E}
\left[
U\!\left(\frac{z_i - z}{h_n}\right)
_{s}
K\!\left(\frac{z_i - z}{h_n}\right)g(z_i)
\right],
\end{equation}
where  $g$ is introduced in \eqref{def: g} \textcolor{black}{and $U_s$ denotes the $s$th coordinate of the vector $U$.}
\textcolor{black}{Similarly to the beginning of the proof of Lemma \ref{lemma 20}, we get}
\begin{align*}
\mathbb{E}\!\left[
\sup_{z \in \Theta_r} \| C_{n,\lambda_n}(z)-\mathbb{E}(C_{n,\lambda_n}(z)) \|_{\mathrm{op}}^{4}
\right]
&\leq S^3 \sum_{s=1}^{S}
\mathbb{E}\!\left[
\sup_{z \in \Theta_r}
\left|
n^{-1} h_n^{-r}
\sum_{i=1}^{n} H_i^{(s)}(z)
\right|^{4}
\right].
\end{align*}
%which is similar to the bound obtained for  $\mathbb{E}\!\left[
%\sup_{z \in \Theta_r} \| B_{n,\lambda_n}(z)-\mathbb{E}(B_{n,\lambda_n}(z)) \|_{\mathrm{op}}^{4}
% \right]$.
\textcolor{black}{To prove the Lipschitz property of the $H_i^{(s)}$, we use that the function $g$ is bounded on the compact $\Theta_r$ and the rest of the proof is similar.}
%Thus, this proof is analogous to the proof of \eqref{eq:lemma20.1} of Lemma \ref{lemma 20} but replacing the functions $F_i^{(s,s')}$'s defined in \eqref{eq: def fiss} by the $H_i^{(s)}$'s and using the fact that the function $g$ is bounded on the compact $\Theta_r$ to prove the wanted Lipschitz property of the $H_i^{(s)}$'s. Both Lemma \ref{lemma 27} and Lemma \ref{lemma 26}, needed in this proof, can be easily adapted for the $H_i^{(s)}$'s instead of the $F_i^{(s,s')}$'s.
%\textcolor{black}{{\color{salmon} \sout{The rest of the proof comes from}}
\textcolor{black}{Lemma \ref{lemma 27} and an adaptation of Lemma \ref{lemma 26} to the $H_i^{(s)}$'s} {\color{black} conclude the proof.}
%\textcolor{black}{C'est complètement évident que toute la preuve va bien marcher ? Il y a toute l'histoire sur le choix de $\varepsilon_n$......}

%$\forall t > 0,$\[ \mathbb{P}\left(\left|\frac{1}{n}h_n^{-r}\sum_{i=1}^n H_i^{(s)} \right| > t \right) \leq 2 \exp\left(-\frac{{\color{magenta}t^2n}h_n^{-r}}{A(1+t)}\right)\] for a suitable positive constant $A$.
\end{proof}

\textcolor{black}{For the reader convenience}, the Bernstein's inequality used in the proofs is recalled in the following Lemma.
\begin{lemma}\label{lemma: Bernstein}
Let $\{\zeta_i\}_{i=1}^k$ be a collection of real-valued independent random variables with zero means such that, for any $i=1,\ldots,k$ and any $q\in\{2,3,\ldots\}$, the Bernstein condition
\[
\mathbb{E}\left[|\zeta_i|^q\right]
\le \frac{q!}{2}\,\nu H^{q-2}
\]
is satisfied with some $\nu>0$, $H>0$. Then, for any $t>0$,
\[
\mathbb{P}\left(
\left|
\frac{1}{k}\sum_{i=1}^k \zeta_i
\right|
> t
\right)
\le
2\exp\left(
-\frac{t^2k}{2(\nu+Ht)}
\right).
\]
\end{lemma}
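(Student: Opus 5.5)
The statement is the classical Bernstein inequality for independent, centered variables satisfying the moment condition. I would prove it by the Chernoff exponential-moment method, handling each tail separately and then taking a union bound, which is where the factor $2$ comes from.

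\emph{Step 1: bound the moment generating function.} For $0<\mu<1/H$ and each $i$, expand the exponential. Since $\mathbb{E}\zeta_i=0$, the Bernstein condition gives
\[
\mathbb{E}\bigl[e^{\mu\zeta_i}\bigr]\le 1+\sum_{q\ge2}\frac{\mu^q\,\mathbb{E}|\zeta_i|^q}{q!}\le 1+\frac{\nu\mu^2}{2}\sum_{q\ge2}(\mu H)^{q-2}=1+\frac{\nu\mu^2}{2(1-\mu H)}.
\]
Using $1+x\le e^x$, this yields $\mathbb{E}[e^{\mu\zeta_i}]\le\exp\bigl(\nu\mu^2/(2(1-\mu H))\bigr)$. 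The interchange of sum and expectation is justified by monotone convergence applied to $e^{\mu|\zeta_i|}$, which is finite under the condition.

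\emph{Step 2: Markov's inequality and independence.} For the upper tail,
\[
\mathbb{P}\Bigl(\sum_{i=1}^k\zeta_i>kt\Bigr)\le e^{-\mu kt}\prod_{i=1}^k\mathbb{E}\bigl[e^{\mu\zeta_i}\bigr]\le\exp\Bigl(-\mu kt+\frac{k\nu\mu^2}{2(1-\mu H)}\Bigr).
\]

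\emph{Step 3: choose $\mu$.} This is the only delicate point. Take $\mu=t/(\nu+Ht)$, which automatically satisfies $\mu H<1$. Then $1-\mu H=\nu/(\nu+Ht)$, so the quadratic term equals $k\mu t/2$. The exponent therefore becomes
\[
-\frac{k\mu t}{2}=-\frac{kt^2}{2(\nu+Ht)}.
\]

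\emph{Step 4: symmetrize.} The variables $-\zeta_i$ are independent, centered, and satisfy the same moment condition, so the lower tail obeys the same bound. Summing the two tails gives the factor $2$ and the stated inequality. I expect no real obstacle beyond verifying the choice of $\mu$ in Step 3. The result is also standard, for example Boucheron--Lugosi--Massart, and could simply be cited.
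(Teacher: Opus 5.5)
Your proof is correct. It is the standard Chernoff-bound derivation of Bernstein's inequality, and the paper gives no proof of its own: it only recalls the result as standard. Your choice $\mu=t/(\nu+Ht)$ makes $\frac{\nu\mu^2}{2(1-\mu H)}=\frac{\mu t}{2}$, so the exponent collapses exactly to $-kt^2/(2(\nu+Ht))$, and nothing is missing.
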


\textcolor{black}{
\begin{lemma}\label{lem:lipschitz}
Under the assumptions of Lemma \ref{lemma 20} the functions $\phi^{(s,s')}_i$ defined in \eqref{eq:def_phi} are Lipschitz with constant $\gamma_K h_n^{-1}$ where $\gamma_K$ is a constant depending on $L_K$ and $\Theta_r$.
\end{lemma}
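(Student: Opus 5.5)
The map $z\mapsto \phi^{(s,s')}_i(z)$ is a product of a polynomial part and a kernel part, both evaluated at $u_i(z)=(z_i-z)/h_n$. Write
\[
\phi^{(s,s')}_i(z)=P_{s,s'}\bigl(u_i(z)\bigr)\,K\bigl(u_i(z)\bigr),\qquad P_{s,s'}(u)=\frac{u^{\m^{(s)}}}{\m^{(s)}!}\,\frac{u^{\m^{(s')}}}{\m^{(s')}!},
\]
and observe that $u\mapsto u_i(z)$ is $h_n^{-1}$-Lipschitz in $z$. So it suffices to show that $\psi=P_{s,s'}K$ is Lipschitz on $\mathbb{R}^r$ with a constant $\gamma_K$ independent of $n$ and $i$. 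The Lipschitz constant of $\phi^{(s,s')}_i$ is then at most $\gamma_K h_n^{-1}$.

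The naive difficulty is that $P_{s,s'}$ is a polynomial, hence not globally Lipschitz, and the factor $1/h_n$ inside $u_i$ seems to make arguments large. The fix is the compact support of $K$ from \eqref{ass: assumption K}. Since $K$ vanishes outside the closed unit Euclidean ball $\bar B$, so does $\psi$.

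For $u,v\in\bar B$, I would split
\[
|\psi(u)-\psi(v)|\le |P_{s,s'}(u)-P_{s,s'}(v)|\,K(u)+|P_{s,s'}(v)|\,|K(u)-K(v)|.
\]
On the compact convex set $\bar B$, $P_{s,s'}$ is a polynomial of degree at most $2\ell$ with coefficients bounded by $1$. Hence it is bounded by some $c_P$ and, by the mean value theorem with $\sup_{\bar B}\|\nabla P_{s,s'}\|\le c'_P$, it is $c'_P$-Lipschitz there. This gives $|\psi(u)-\psi(v)|\le (c'_P\sup K+c_PL_K)\|u-v\|$.

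If $u\in\bar B$ and $v\notin\bar B$, take the point $w$ where the segment $[u,v]$ exits $\bar B$. Then $\psi(w)=0=\psi(v)$ by continuity of $K$, which follows from the Lipschitz property, and $\|u-w\|\le\|u-v\|$, so the previous case applies. If both points lie outside $\bar B$, the difference is $0$. Thus $\gamma_K=\max_{s,s'}(c'_P\sup K+c_PL_K)$ works and depends only on $L_K$, $\sup K$, $\ell$ and $r$.

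The statement also mentions $\Theta_r$. Only this pointwise bound is needed, uniformly in $z\in\Theta_r$ and in the random $z_i$, since $z_i$ enters through a translation. I expect the one delicate point to be the boundary case above, namely making sure no growth of the polynomial outside the support leaks into the constant. The segment-exit argument handles it. Everything else is routine compactness.
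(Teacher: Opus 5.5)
Your proof is correct and follows essentially the same route as the paper: write $\phi^{(s,s')}_i=\mathcal{U}\mathcal{K}$ and apply the product-rule split on the support of the kernel, where the polynomial factor is bounded and Lipschitz, then treat separately the cases where one or both points lie outside the support. The differences are cosmetic. You factor out the rescaling $z\mapsto (z_i-z)/h_n$ first, which makes explicit that $\gamma_K$ depends on none of $z_i$, $n$ or $\Theta_r$. You also handle the mixed case with a segment-exit point, whereas the paper simply uses $\mathcal{K}(z')=0$ to write the difference as $\mathcal{U}(z)\bigl(\mathcal{K}(z)-\mathcal{K}(z')\bigr)$ and bounds it directly.
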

\begin{proof}
 To alleviate the notations, we rewrite %{\color{salmon} \sout{$\phi^{(s,s')}_i$ as follows:  }}
\[\phi^{(s,s')}_i(z)=\mathcal{U}(z)\mathcal{K}(z)\] with
  $\mathcal{U}(z)=\left(U\left(\frac{z_i-z}{h_n}\right)U\left(\frac{z_i-z}{h_n}\right)^{\!\top}\right)_{s,s'}$
  and $\mathcal{K}(z)=K\left(\frac{z_i-z}{h_n}\right)$ {\color{black} and where,} by \eqref{ass: assumption K}, $\mathcal{K}$ has a compact support depending on $z_i$. Thus for $z$ and $z'$ in the compact support of $\mathcal{K}$, we have
  \begin{align*}
{\color{black} \phi^{(s,s')}_i(z) - \phi^{(s,s')}_i(z')=\mathcal{U}(z)\mathcal{K}(z)-\mathcal{U}(z')\mathcal{K}(z')=\left(\mathcal{U}(z)-\mathcal{U}(z')\right)\mathcal{K}(z)
    +\mathcal{U}(z')\left(\mathcal{K}(z)-\mathcal{K}(z')\right),}
  \end{align*}
  which gives the result using that $K$ is $L_K$-Lipschitz by \eqref{ass: assumption K} and the fact that $\mathcal{U}$ is a polynomial function and thus bounded and Lipschitz on a compact support. If neither $z$ nor $z'$ are in the support of $\mathcal{K}$, the difference is equal to zero. If $z$ is in the support of $\mathcal{K}$ but not $z'$, then
  $$
  \mathcal{U}(z)\mathcal{K}(z)-\mathcal{U}(z')\mathcal{K}(z')=\mathcal{U}(z)\mathcal{K}(z)=\mathcal{U}(z)(\mathcal{K}(z)-\mathcal{K}(z')),
  $$
  which gives the result using that $K$ is $L_K$-Lipschitz.
\end{proof}
}

\end{document}